\documentclass[a4paper,12pt,reqno,oneside]{amsart}

\usepackage[english]{babel} 
\usepackage[utf8]{inputenc}  
\usepackage[T1]{fontenc} 
\usepackage{amssymb, mathtools, bbm}
\usepackage{geometry}

 \usepackage[most]{tcolorbox}

\theoremstyle{plain}
\newtheorem{theorem}{Theorem}[section]
\newtheorem*{theorem*}{Theorem}
\newtheorem{lemma}[theorem]{Lemma}
\newtheorem{prop}[theorem]{Proposition}
\newtheorem{corollary}[theorem]{Corollary}

\newtheorem*{definition*}{Definition}
\newtheorem{claim}{Claim}
\newtheorem*{claim*}{Claim}
\newtheorem{fact}[theorem]{Fact}
\newtheorem{question}[theorem]{Question}
\newtheorem*{mthm}{Main Theorem}
\newtheorem{rmk}[theorem]{Remark}
\newtheorem*{subclaim*}{Subclaim}

\newcommand{\cof}{\mathrm{cf}}
\newcommand{\otp}{\mathrm{ot}}
\newcommand{\mrm}{\mathrm}
\newcommand{\mbb}{\mathbb}
\newcommand{\mcal}{\mathcal}
\newcommand{\dom}{\mathrm{dom}}
\newcommand{\ltpt}{\mathrm{Lim}}

\title[On $2$-stationarity of $\mcal{P}_{\kappa}\lambda$]{On $2$-stationarity of $\mcal{P}_{\kappa}\lambda$}
\author{Hiroshi Sakai}
\address{Graduate School of Mathematical Sciences, University of Tokyo. 3-8-1 Komaba Meguro-ku Tokyo 153-8914, Japan.}
\email{hsakai@ms.u-tokyo.ac.jp}

\author{M.~Catalina Torres}
\address{Departament de Matemàtiques i Informàtica,
Universitat de Barcelona,
Gran Via de les Corts Catalanes 585,
08007 Barcelona, Spain. / Institute of Discrete Mathematics and Geometry,
TU Wien,
Wiedner Hauptstraße 8--10,
1040 Vienna, Austria.}
\email{mactorrespa@gmail.com}

\subjclass{03E55; 03E35}
\keywords{Higher Stationary Set, Strongly Compact Cardinal}

\begin{document}

%--------------------------------------------------------------------------------------------------------------------------------------------------------------------------------------------------------------------------------------------------------

\begin{abstract}
The notion of $n$-stationary subsets of $\mcal{P}_\kappa \lambda$ for $n < \omega$ were introduced and  studied by Cody, Lambie-Hanson \& Zhang \cite{CLHZ} and Torres \cite{T}. They proved that if $\kappa$ is supercompact, then $\mcal{P}_\kappa \lambda$ is $n$-stationary in itself for all cardinals $\lambda \geq \kappa$ and all $n < \omega$. In this paper we prove that strong compactness of $\kappa$ does not imply the $2$-stationarity of $\mcal{P}_\kappa \lambda$ for cardinals $\lambda > \kappa$. We also discuss Menas' Theorem for $1$-stationary and $2$-stationary subsets of $\mcal{P}_\kappa \lambda$.
\end{abstract}

%--------------------------------------------------------------------------------------------------------------------------------------------------------------------------------------------------------------------------------------------------------

\maketitle

%--------------------------------------------------------------------------------------------------------------------------------------------------------------------------------------------------------------------------------------------------------
\section{Introduction} \label{intro}
%--------------------------------------------------------------------------------------------------------------------------------------------------------------------------------------------------------------------------------------------------------
The notion of stationary sets is one of the most fundamental notions in Set Theory and its study has a long history. The reflection of stationary sets has also been discussed in the context of large cardinals and has been studied extensively by many set theorists. For example, see Jech \cite{Je, J}.

More recently, the notion of $n$-stationary sets of ordinals, for $n < \omega$, has arisen  naturally in other areas of Mathematical Logic, such as Topological Semantics of Modal Logic and Ordinal Analysis. See Beklemishev \& Gabelaia \cite{BD} and Arai \cite{Arai}. This notion is defined iterating the notion of reflection. We recall it here.

By recursion on $n < \omega$, the notion of $n$-stationary subsets of a limit ordinal is defined as follows.
\begin{itemize}
\item A set $S \subseteq \alpha$ is $0$-\emph{stationary} in $\alpha$ if $S$ is unbounded in $\alpha$.
\item For $n > 0$, a set $S \subseteq \alpha$ is $n$-\emph{stationary} in $\alpha$ if for any $m<n$ and any $m$-stationary $T \subseteq \alpha$, there is a limit ordinal $\beta \in S$ such that $T \cap \beta$ is $m$-stationary in $\beta$.
\end{itemize}

The notion of $1$-stationary sets on ordinals, coincides with that of stationary sets. Moreover, a regular uncountable cardinal $\kappa$ is $2$-stationary in itself if and only if the stationary reflection principle holds at $\kappa$.

Bagaria \cite{B} initiated a systematic set-theoretic study of the notion of higher stationary sets. He investigated basic properties of $n$-stationary sets and generalized the notion of $n$-stationary sets to that of $\xi$-stationary sets for any infinite ordinal $\xi$. He proved that if $\kappa$ is a $\Pi^1_n$-indescribable cardinal, then $\kappa$ is $n+1$-stationary in itself. Soon after,  Bagaria, Magidor \& Sakai \cite{BMS} proved that the converse is also true in the constructible universe $L$. On the other hand, Bagaria, Magidor \& Mancilla \cite{BMM} showed that the consistency strength of the existence of an $n+1$-stationary cardinal is much weaker than that of the existence of a $\Pi^1_n$-indescribable cardinal.

Cody, Lambie-Hanson \& Zhang \cite{CLHZ} and Torres \cite{T}, generalized the notion of $n$-stationary subsets of ordinals to subsets of $\mcal{P}_\kappa A$, for $\kappa$ weakly inaccessible cardinal, $A \supseteq \kappa$, and $\mcal{P}_\kappa A = \{ x \subseteq A : |x| < \kappa \}$. The notion of $n$-stationary subsets of $\mcal{P}_\kappa A$ is then defined by recursion on $n < \omega$ as follows.
\begin{itemize}
\item A set $S\subseteq \mcal{P}_{\kappa}{A}$ is $0$-\emph{stationary} in $\mcal{P}_{\kappa}{A}$ if $S$ is unbounded in $\mcal{P}_{\kappa}{A}$, that is, $S$ is $\subseteq$-cofinal in $\mcal{P}_\kappa A$.
\item For $n > 0$, a set $S\subseteq \mcal{P}_{\kappa}{A}$ is $n$-\emph{stationary} in $\mcal{P}_{\kappa}{A}$ if for all $m<n$ and all $m$-stationary sets $T \subseteq \mcal{P}_{\kappa}{A}$, there is $x \in S$ such that $\mu := x \cap \kappa$ is a weakly inaccessible cardinal and $T\cap \mcal{P}_{\mu}{x}$ is $m$-stationary in $\mcal{P}_{\mu}{x}$.
\end{itemize}

A systematic study of this generalized notion was also started in \cite{CLHZ, T}. For example, it was proved that for any Mahlo cardinal $\kappa$ and any cardinal $\lambda \geq \kappa$, $S \subseteq \mcal{P}_\kappa \lambda$ is $1$-stationary if and only if it is strongly stationary. It was also shown that if $\kappa$ is $\lambda$-supercompact, then $\mcal{P}_\kappa \lambda$ is $n$-stationary in itself for all $n < \omega$.

The following question then naturally arises: Does the $\lambda$-strong compactness of $\kappa$ imply the $n$-stationarity of $\mcal{P}_\kappa \lambda$? In this paper, we answer this question negatively. In fact, we prove the consistency of the existence of a strongly compact cardinal $\kappa$ such that $\mcal{P}_\kappa \lambda$ is not $2$-stationary for any cardinal $\lambda > \kappa$. More precisely, we prove the following.

\begin{mthm}
Assume GCH holds, $\kappa$ is a supercompact cardinal, and there is no measurable cardinal $> \kappa$. Then, there is a forcing extension in which
\begin{itemize}
\item[(I)] $\kappa$ is strongly compact,
\item[(II)] $\mcal{P}_\kappa \lambda$ is not $2$-stationary in itself for any cardinal $\lambda > \kappa$.
\end{itemize}
\end{mthm}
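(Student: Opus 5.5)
The plan is to use a Magidor-style iteration that kills measurability below $\kappa$ while keeping $\kappa$ strongly compact. I will then show that killing all measurables below $\kappa$, together with the absence of measurables above $\kappa$, destroys $2$-stationarity.

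The first step is a reduction. Suppose $\mcal{P}_\kappa\lambda$ is $2$-stationary and $T\subseteq\mcal{P}_\kappa\lambda$ is $1$-stationary. Then there are many $x$ with $\mu=x\cap\kappa$ weakly inaccessible and $T\cap\mcal{P}_\mu x$ $1$-stationary in $\mcal{P}_\mu x$. I want to show that this kind of reflection of $1$-stationary sets forces some measurability. The known characterization says that for Mahlo $\mu$, $1$-stationary means strongly stationary. So the reflection asks for many $x$ at which a given strongly stationary set reflects as a strongly stationary set. I will therefore aim at a combinatorial failure: build a $1$-stationary $T$ which never reflects. For example, let $T$ be the set of $y$ such that $\mathrm{ot}(y)$, or $y\cap\kappa$, has some property incompatible with how $x$ sits above $y$.

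Concretely, I will use Magidor's iteration $\mathbb{P}$ of length $\kappa$. At each measurable $\alpha<\kappa$ it adds a nonreflecting stationary set, or Prikry-type objects, so that in $V[G]$ there are no measurable cardinals below $\kappa$. Magidor's argument (via the supercompactness embedding and the forcing being "iterated Prikry") gives (I). For (II), in $V[G]$ no $\mu<\kappa$ is measurable, and no $\mu>\kappa$ is either, since the forcing is small relative to them and GCH holds.

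The key step, and the main obstacle, is to show that $2$-stationarity of $\mcal{P}_\kappa\lambda$ for $\lambda>\kappa$ requires that for stationarily many relevant $x$, the cardinal $\mu=x\cap\kappa$ carries something like a fine normal measure on $\mcal{P}_\mu x$. I would try to construct a $1$-stationary $T$ that reflects only at points with strong closure. Take $T=\{y:\mathrm{ot}(y)\text{ is a limit of }y\cap\kappa\text{ with }|y|\ge y\cap\kappa^+\ldots\}$, i.e., exploit $\lambda>\kappa$ by using $y$ with $\mathrm{ot}(y)>(y\cap\kappa)^+$. Then $T\cap\mcal{P}_\mu x$ being $1$-stationary with $|x|>\mu$ would, through a splitting/ideal argument, require $\mu$ to be $|x|$-strongly-compact-like, hence measurable. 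That contradicts the absence of measurables below $\kappa$ in $V[G]$. If that argument fails, I would instead fix the nonreflecting stationary sets added by the iteration at each inaccessible $\mu$ and lift them to $\mcal{P}_\mu x$ to produce a $1$-stationary $T$ that cannot reflect.

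Verifying that this $T$ is $1$-stationary, which needs the theorem that strongly stationary is equivalent to $1$-stationary at Mahlo $\kappa$, is the delicate part.
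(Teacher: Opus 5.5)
There is a genuine gap, and it sits at the step you yourself flag as the main obstacle. Your argument for (II) rests on the claim that if a $1$-stationary set reflects at $x$ with $|x|>\mu=x\cap\kappa$, then $\mu$ must be measurable. From this you conclude that Magidor's model, which has no measurables below $\kappa$, automatically kills $2$-stationarity. You give no argument for this, and the local implication it relies on is false in general.

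By Fact~\ref{fact:1-stat}, $1$-stationarity of $T\cap\mcal{P}_\mu x$ only asks that $\mu$ be weakly Mahlo and that $T\cap\mcal{P}_\mu x$ be strongly stationary. No $\mu$-complete ultrafilter can be extracted from that. For instance, take $T=S(\kappa,\kappa^+)$ and $x\in T$. Via the collapse of $x$ onto $\mu^+$, the trace $T\cap\mcal{P}_\mu x$ is a copy of $S(\mu,\mu^+)$, and $S(\mu,\mu^+)$ can be $1$-stationary with $\mu$ not even weakly compact:
\begin{itemize}
\item Start with a Laver-indestructible $\mu$ and add a nonreflecting stationary $A\subseteq\mu$.
\item Shooting a club through $\mu\setminus A$ makes the two-step forcing equivalent to $\mathrm{Add}(\mu,1)$. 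So $\mu$ is still supercompact at the end, and $S(\mu,\mu^+)$ is $1$-stationary there by Lemma~\ref{lem:S_stat_spcmpct}.
\item This descends to the intermediate model, where $\mu$ is not weakly compact, because the club shooting adds no elements of $\mcal{P}_\mu\mu^+$.
\end{itemize}
So destroying measurability alone gives no reason for reflection to fail.

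Your concrete candidate fails too. For $\lambda=\kappa^+$, the set of $y$ with $\otp(y)>(y\cap\kappa)^+$ is $S_>(\kappa,\kappa^+)$, which is non-stationary (Lemma~\ref{lem:S_<_S_>_stat}) and hence not $1$-stationary. The fallback is too vague to work as stated. The traces of a single $T\subseteq\mcal{P}_\kappa\lambda$ cannot be prescribed independently at each $\mu$. At the top level, $\kappa$ is weakly compact, so there is no nonreflecting stationary subset of $\kappa$ to lift. A genuinely two-cardinal set is needed.

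What is missing is a specific nonreflecting $1$-stationary set, together with a forcing that kills reflection exactly at its possible reflection points. The paper proceeds as follows.
\begin{itemize}
\item It reduces all $\lambda>\kappa$ to $\lambda=\kappa^+$ by the downward Menas theorem for $2$-stationary sets (Proposition~\ref{prop:downward_Menas_2stat}). By Corollary~\ref{cor:d_1stat}, it then suffices that $S=S(\kappa,\kappa^+)$ is $1$-stationary while $d_1(S)$ is non-stationary.
\item $S$ never reflects at points of $S_<(\kappa,\kappa^+)$, and $S_>(\kappa,\kappa^+)$ is non-stationary. So the only candidates are $x\in S$, where $\cof(\sup x)=\mu^+$ and reflection means the copy $S\cap\mcal{P}_\mu x$ of $S(\mu,\mu^+)$ is $1$-stationary.
\item For non-measurable $\mu$, these are killed by the Apter--Cummings partial square $\square^E_\kappa$ (Lemma~\ref{lem:psquare_S}). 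That square is available precisely off the cofinalities $\mu^+$ with $\mu$ measurable.
\item For measurable $\mu<\kappa$, an Easton iteration of Shioya's forcing makes $S(\mu,\mu^+)$ non-stationary. So measurables below $\kappa$ are handled, not destroyed, which is the reverse of your picture.
\item Strong compactness survives by Magidor's argument (Proposition~\ref{prop:pres_strcmpct}). That is where ``no measurable above $\kappa$'' is actually used, not in (II).
\item $1$-stationarity of $S$ survives by a $\kappa$-c.c.\ covering argument from the intermediate model where $\kappa$ is still supercompact.
\end{itemize}
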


We also discuss Menas' Theorem for $1$-stationary and $2$-stationary subsets of $\mcal{P}_\kappa \lambda$. Menas \cite{Me} proved that if $\kappa$ is a regular uncountable cardinal, and $\lambda$ and $\lambda '$ are cardinals with $\kappa \leq \lambda < \lambda '$, then $S \subseteq \mcal{P}_\kappa \lambda$ is stationary in $\mcal{P}_\kappa \lambda$ if and only if
\[
S \uparrow \lambda ' := \{ x' \in \mcal{P}_\kappa \lambda ' : x' \cap \lambda \in S \}
\]
is stationary in $\mcal{P}_\kappa \lambda '$. We prove that the same holds for $1$-stationary sets when $\kappa$ is a Mahlo cardinal. We also prove that, for $2$-stationary sets, the ``if'' direction of Menas' Theorem holds, whereas the ``only if'' direction consistently fails.

This paper is constructed as follows. In \S \ref{sec:preliminaries}, we present our notation and the basic facts used throughout this paper. In \S \ref{sec:higher_stat}, we review basic properties of higher stationary subsets of $\mcal{P}_\kappa \lambda$. In \S \ref{sec:Menas}, we discuss Menas' theorem for $1$-stationary and $2$-stationary sets. Finally, in \S \ref{sec:2stat_stcmpct}, we prove the main theorem.

%--------------------------------------------------------------------------------------------------------------------------------------------------------------------------------------------------------------------------------------------------------
\section{Preliminaries} \label{sec:preliminaries}
%--------------------------------------------------------------------------------------------------------------------------------------------------------------------------------------------------------------------------------------------------------

Here we present our notation and the basic facts used in this paper. For material not covered here, we refer the reader to standard textbooks in Set Theory, such as Jech \cite{Je}, Kanamori \cite{K} and Kunen \cite{Ku}. 

We begin with some miscellaneous notation in Set Theory. Suppose $x$ is a set of ordinals. Let $\otp (x)$ denote the order-type of $x$, and let $\ltpt (x)$ denote the limit points of $x$, that is, the set of all limit ordinals $\alpha \in x$ with $\sup ( x \cap \alpha ) = \alpha$. For a regular cardinal $\gamma$, we say that $x$ is $\gamma$-\emph{closed} if $\sup(y) \in x$ for every $y \subseteq x$ of order-type $\gamma$.

Let $\kappa$ be a regular uncountable cardinal. For a regular cardinal $\mu < \kappa$, let $E^\kappa_\mu$ be the set of all $\alpha < \kappa$ with $\cof ( \alpha ) = \mu$. For a set $A$, let $\mcal{P}_\kappa A$ be the set of all $x \subseteq A$ with $|x| < \kappa$. In most cases, we assume $\kappa \subseteq A$ when working with $\mcal{P}_\kappa A$.

Suppose $\kappa$ is a regular uncountable cardinal and $A \supseteq \kappa$. Let $S \subseteq \mcal{P}_\kappa A$. $S$ is said to be \emph{unbounded} in $\mcal{P}_{\kappa} A$ if for any $x \in \mcal{P}_{\kappa}{A}$ there is some $y \in S$ such that $x \subseteq y$. We say that $S$ is \emph{closed} in $\mcal{P}_{\kappa}{A}$ if $\bigcup_{\alpha < \beta} x_\alpha \in S$ for any $\beta < \kappa$ and any $\subseteq$-increasing sequence  $\langle x_\alpha  : \alpha < \beta \rangle$ in $S$. $S$ is \emph{club} in $\mcal{P}_{\kappa}{A}$ if $S$ is closed and unbounded in $\mcal{P}_{\kappa}{A}$. $S$ is said to be \emph{stationary} in $\mcal{P}_{\kappa}{A}$ if $S \cap Z \neq \emptyset$ for any club $Z \subseteq \mcal{P}_{\kappa}{A}$.

For basic facts on stationary sets, we refer the reader to Jech \cite{J}. We will often use the fact that $S$ is a stationary subset of $\mcal{P}_\kappa A$ if and only if for all $f: A^{< \omega} \rightarrow \mcal{P}_\kappa A$, there is $x \in S$ such that $f(a) \subseteq x$ for all $a \in x^{< \omega}$. We also use the fact that if $\kappa$ is weakly inaccessible, then the set $\{ x \in \mcal{P}_\kappa A : \mbox{$x \cap \kappa$ is a cardinal} \}$ is club in $\mcal{P}_\kappa A$. Note also that if $\kappa$ is inaccessible, then there are club many $x \in \mcal{P}_\kappa A$ with $x \cap \kappa$ strong limit.

In this paper, a $\kappa$-\emph{ultrafilter} over $\kappa$ means a non-principal $\kappa$-complete ultrafilter over $\kappa$. So $\kappa$ is measurable if and only if there is a $\kappa$-ultrafilter over $\kappa$. Similarly, a $\kappa$-\emph{ultrafilter} over $\mcal{P}_\kappa A$ means a non-principal fine $\kappa$-complete ultrafilter over $\mcal{P}_\kappa A$. Thus, $\kappa$ is $\lambda$-strongly compact if and only if there is a $\kappa$-ultrafilter over $\mcal{P}_\kappa \lambda$. Also, $\kappa$ is $\lambda$-supercompact if and only if there is a normal $\kappa$-ultrafilter over $\mcal{P}_\kappa \lambda$. See Jech \cite{Je} and Kanamori \cite{K} for basic facts on these notions.

By an \emph{inner model} we mean a transitive model of ZFC containing all ordinals. Let $M$ be an inner model, and suppose that in $M$, $U$ is a countably complete ultrafilter over some set. Then $\mrm{Ult} ( M , U )$ denotes the transitive collapse of the ultrapower of $M$ by $U$.

In the rest of this section, we introduce our notation and basic facts concerning forcing. We largely follow Kunen \cite{Ku} and Cummings \cite{C} for the notation of forcing.

A \emph{forcing poset} is a pre-ordered set with a largest element. Let $\mbb{P}$ be a forcing poset. The largest element of $\mbb{P}$ is denoted by $1_\mbb{P}$. For a $\mbb{P}$-name $\dot{x}$ and a $\mbb{P}$-generic filter $G$ over $V$, we let $i_G ( \dot{x} )$ denote the interpretation of $\dot{x}$ by $G$. Let $\mbb{P}'$ be another forcing poset. We say that $\mbb{P}$ and $\mbb{P}'$ are \emph{forcing equivalent} and denote as $\mbb{P} \sim \mbb{P}'$ if for some forcing poset $\mbb{Q}$, there are dense embeddings $d : \mbb{P} \to \mbb{Q}$ and $d' : \mbb{P}' \to \mbb{Q}$.

Let $\kappa$ be a cardinal. We say that $\mbb{P}$ satisfies the \emph{$\kappa$-chain condition} (or is $\kappa$-\emph{c.c.}) if every antichain in $\mbb{P}$ has size $< \kappa$. We now recall the notion of$\kappa$-strategic closure of $\mbb{P}$.

Let $\Game_\kappa ( \mbb{P} )$ be the following two-player game of length $\kappa$, played by Even and Odd .
\[
\begin{array}{c|cccccccccc}
\mbox{Even} & p_0 = 1_\mbb{P} & & p_2 & & \cdots\cdots & p_\omega & & \cdots\cdots \\
\hline
\mbox{Odd} & & p_1 & & p_3 & \cdots\cdots & & p_{\omega + 1} & \cdots\cdots
\end{array}
\]
Even and Odd in turn choose conditions $p_\alpha$ in $\mbb{P}$ to construct a descending sequence $\langle p_\alpha : \alpha < \kappa \rangle$. Even plays at even stages and Odd at odd stages. At the $0$-th stage, Even must play $p_0 = 1_\mbb{P}$. It may happen that for some limit $\alpha < \kappa$, the set $\{ p_\beta : \beta < \alpha \}$ has no lower bound in $\mbb{P}$, in which case Even cannot choose $p_\alpha$. In that case, the game is over at the stage $\alpha$, and Odd wins. Even wins if the play  continues for $\kappa$-stages, producing a descending sequence $\langle p_\alpha : \alpha < \kappa \rangle$.

$\mbb{P}$ is said to be $\kappa$-\emph{strategically closed} if Even has a winning strategy in $\Game_\kappa ( \mbb{P} )$. Note that if $\mbb{P}$ is $\kappa$-strategically closed, then a forcing with $\mbb{P}$ adds no new sequences of ordinals of length $< \kappa$.

For notation concerning forcing iterations, we follow Cummings \cite{C}. We briefly review some notation and basic facts here. See \cite{C} and Baumgartner \cite{B} for further details.

A forcing iteration of length $\gamma$ is denoted as $(\langle \mbb{P}_\alpha : \alpha \leq \gamma \rangle , \langle \dot{\mbb{Q}}_\alpha : \alpha < \gamma \rangle)$, where each $\mbb{P}_\alpha$ is a forcing poset, and each $\dot{\mbb{Q}}_\alpha$ is a $\mbb{P}_\alpha$-name for a forcing poset. As is usual, $( \langle \mbb{P}_\alpha : \alpha \leq \gamma \rangle , \langle \dot{\mbb{Q}}_\alpha : \alpha < \gamma \rangle )$ is often denoted simply by  $\mbb{P}_\gamma$. Also, the forcing relation $\Vdash_{\mbb{P}_\alpha}$ is often abbreviated as $\Vdash_\alpha$.

Suppose $( \langle \mbb{P}_\alpha : \alpha \leq \gamma \rangle , \langle \dot{\mbb{Q}}_\alpha : \alpha < \gamma \rangle )$ is a forcing iteration. For $\alpha < \gamma$, there is a $\mbb{P}_\alpha$-name $\dot{\mbb{P}}_{\alpha , \gamma}$ such that $\mbb{P}_\gamma$ and $\mbb{P}_\alpha * \dot{\mbb{P}}_{\alpha , \gamma}$ are forcing equivalent. We refer to such $\dot{\mbb{P}}_{\alpha , \gamma}$ as a \emph{tail} of $\mbb{P}_\gamma$ after $\alpha$.

In the proof of the main theorem, we will use an Easton support forcing iteration, which takes inverse limits at singular limit stages and direct limits at regular limit stages. The following is standard.

\begin{fact} \label{fact:Easton_it_basics}
Let $\gamma$ be an inaccessible cardinal and $B$ be a subset of $\gamma$ consisting of inaccessible cardinals. Suppose $( \langle \mbb{P}_\alpha : \alpha \leq \gamma \rangle , \langle \dot{\mbb{Q}}_\alpha : \alpha < \gamma \rangle )$ is an Easton support iteration with the following properties.
\begin{itemize}
\item[(i)] If $\alpha \notin B$, then $\Vdash_\alpha ``\,\mbox{$\dot{\mbb{Q}}_\alpha$ is trivial}\,"$,
\item[(ii)] If $\alpha \in B$, then $\Vdash_\alpha ``\,\mbox{$\dot{\mbb{Q}}_\alpha$ is $\alpha$-strategically closed, $\alpha^+$-c.c., and $| \dot{\mbb{Q}}_\alpha | \leq \alpha^+$}\,"$.
\end{itemize}
Then we have the following.
\begin{enumerate}
\item $\mbb{P}_\gamma$ has a dense subset of size $\leq \gamma$, and if $\gamma$ is Mahlo, then $\mbb{P}_\gamma$ is $\gamma$-c.c.
\item Suppose $\alpha < \gamma$. Let $\beta$ be $\min ( B \setminus \alpha )$ and $\dot{\mbb{P}}_{\alpha , \gamma}$ be a tail of $\mbb{P}_\gamma$ after $\alpha$. Then $\mbb{P}_\alpha$ forces $\dot{\mbb{P}}_{\alpha , \gamma}$ to be $\beta$-strategically closed.
\item If GCH holds in $V$, then $\mbb{P}_\gamma$ preserves GCH and all cofinalities.
\end{enumerate}
\end{fact}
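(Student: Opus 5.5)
The three clauses are standard properties of Easton support iterations. The plan is to prove them in the order (2), (1), (3), since the chain condition and GCH preservation both rest on the factorization behind (2). Throughout, conditions $p \in \mbb{P}_\gamma$ have Easton support: $\mathrm{supp}(p) \cap \delta$ is bounded in $\delta$ for every regular $\delta \leq \gamma$. By (i), only coordinates in $B$ matter, so we may take supports to be subsets of $B$.

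For (2), fix $\alpha < \gamma$ and $\beta = \min(B \setminus \alpha)$. In $V^{\mbb{P}_\alpha}$ the tail $\dot{\mbb{P}}_{\alpha,\gamma}$ is again an Easton support iteration whose nontrivial iterands are indexed by $B \setminus \alpha$. Each of these iterands $\dot{\mbb{Q}}_\delta$ is forced to be $\delta$-strategically closed, hence $\beta$-strategically closed. A winning strategy for Even in $\Game_\beta$ on the tail is built coordinatewise: at each stage Even plays, on every coordinate $\delta$ in the union of the supports played so far, the move dictated by (a name for) the strategy on $\dot{\mbb{Q}}_\delta$.

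At limit stages $\eta < \beta$ we need a lower bound, and this is where Easton support is used. The union of fewer than $\beta$ supports must remain an Easton set. At a regular $\delta > \eta$ this holds because a union of $<\delta$ bounded subsets of $\delta$ is bounded. At regular $\delta \leq \eta < \beta$ there is nothing to check, since no coordinates of the tail lie below $\beta$. The lower bound is then defined by induction on coordinates, using the names for the strategies. This inductive construction is the main technical point; it is handled as in Cummings, Handbook, \S 7.

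For (1), count names. Since $\gamma$ is inaccessible and each $|\dot{\mbb{Q}}_\delta| \leq \delta^+ < \gamma$, induction gives $\mbb{P}_\alpha$ a dense subset of size $<\gamma$ for every $\alpha < \gamma$. Each iterand may be replaced by nice names for elements of a set of size $\delta^+$, and there are fewer than $\gamma$ such names. At $\gamma$ itself a condition is determined by its restriction to a bounded support, because $\gamma$ is regular and supports are Easton. This gives a dense subset of size $\leq \gamma$.

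If $\gamma$ is Mahlo, we run the standard $\Delta$-system/Fodor argument. Take an antichain $\{p_\xi : \xi < \gamma\}$. On the stationary set of inaccessible $\delta < \gamma$ with $\mbb{P}_\delta$ of size $<\gamma$ (indeed $\leq\delta$, after identifying conditions with their restrictions), the support of $p_\delta$ below $\delta$ is bounded. Fodor's lemma gives a stationary set on which this bound is constant and on which $p_\delta \restriction \delta$ is a fixed element of a small set. Two conditions from this set are then compatible, because their supports above the common part can be separated. This contradicts the assumption that we had an antichain, and it also shows that $\mbb{P}_\alpha$ is $\gamma$-c.c. for $\alpha < \gamma$.

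For (3), take a cardinal $\mu$.
\begin{itemize}
\item Factor $\mbb{P}_\gamma \simeq \mbb{P}_{\alpha} * \dot{\mbb{P}}_{\alpha,\gamma}$, where $\alpha$ is the least element of $B$ above $\mu$ (when it exists).
\item By (2) the tail adds no subsets of $\mu$, and $\mbb{P}_\alpha$ has size $\leq \alpha$ by (1).
\item Counting nice names, $\mbb{P}_\alpha$ gives $2^\mu \leq (\alpha^{\mu})$, which equals $\mu^+$ under GCH after a further factoring at $\max(B\cap(\mu+1))$, using the $\delta^+$-c.c.\ and size $\delta^+$ of the last iterand.
\item Cardinals and cofinalities are preserved by combining chain conditions and closure: the initial part is small relative to cardinals above it, and the tail is closed enough below.
\item Above $\gamma$, the size bound $\leq\gamma$ gives the result.
\end{itemize}

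I expect the limit-stage lower bound in (2) to be the main obstacle.
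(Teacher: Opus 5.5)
The paper does not prove this Fact; it only quotes it as standard, with pointers to Cummings and Baumgartner. So your proposal has to stand on its own. Your outlines of (1) and (2) are the usual arguments and are fine as sketches. One point is hidden in your reference to Cummings for (2): tail conditions have ground-model Easton supports, so the $V[G_\alpha]$-union of fewer than $\beta$ of them must be covered by a single ground-model Easton set. This uses that $\mbb{P}_\alpha$ has a dense subset of size $\le\beta$.

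The genuine gap is the GCH computation in (3). With $\alpha=\min(B\setminus(\mu+1))$ inaccessible, $\alpha^\mu=\alpha$, not $\mu^+$. So everything rests on your ``further factoring at $\max(B\cap(\mu+1))$'', and that maximum need not exist. The bad case is a singular $\mu$ that is a limit of points of $B$, say $\mu=\sup_n\delta_n$ with $\delta_n\in B$. The relevant initial segment is then the inverse limit $\mbb{P}_{\mu+1}\sim\mbb{P}_\mu$, and it does not have the $\mu^+$-c.c. For example, let $\dot{\mbb{Q}}_{\delta_n}$ be $\mathrm{Add}(\delta_n,1)$. On each coordinate $\delta_n$, choose an element of a fixed ground-model antichain of size $\delta_n$; the support $\{\delta_n:n<\omega\}$ is an Easton set. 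This gives an antichain of size $\prod_n\delta_n=\mu^{\aleph_0}=\mu^+$. Counting nice names over $\mbb{P}_\mu$ therefore only yields $2^\mu\le\mu^{++}$.

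The missing idea is to treat singular $\mu$ by cardinal arithmetic.
First, by induction GCH holds below $\mu$ in $V[G]$, so $2^\mu=\mu^\theta$ there, where $\theta=\mathrm{cf}(\mu)$.
Second, by (2) the tail after $\theta+1$ adds no new $\theta$-sequences, so all of them lie in $V[G_{\theta+1}]$.
Third, $\mbb{P}_{\theta+1}$ has a dense subset of size $\le\theta^+<\mu$, whence $(\mu^\theta)^{V[G]}\le(\mu^{\theta^+})^V=\mu^+$.

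Your cofinality bullet needs the same reorganization. A chain-condition argument at, or just below, the cardinal in question breaks down in two places: $\mbb{P}_\delta$ need not be $\delta$-c.c.\ for a non-Mahlo $\delta\in B$, and $\mbb{P}_\mu$ is not $\mu^+$-c.c.\ in the example above. Instead, suppose a $V$-regular $\mu$ acquired cofinality $\theta<\mu$, and factor at $\theta+1$. By (2), the cofinal sequence lies in $V[G_{\theta+1}]$. Since $\theta$ is regular, under GCH $\mbb{P}_{\theta+1}$ is $\theta^+$-c.c.: $\mbb{P}_\theta$ has a dense subset of size $\le\theta$, and $\dot{\mbb{Q}}_\theta$ is $\theta^+$-c.c.\ when $\theta\in B$. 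So $\mu\ge\theta^+$ remains regular in $V[G_{\theta+1}]$, a contradiction.
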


%----------------------------------------------------------------------------------

%--------------------------------------------------------------------------------------------------------------------------------------------------------------------------------------------------------------------------------------------------------
\section{Higher stationary subsets of $\mcal{P}_\kappa A$} \label{sec:higher_stat}
%--------------------------------------------------------------------------------------------------------------------------------------------------------------------------------------------------------------------------------------------------------

In this section, we briefly review the notion of $n$-stationary subsets of $\mcal{P}_\kappa A$ and some of its consequences relevant to this paper. We begin by recalling the definition.

%----------------------------------------------------------------------------------

\begin{definition*}[Cody, Lambie-Hanson \& Zhang \cite{CLHZ}, Torres \cite{T}]
By recursion on $n < \omega$, we define the notion of $n$-stationary subsets of $\mcal{P}_\kappa A$ for a weakly inaccessible cardinal $\kappa$ and a set $A \supseteq \kappa$ as follows.
\begin{itemize}
\item $S\subseteq \mcal{P}_{\kappa}{A}$ is $0$-\emph{stationary} in $\mcal{P}_{\kappa}{A}$ if  $S$ is unbounded in $\mcal{P}_{\kappa}{A}$.
\item For $n > 0$, $S\subseteq \mcal{P}_{\kappa}{A}$ is $n$-\emph{stationary} in $\mcal{P}_{\kappa}{A}$ if for all $m<n$ and all $m$-stationary $T \subseteq \mcal{P}_{\kappa}{A}$, there is $x \in S$ such that $\mu := x \cap \kappa$ is a weakly inaccessible cardinal, and $T\cap \mcal{P}_{\mu}{x}$ is $m$-stationary in $\mcal{P}_{\mu}{x}$.
\end{itemize}
For $T\subseteq \mcal{P}_{\kappa}{A}$ and $n < \omega$, let $d_n(T)$ be the set of all $x \in \mcal{P}_{\kappa}{A}$ such that $\mu:= x \cap \kappa$ is a weakly inaccessible cardinal and $T \cap \mcal{P}_{\mu}{x}$ is $n$-stationary in $\mcal{P}_{\mu}{x}$.
\end{definition*}

%----------------------------------------------------------------------------------

Note that if $S$ is $n$-stationary in $\mcal{P}_\kappa A$, then $S$ is $m$-stationary in $\mcal{P}_\kappa A$ for all $m < n$. For basic facts about  $n$-stationary subsets of $\mcal{P}_\kappa A$, see \cite{CLHZ, T}.

In \cite{T}, it is proved that if $\kappa$ is not weakly Mahlo, then $\mcal{P}_\kappa A$ is not $1$-stationary. Hence the notion of $n$-stationary subsets of $\mcal{P}_\kappa A$ for $n \geq 1$ is vacuous unless $\kappa$ is weakly Mahlo.

For the following fact concerning $1$-stationary sets, we first recall the definition of strongly stationary sets. Let $\kappa$ be a Mahlo cardinal and let $A$ be a set including $\kappa$. $S\subseteq \mcal{P}_{\kappa}{A}$ is said to be \emph{strongly stationary} if for all $f : \mcal{P}_{\kappa}{A} \rightarrow \mcal{P}_{\kappa}{A}$, there is $x \in S$ such that $x \cap \kappa$ is inaccessible, and $\mcal{P}_{x \cap \kappa} x$ is closed under $f$, i.e.~$f[\mcal{P}_{x \cap \kappa}{x}] \subseteq \mcal{P}_{x \cap \kappa}{x}$.

\begin{fact}[\cite{CLHZ, T}] \label{fact:1-stat}
Suppose that $\kappa$ is a weakly inaccessible cardinal, $A \supseteq \kappa$ and $S \subseteq \mcal{P}_\kappa A$.
\begin{enumerate}
\item Suppose $S$ is $1$-stationary in $\mcal{P}_\kappa A$. Then $S$ is stationary in $\mcal{P}_\kappa A$, and $S \cap Z$ is $1$-stationary in $\mcal{P}_\kappa A$ for any club $Z \subseteq \mcal{P}_\kappa A$.
\item Suppose $\kappa$ is Mahlo. Then $S \subseteq \mcal{P}_{\kappa}{A}$ is $1$-stationary if and only if $S$ is strongly stationary.
\end{enumerate}
\end{fact}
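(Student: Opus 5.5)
We prove the two items separately. Throughout, we use the characterization of stationarity by closure under functions $f : A^{<\omega} \to \mcal{P}_\kappa A$. We also use the fact that, since $\kappa$ is weakly inaccessible, the set $C_0 := \{ x \in \mcal{P}_\kappa A : x \cap \kappa \mbox{ is a cardinal}\}$ is club.

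For item (1), the first claim is that a $1$-stationary $S$ is stationary. Fix a club $Z$ and suppose towards a contradiction that $S \cap Z = \emptyset$. Take the $0$-stationary (unbounded) set $T := Z$. By $1$-stationarity we get $x \in S$ with $\mu := x \cap \kappa$ weakly inaccessible and $Z \cap \mcal{P}_\mu x$ unbounded in $\mcal{P}_\mu x$. Since $|x| < \kappa$ and $\mu$ is regular, we can build a $\subseteq$-increasing sequence $\langle z_\alpha : \alpha < \mu \rangle$ in $Z \cap \mcal{P}_\mu x$ covering $x$, using an enumeration of $x$ of length $|x|$.

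Here is a subtlety: $|x|$ may exceed $\mu$, and then $x \notin \mcal{P}_\mu x$ cannot be reached as a union of length $<\kappa$ of sets of size $<\mu$. So instead I would intersect with a club first. Let $f$ witness the club $Z$, i.e.\ every $x$ closed under $f$ with $x\cap\kappa\in\kappa$ lies in $Z$, and let $T := \{ y : y \mbox{ closed under } f \}$. Then $T \subseteq Z$ (modulo a club) and $T$ is unbounded. Now reflection yields $x \in S$ with $T \cap \mcal{P}_\mu x$ unbounded in $\mcal{P}_\mu x$. Hence every finite $a \subseteq x$ lies in some $y \subseteq x$ closed under $f$, so $f(a) \subseteq y \subseteq x$. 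Thus $x$ itself is closed under $f$, so $x \in S \cap Z$.

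This closure-under-functions trick is the key step, and it also handles the second claim of item (1). Given an $m$-stationary $T$ with $m = 0$, replace $T$ by $T' := T \cap C_f$, where $C_f$ is the club of $f$-closed sets. We need $T'$ still $0$-stationary, which fails for arbitrary unbounded $T$. So instead, for a given unbounded $T$, I would use a set $T''$ that is unbounded and consists of sets that are unions of members of $T$ closed under $f$. More simply: reflect the pair using $T$ and get $x \in S$ with $T \cap \mcal{P}_\mu x$ unbounded, and we also need $x \in Z$. To get this, apply $1$-stationarity to the modified unbounded set $T^* := \{ y : y \supseteq t \cup f\mbox{-closure} \ldots\}$. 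Concretely, for each $t \in T$, choose $y_t \supseteq t$ with $y_t$ closed under $f$ and $y_t \in \mcal{P}_\kappa A$, and set $T^* := \{ y_t \cup t' : \ldots \}$. The cleaner approach is to choose $T^* \subseteq C_f$ such that each member of $T^*$ contains a member of $T$ and each member of $T$ is contained in a member of $T^*$ of the same size class. Then the unboundedness of $T^* \cap \mcal{P}_\mu x$ gives both $x \in C_f$, as above, and the unboundedness of $T \cap \mcal{P}_\mu x$, provided $\mu$ is closed under the map $t \mapsto y_t$ restricted to sizes. This last point is the main obstacle. To handle it I would take $\mu$ strong limit / use that $|y_t|$ can be taken $\le |t|+\omega$ via Skolem closure, so $|y_t|<\mu$ whenever $|t|<\mu$.

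For item (2), we argue both directions. If $S$ is $1$-stationary and $f : \mcal{P}_\kappa A \to \mcal{P}_\kappa A$ is given, let $T := \{ y : f[\mcal{P}_{\omega}... ]\}$; more precisely, let $T$ be the unbounded set of $y$ such that $f(y) \subseteq y'$ for some chain structure. I would take $T := \{ y \cup f(y) : y \in \mcal{P}_\kappa A \}$ together with a club $C$ of $x$ with $x \cap \kappa$ inaccessible, using Mahloness and item (1). Unboundedness of $\{ y\cup f(y) \}$ inside $\mcal{P}_\mu x$ does not directly give closure of $\mcal{P}_\mu x$ under $f$. So instead I would encode $f$ by a function $g$ on $A^{<\omega}$ together with the requirement $|f(y)|$ small, and reflect the unbounded set $T := \{ y : \forall z \subseteq y\, (|z| < |y\cap\kappa| \to f(z) \subseteq y)\}$ (after arranging $y \cap \kappa$ to be a cardinal). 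Then for $z \in \mcal{P}_\mu x$, pick $y \in T \cap \mcal{P}_\mu x$ containing $z$ with $|z| < y \cap \kappa$, and conclude $f(z) \subseteq y \subseteq x$. Conversely, given strong stationarity and an unbounded $T$, define $f(z) :=$ some member of $T$ containing $z$. Strong stationarity then gives $x$ with $\mcal{P}_\mu x$ closed under $f$, so $T \cap \mcal{P}_\mu x$ is unbounded.
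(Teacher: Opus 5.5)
The paper quotes this fact from the literature without proof, so your argument has to be judged on its own. Your proof that a $1$-stationary $S$ is stationary is correct. Take a Menas function $f$ with $C_f\subseteq Z$ and reflect the club $C_f$ of $f$-closed sets; then every finite $a\subseteq x$ lies inside some $f$-closed $y\subseteq x$, so $x\in C_f\subseteq Z$.

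\textbf{The gap in the second half of (1).} With $T^*=\{y_t : t\in T\}\subseteq C_f$, unboundedness of $T^*\cap\mcal{P}_\mu x$ does give $x\in C_f$. It does \emph{not} give unboundedness of $T\cap\mcal{P}_\mu x$: for $w\in\mcal{P}_\mu x$ you only get $w\subseteq y_t\subseteq x$ with $t\subseteq y_t$, and nothing puts $w$ inside $t$. The ``main obstacle'' you isolate (keeping $|y_t|<\mu$) concerns the opposite inference, from $T$ to $T^*$. Your fix for it is false anyway: Menas functions take values of arbitrary size below $\kappa$. Nor can single-valued Skolem functions witness clubs when $\kappa>\omega_1$, since their closed sets include countable ones while $\{x:\omega_1\subseteq x\}$ is club.

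What is missing is a way to merge two unbounded sets into one; this is precisely Fact~\ref{fact:1stat_easy} with $T_1=C_f$. One way to do it:
\begin{itemize}
\item First, $\kappa$ is weakly Mahlo. Otherwise take a club $D\subseteq\kappa$ avoiding regular cardinals; the unbounded set of $t$ with $\max(t\cap\kappa)\in D$ reflects nowhere.
\item Let $T^*$ consist of all $y$ such that $\gamma:=y\cap\kappa$ is a regular cardinal and $y=\bigcup_{\xi<\gamma}c_\xi=\bigcup_{\xi<\gamma}t_\xi$ for an interleaved chain $c_0\subseteq t_0\subseteq c_1\subseteq t_1\subseteq\cdots$ with $c_\xi\in C_f$ and $t_\xi\in T$. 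Then $T^*\subseteq C_f$.
\item $T^*$ is unbounded: build such a chain of length $\kappa$ with $\xi\subseteq c_{\xi+1}$, and cut it at a regular closure point of $\xi\mapsto c_\xi\cap\kappa$.
\item Suppose $T^*\cap\mcal{P}_\mu x$ is unbounded and $w\in\mcal{P}_\mu x$. Any $y\in T^*\cap\mcal{P}_\mu x$ containing $w\cup|w|^+$ has chain length $\gamma>|w|$, so $w\subseteq t_\xi$ for some $\xi$.
\end{itemize}
Your passing idea of ``unions of members of $T$'' points this way. The essential ingredient you lack is the regular chain length tied to $y\cap\kappa$.

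\textbf{Item (2).} The direction from strong stationarity is fine. For the other direction, reflecting $T=\{y: \forall z\subseteq y\,(|z|<y\cap\kappa\to f(z)\subseteq y)\}$ is the right idea. The step via $y\supseteq z\cup|z|^+$ works and gives $|f(z)|\le|y|<\mu$. Two things are missing.

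First, unboundedness of $T$ is only asserted, and it needs Mahloness. From $w$, build $y_0=w$ and $y_{\eta+1}=y_\eta\cup\bigcup\{f(z):z\subseteq y_\eta\}\cup(\eta\cup\sup(y_\eta\cap\kappa))$, with unions at limits. Sizes stay below $\kappa$ because $\kappa$ is a strong limit. Stop at an inaccessible closure point $\nu$ of $\eta\mapsto\sup(y_\eta\cap\kappa)$. Then $y_\nu\cap\kappa=\nu$, and regularity of $\nu$ puts each $z\in\mcal{P}_\nu y_\nu$ into some $y_\eta$ with $\eta<\nu$.

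Second, reflection only yields $\mu=x\cap\kappa$ weakly inaccessible, whereas strong stationarity requires $\mu$ inaccessible. The construction above lets you require $y\cap\kappa$ inaccessible for all $y\in T$. Then $\mu$ is a limit of strong limit cardinals, hence inaccessible. Intersecting $S$ with the club of $x$ with $x\cap\kappa$ strong limit would also work, but that relies on the second half of (1), which is exactly where your argument breaks.
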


%It follows from Fact \ref{fact:1-stat} (1) and the definition of $1$-stationarity that if $\kappa$ is not weakly Mahlo, then there is no $1$-stationary $S \subseteq \mcal{P}_\kappa A$. So the notion of $n$-stationary subsets of $\mcal{P}_\kappa A$ for $n \geq 1$ is vacuous unless $\kappa$ is weakly Mahlo.

Here we make a remark, which will be often used and can be easily checked.

\begin{rmk} \label{rmk:base_set}
Suppose $\kappa$ is a regular uncountable cardinal. Let $A$ and $B$ be sets with $\kappa \subseteq A , B$ and $|A| = |B|$, and let $\pi : A \to B$ be a bijection. Suppose $S \subseteq \mcal{P}_\kappa A$ and $n < \omega$. Then $S$ is stationary, strongly stationary or $n$-stationary in $\mcal{P}_\kappa A$ if and only if the set $\{ \pi [x] : x \in S \}$ is stationary, strongly stationary or $n$-stationary in $\mcal{P}_\kappa B$, respectively.
\end{rmk}

As we mentioned in the introduction, if $\kappa$ is $\lambda$-supercompact, then $\mcal{P}_\kappa \lambda$ is $n$-stationary for all $n < \omega$. In \cite{CLHZ, T}, this was shown using an additional assumption that $\lambda^{< \kappa} = \lambda$. However, this assumption is not necessary.

\begin{prop} \label{prop:spcmpct_n_stat}Suppose $\kappa$ is a $\lambda$-supercompact cardinal, where $\lambda \geq \kappa$. Let $U$ be a normal $\kappa$-ultrafilter over $\mcal{P}_\kappa \lambda$. Then, $d_n (T) \in U$ for any $n < \omega$ and any $n$-stationary $T \subseteq \mcal{P}_\kappa \lambda$. Therefore, $\mcal{P}_\kappa \lambda$ is $n$-stationary in itself for all $n < \omega$.
\end{prop}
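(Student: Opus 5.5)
We will use the ultrapower $j : V \to M = \mrm{Ult}(V,U)$ and Łoś's theorem. By normality, $d_n(T) \in U$ holds iff $j[\lambda] \in j(d_n(T))$. So the task is to show, in $M$, that $j[\lambda]$ witnesses membership in $j(d_n(T))$. Concretely, we must check three things:
\begin{itemize}
\item $j[\lambda] \cap j(\kappa) = \kappa$;
\item $\kappa$ is weakly inaccessible in $M$;
\item $j(T) \cap \mcal{P}_\kappa j[\lambda]$ is $n$-stationary in $\mcal{P}_\kappa j[\lambda]$ as computed in $M$.
\end{itemize}
The first two are immediate: $\mrm{crit}(j) = \kappa$, and $\kappa$ is inaccessible in $V$, hence in $M$.

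For the third item, we transfer via the map $\pi = j \restriction \lambda : \lambda \to j[\lambda]$, which lies in $M$ because $M^\lambda \subseteq M$. For $x \in \mcal{P}_\kappa \lambda$ we have $|x| < \kappa$, so $j(x) = j[x] = \pi[x]$. Hence
\[
j(T) \cap \mcal{P}_\kappa j[\lambda] = \{ \pi[x] : x \in T \}.
\]
The inclusion $\supseteq$ is clear. For $\subseteq$, any $y \in \mcal{P}_\kappa j[\lambda]$ has the form $\pi[x] = j(x)$ with $x = \pi^{-1}[y] \in \mcal{P}_\kappa\lambda$, and $j(x) \in j(T)$ iff $x \in T$. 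By Remark \ref{rmk:base_set} applied inside $M$, it therefore suffices to show that $T$ is $n$-stationary in $\mcal{P}_\kappa \lambda$ in the sense of $M$.

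This reduces everything to an absoluteness claim: for $A$ with $|A| \le \lambda$ (we only need $A = \lambda$, and subsets of $\mcal{P}_\mu x$ for $x \in \mcal{P}_\kappa\lambda$), $n$-stationarity of subsets of $\mcal{P}_\kappa \lambda$ is the same in $V$ and in $M$. We argue by induction on $n$, using $M^\lambda \subseteq M$:
\begin{itemize}
\item Every subset of $\mcal{P}_\kappa \lambda$ in $V$ belongs to $M$. This holds because $|\mcal{P}_\kappa\lambda| = \lambda^{<\kappa}$, and each $x \in \mcal{P}_\kappa\lambda$ is in $M$; a subset is coded by a $\lambda^{<\kappa}$-sequence. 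This needs closure under $\lambda^{<\kappa}$-sequences, not just $\lambda$-sequences, so a little care is required.
\item Conversely, $\mcal{P}(\mcal{P}_\kappa\lambda)^M \subseteq V$ trivially.
\end{itemize}
So both models have the same subsets of $\mcal{P}_\kappa\lambda$, and likewise the same subsets of each $\mcal{P}_\mu x$, which are small objects. Unboundedness is absolute once the sets agree, and the recursive definition quantifies only over such subsets and over the elements $x$. Induction on $n$ then gives that $n$-stationarity is absolute between $V$ and $M$.

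The main obstacle is exactly the first bullet. Avoiding $\lambda^{<\kappa} = \lambda$ is the point of the proposition, and $M$ need not contain all subsets of $\mcal{P}_\kappa\lambda$ when $\lambda^{<\kappa} > \lambda$. We would fix this by relativizing instead of claiming full agreement. It is enough that $\mcal{P}_\kappa\lambda \subseteq M$ and that $T \in M$, with $T$ being $n$-stationary in $M$. For $T$ we use $T = \pi^{-1}$-preimage of $j(T) \cap \mcal{P}_\kappa j[\lambda]$, which is in $M$.

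For the upward direction we induct: if $T$ is $n$-stationary in $V$ and $T' \in M$ is $m$-stationary in $M$, then $T'$ is $m$-stationary in $V$, provided the "downward" direction holds at level $m$. That is, $m$-stationary in $M$ implies $m$-stationary in $V$. That direction in turn requires that $V$-witness sets be in $M$.

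To break this circularity, we would instead prove both directions simultaneously by induction on $n$, for subsets of $\mcal{P}_\mu x$ with $\mu \le \kappa$. We would use the characterization of $1$-stationarity as strong stationarity (Fact \ref{fact:1-stat}), which involves functions $f$ that can be approximated inside $M$. If this fails, we fall back to the observation that only sets $T'$ of the form $j(\cdot)$-restrictions matter.

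Finally, since $d_n(T) \in U$ and every set in $U$ is nonempty, $\mcal{P}_\kappa\lambda$ is $n$-stationary in itself: for each $m < n$ and each $m$-stationary $T$, $d_m(T) \in U$ is nonempty.
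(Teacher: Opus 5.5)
\textbf{There is a genuine gap, in the absoluteness step.} Your reduction matches the paper's: use normality to reduce to $j[\lambda]\in j(d_n(T))$, compute $j(T)\cap\mcal{P}_\kappa j[\lambda]=\{j[x]:x\in T\}$, and transfer by Remark~\ref{rmk:base_set}. The absoluteness step is the only nontrivial part. You first propose exactly what the paper uses: $V$ and $M$ have the same subsets of $\mcal{P}_\kappa\lambda$, because $M$ is closed under $\lambda^{<\kappa}$-sequences. You then discard this on the grounds that $M$ ``need not contain all subsets of $\mcal{P}_\kappa\lambda$ when $\lambda^{<\kappa}>\lambda$''. That belief is false.

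For the ultrapower by a normal fine ultrafilter on $\mcal{P}_\kappa\lambda$ one has ${}^{\lambda^{<\kappa}}M\subseteq M$; this is Kanamori, Proposition 22.11, which the paper cites. More directly, the argument you give for $T$ works for every $S\subseteq\mcal{P}_\kappa\lambda$. Since $(\mcal{P}_\kappa\lambda)^M=\mcal{P}_\kappa\lambda$ and $j\upharpoonright\lambda\in M$, we have
\[
S=\{x\in\mcal{P}_\kappa\lambda : (j\upharpoonright\lambda)[x]\in j(S)\}\in M,
\]
because $(j\upharpoonright\lambda)[x]=j(x)$ whenever $|x|<\kappa$. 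Hence $\mcal{P}(\mcal{P}_\kappa\lambda)\subseteq M$. Every $\mcal{P}_\mu x$ with $x\in\mcal{P}_\kappa\lambda$ is a subset of $\mcal{P}_\kappa\lambda$, and weak inaccessibility of $\mu<\kappa$ is absolute. So a routine induction on $n$ shows that $n$-stationarity of subsets of $\mcal{P}_\kappa\lambda$, and of each $\mcal{P}_\mu x$, is the same in $V$ and in $M$.

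As written, your proposal replaces this with a ``relativized'' induction, which you yourself observe is circular. To show $T$ is $n$-stationary in $M$, you need that every $T'\in M$ that is $m$-stationary in $M$ is $m$-stationary in $V$. Proving that downward direction requires every $k$-stationary set of $V$, for $k<m$, to lie in $M$ so that $M$'s reflection property can be applied to it. That is precisely the full agreement you abandoned. Your suggested fixes are not carried out, and they do not address $n\ge 2$, where arbitrary $m$-stationary sets of $V$ must be reflected. These fixes were a simultaneous induction via strong stationarity, and restricting attention to sets of the form $j(\cdot)\cap\mcal{P}_\kappa j[\lambda]$. The missing ingredient is just the observation $\mcal{P}(\mcal{P}_\kappa\lambda)\subseteq M$; with it, your first plan is the paper's proof.
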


\begin{proof}
Let $M := \mrm{Ult} (V,U)$, and let $j : V \to M$ be the ultrapower map. Note that ${}^{\lambda^{< \kappa}} M \subseteq M$ in $V$. (See Kanamori \cite{K} Proposition 22.11.) Then $j[ \lambda ] \in \mcal{P}_{j( \kappa )} j( \lambda )$ in $M$, and $j[ \lambda ] \cap j( \kappa ) = \kappa$. Moreover $\mcal{P}_\kappa j[ \lambda ]$ and its power set are absolute between $V$ and $M$. So, for any $n < \omega$ and any $S \subseteq \mcal{P}_\kappa j[ \lambda ]$, $S$ is $n$-stationary in $V$ if and only if so is in $M$.

Suppose $n < \omega$ and $T \subseteq \mcal{P}_\kappa \lambda$ is $n$-stationary. It suffices to show that $j[ \lambda ] \in j( d_n (T) )$. For this, it is enough to prove that $j(T) \cap \mcal{P}_\kappa j[ \lambda ]$ is $n$-stationary in $\mcal{P}_\kappa j[ \lambda ]$ in $M$, since $j[ \lambda ] \in \mcal{P}_{j( \kappa )} j( \lambda )$ in $M$, and $j[ \lambda ] \cap j( \kappa ) = \kappa$. Note that
\[
j(T) \cap \mcal{P}_\kappa j[ \lambda ] = \{ j(x) : x \in T \} = \{ j[x] : x \in T \} .
\]
So, by Remark \ref{rmk:base_set}, $j(T) \cap \mcal{P}_\kappa j[ \lambda ]$ is $n$-stationary in $\mcal{P}_\kappa j[ \lambda ]$ in $V$. Then, the same holds  in $M$ by the remark at the end of the previous paragraph.
\end{proof}

We conclude this section with two facts showing that $d_0(X)$ and $d_1(X)$ are $1$-stationary subsets of $\mcal{P}_\kappa A$, when $X$ is unbounded and $1$-stationary, respectively. A generalization to arbitrary $m$ can be found in \cite{CLHZ, T}; here we only use the following.

\begin{fact}[\cite{CLHZ, T}]\label{fact:1stat_easy}
Suppose $\kappa$ is a weakly inaccessible cardinal and $\kappa \subseteq A$. If $S \subseteq \mcal{P}_\kappa A$ is $1$-stationary, and $T_1 , \dots , T_m \subseteq \mcal{P}_\kappa A$ are unbounded, where $m < \omega$, then $S\cap d_0(T_1)\cap \dots \cap d_0(T_m)$ is $1$-stationary in $\mcal{P}_\kappa A$. 
\end{fact}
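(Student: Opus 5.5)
The idea is to use Fact~\ref{fact:1-stat} to pass between $1$-stationarity and closure under functions, and to reduce the statement to the single case $m=1$.

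\textbf{Reduction to $m=1$.} Suppose the case $m=1$ is known: for every $1$-stationary $S$ and every unbounded $T$, the set $S\cap d_0(T)$ is $1$-stationary. Given unbounded $T_1,\dots,T_m$, apply this case $m$ times, replacing $S$ by $S\cap d_0(T_1)\cap\dots\cap d_0(T_i)$ at step $i$.

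\textbf{The case $m=1$.} Let $S' := S \cap d_0(T)$. To show $S'$ is $1$-stationary we must find, for each unbounded $T'\subseteq \mcal{P}_\kappa A$, some $x\in S'$ such that:
\begin{itemize}
\item $\mu := x\cap\kappa$ is weakly inaccessible, and
\item $T'\cap\mcal{P}_\mu x$ is unbounded in $\mcal{P}_\mu x$.
\end{itemize}
Since $x\in S'$ also requires $x\in d_0(T)$, we additionally need $T\cap\mcal{P}_\mu x$ to be unbounded in $\mcal{P}_\mu x$.

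\textbf{Merging $T$ and $T'$.} The trick is to combine the two unbounded sets into one that remembers both:
\[
T'' := \{ y\cup y' : y\in T,\ y'\in T' \}.
\]
This set is unbounded, since $T\subseteq$-cofinally covers any $z$ and so does $T'$. Applying $1$-stationarity of $S$ to $T''$ alone is not enough: from $T''\cap\mcal{P}_\mu x$ being unbounded we cannot recover that $T\cap\mcal{P}_\mu x$ is unbounded, because the witness $y\in T$ need not lie in $\mcal{P}_\mu x$. So instead we encode the two cofinality requirements by a function.

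\textbf{Encoding by a function.} Fix $g:\mcal{P}_\kappa A\to\mcal{P}_\kappa A$ such that for each $z$:
\begin{itemize}
\item $g(z)\supseteq y\cup y'$ for some $y\in T$ and $y'\in T'$ with $z\subseteq y$ and $z\subseteq y'$;
\item in fact $g(z)$ is chosen to be $y\cup y'$.
\end{itemize}
Then, writing $\mu = x\cap\kappa$:
\begin{itemize}
\item If $\mcal{P}_\mu x$ is closed under $g$, then for every $z\in\mcal{P}_\mu x$ we have $g(z)\in \mcal{P}_\mu x$. Hence the chosen $y\in T$ and $y'\in T'$ lie in $\mcal{P}_\mu x$ and extend $z$, so both $T\cap\mcal{P}_\mu x$ and $T'\cap\mcal{P}_\mu x$ are unbounded.
\item Closure under $g$ is exactly what strong stationarity provides: there is $x\in S$ with $x\cap\kappa$ inaccessible and $g[\mcal{P}_{x\cap\kappa}x]\subseteq\mcal{P}_{x\cap\kappa}x$.
\end{itemize}
When $\kappa$ is Mahlo, $S$ is strongly stationary by Fact~\ref{fact:1-stat}(2), so this settles the claim. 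In fact it gives more: $S'$ itself is strongly stationary, because closure under an arbitrary $f$ together with $g$ can be obtained by closing under a combined function $h(z) = f(z)\cup g(z)\cup$(suitable closures). Then Fact~\ref{fact:1-stat}(2) gives that $S'$ is $1$-stationary.

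\textbf{Main obstacle: $\kappa$ only weakly inaccessible.} In this case Fact~\ref{fact:1-stat}(2) is unavailable and we must work directly with the definition, so we need a single unbounded set $T^*$ whose reflection at $x$ forces reflection of both $T$ and $T'$. The plan is to use
\[
T^* := \{ w : w = y\cup y' \text{ with } y\in T\cap\mcal{P}(w)\text{ closed-off},\ \dots\},
\]
or, better, sets $w$ that code a pair of witnesses. Let $w\in T^*$ iff there are $y\in T$ and $y'\in T'$ with
\[
y, y' \subseteq w, \qquad |w| < \kappa,
\]
and $w$ is $\subseteq$-minimal in a suitable sense, such as $w = y\cup y'$ with $y\supseteq y'\cap\kappa$. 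The key point to verify is that if $T^*\cap\mcal{P}_\mu x$ is cofinal, then each $w\in T^*\cap\mcal{P}_\mu x$ has $w=y\cup y'$ with $y,y'\subseteq w\subseteq x$ and $|y|,|y'|<\mu$. Hence $y\in T\cap\mcal{P}_\mu x$ and $y'\in T'\cap\mcal{P}_\mu x$, each extending any prescribed $z\subseteq w$ provided we choose them to contain $z$. To make the second part work, define
\[
T^* := \{ y\cup y' : y\in T,\ y'\in T',\ y\cap y' \supseteq \text{?}\}
\]
with the requirement that $y\supseteq y'$'s relevant part, e.g.\ restrict to pairs with $y = y'$-cover. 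Concretely, we aim for $T^*=\{y\cup y': y\in T,\ y'\in T',\ y\supseteq y'\text{ or } y'\supseteq y\}$ with alternation ensuring both components are cofinal. This is the delicate step; if it fails, fall back on citing the general version from \cite{CLHZ, T}.
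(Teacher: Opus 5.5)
\paragraph{Gap: the weakly inaccessible case is not proved.} Your argument establishes the statement only when $\kappa$ is Mahlo, and the statement assumes much less. The encoding by $g$ is correct: if $\mcal{P}_\mu x$ is closed under $g$, then $T\cap\mcal{P}_\mu x$ and $T'\cap\mcal{P}_\mu x$ are both cofinal, and Fact~\ref{fact:1-stat}(2) supplies such an $x\in S$. But the hypothesis is only that $\kappa$ is weakly inaccessible, and a $1$-stationary set only forces $\kappa$ to be weakly Mahlo. In that case your last paragraph gives no proof. The candidate sets $T^*$ there are either not well defined (``closed-off'', ``$\supseteq\,$?'') or do not work. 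Take $T^*=\{y\cup y' : y\in T,\ y'\in T',\ y\subseteq y' \text{ or } y'\subseteq y\}$. A member $w=y\supseteq y'$ of $T^*\cap\mcal{P}_\mu x$ containing $z$ gives $z\subseteq y\in T$, but only $y'\subseteq y$ for the $T'$-witness. So nothing makes $T'\cap\mcal{P}_\mu x$ cofinal. Chains $y_0\subseteq y_1\subseteq\cdots$ alternating between $T$ and $T'$ fail for the same reason: $z\subseteq\bigcup_n y_n$ does not put $z$ inside any single $y_n$. (The paper itself does not prove this fact; it is quoted from \cite{CLHZ, T}.)

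\paragraph{The missing idea: disjoint coding.} Your diagnosis of why $T''$ fails is also off. If $y\cup y'\in\mcal{P}_\mu x$, then $y,y'\in\mcal{P}_\mu x$ automatically; the real problem is that $z\subseteq y\cup y'$ does not give $z\subseteq y$. The fix is to code the two witnesses into disjoint parts of $A$, so the test set tells you which part must cover it. Split $A=A_0\sqcup A_1$ with bijections $\sigma_i:A\to A_i$, and let
\[
T^*:=\{\sigma_0[y]\cup\sigma_1[y'] : y\in T,\ y'\in T'\},
\]
which is unbounded in $\mcal{P}_\kappa A$. Let $Z$ be the club of all $x\in\mcal{P}_\kappa A$ closed under $\sigma_0$, $\sigma_1$ and under extensions of $\sigma_0^{-1},\sigma_1^{-1}$ to $A$; then $\sigma_i[x]=x\cap A_i$ and $\sigma_i^{-1}[x\cap A_i]=x$. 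By Fact~\ref{fact:1-stat}(1), $S\cap Z$ is $1$-stationary. So some $x\in S\cap Z$ has $\mu=x\cap\kappa$ weakly inaccessible and $T^*\cap\mcal{P}_\mu x$ cofinal. Given $z\in\mcal{P}_\mu x$, we have $\sigma_0[z]\in\mcal{P}_\mu x$, so choose $\sigma_0[y]\cup\sigma_1[y']\in\mcal{P}_\mu x$ containing $\sigma_0[z]$. Since $\sigma_0[z]\subseteq A_0$ is disjoint from $\sigma_1[y']$, you get $z\subseteq y$. Also $y\subseteq\sigma_0^{-1}[x\cap A_0]=x$ and $|y|<\mu$. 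Thus $T\cap\mcal{P}_\mu x$ is cofinal, and symmetrically so is $T'\cap\mcal{P}_\mu x$. Hence $x\in S\cap d_0(T)$ is the required witness. This works for every weakly inaccessible $\kappa$ and uses no strong stationarity. Splitting $A$ into $m+1$ pieces handles $T_1,\dots,T_m$ at once, though your induction on $m$ is also fine.
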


\begin{corollary}\label{cor:d_1stat}
Suppose $\kappa$ is a weakly inaccessible cardinal, $\kappa \subseteq A$ and $\mcal{P}_\kappa A$ is $2$-stationary. If $S \subseteq \mcal{P}_\kappa A$ is $1$-staionary, then $d_{1}(S)$ is $1$-stationary in $\mcal{P}_\kappa A$.
\end{corollary}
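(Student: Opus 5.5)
We unfold the definition of $1$-stationarity directly. To show $d_1(S)$ is $1$-stationary, we must show: for every unbounded ($0$-stationary) $T \subseteq \mcal{P}_\kappa A$ there is $x \in d_1(S)$ with $\mu := x \cap \kappa$ weakly inaccessible and $T \cap \mcal{P}_\mu x$ unbounded in $\mcal{P}_\mu x$. In other words, we want $d_1(S) \cap d_0(T) \neq \emptyset$ for every unbounded $T$. (Membership in $d_1(S)$ already guarantees that $x \cap \kappa$ is weakly inaccessible.)

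Fix an unbounded $T$. By Fact \ref{fact:1stat_easy}, applied with the $1$-stationary set $\mcal{P}_\kappa A$ (which is $1$-stationary because it is $2$-stationary) and the single unbounded set $T$, the set $d_0(T)$ is $1$-stationary in $\mcal{P}_\kappa A$. More usefully, apply Fact \ref{fact:1stat_easy} with the $1$-stationary set $S$ itself: $S \cap d_0(T)$ is $1$-stationary. Now use the $2$-stationarity of $\mcal{P}_\kappa A$ with $m = 1$ and the $1$-stationary set $S' := S \cap d_0(T)$. This yields $x \in \mcal{P}_\kappa A$ with $\mu := x \cap \kappa$ weakly inaccessible and $S' \cap \mcal{P}_\mu x$ $1$-stationary in $\mcal{P}_\mu x$.

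Since $S' \subseteq S$, upward monotonicity of $1$-stationarity (clear from the definition: any witness in the smaller set lies in the larger) gives that $S \cap \mcal{P}_\mu x$ is $1$-stationary in $\mcal{P}_\mu x$, so $x \in d_1(S)$. It remains to see that $T \cap \mcal{P}_\mu x$ is unbounded in $\mcal{P}_\mu x$, i.e.\ $x \in d_0(T)$. Take any $a \in \mcal{P}_\mu x$. Since $S' \cap \mcal{P}_\mu x$ is $1$-stationary, it is in particular unbounded in $\mcal{P}_\mu x$, so there is $y \in S' \cap \mcal{P}_\mu x$ with $a \subseteq y$. As $y \in d_0(T)$, the ordinal $\nu := y \cap \kappa$ is weakly inaccessible and $T \cap \mcal{P}_\nu y$ is unbounded in $\mcal{P}_\nu y$. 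Because $|a| < \mu$ and $a \subseteq y$, we need $|a| < \nu$ to find $z \in T \cap \mcal{P}_\nu y$ with $a \subseteq z$. Then $z \subseteq y \subseteq x$ and $|z| < \nu \le \mu$, so $z \in T \cap \mcal{P}_\mu x$ extends $a$.

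The main obstacle is guaranteeing $|a| < \nu$. To handle it, first enlarge $a$ to $a' := a \cup (|a|+1)$, which is still in $\mcal{P}_\mu x$ because $\mu \subseteq x$ is a limit cardinal. Then $a' \subseteq y$ forces $|a|+1 \subseteq y \cap \kappa = \nu$, so $|a| < \nu$. Alternatively, one can appeal to the fact that unboundedness only ever needs to be tested on sets $a$ with $a \cap \kappa$ an ordinal above $|a|$. With this, $x \in d_1(S) \cap d_0(T)$, completing the argument.
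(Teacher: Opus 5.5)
Your proof is correct and is essentially the paper's argument: apply Fact~\ref{fact:1stat_easy} to get $S \cap d_0(T)$ $1$-stationary, use the $2$-stationarity of $\mcal{P}_\kappa A$, and conclude via $\emptyset \neq d_1(S \cap d_0(T)) \subseteq d_1(S) \cap d_1(d_0(T)) \subseteq d_1(S) \cap d_0(T)$. The only difference is that you explicitly verify the inclusion $d_1(d_0(T)) \subseteq d_0(T)$, which the paper uses without comment; this includes the cardinality point $|a| < y \cap \kappa$, which you handle by enlarging $a$ to $a \cup (|a|+1)$.
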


%-------------------------------------------------------------------------------------

\begin{proof} Let $S \subseteq \mcal{P}_\kappa A$ be $1$-stationary. To prove that $d_1 (S)$ is $1$-stationary, take an arbitrary unbounded $T \subseteq \mcal{P}_\kappa A$. We prove that $d_1(S) \cap d_0(T) \neq \emptyset$.
By Fact \ref{fact:1stat_easy}, we have that $S \cap d_0(T)$ is $1$-stationary in $\mcal{P}_\kappa A$. Then, by $2$-stationarity of $\mcal{P}_\kappa A$, we have $\emptyset \neq d_1 ( S \cap d_0 (T) ) \subseteq d_1 (S) \cap d_1( d_0 (T) ) \subseteq  d_1(S) \cap d_0(T)$.
\end{proof}

%--------------------------------------------------------------------------------------------------------------------------------------------------------------------------------------------------------------------------------------------------------
\section{Menas' Theorem for $1$-stationary and $2$-stationary sets} \label{sec:Menas}
%--------------------------------------------------------------------------------------------------------------------------------------------------------------------------------------------------------------------------------------------------------

In this section, we discuss Menas' theorem for $1$-stationary and $2$-stationary sets. We begin by recalling the original theorem and introducing the necessary notation.

Suppose $\kappa$ is a regular uncountable cardinal, $\kappa \subseteq A \subseteq B$, $S \subseteq \mcal{P}_{\kappa}A$ and $T \subseteq \mcal{P}_{\kappa}B$. The lifting $S \uparrow B$ of $S$ to $\mcal{P}_{\kappa}B$ and the projection $T \downarrow A$ of $T$ to $\mcal{P}_\kappa A$ are defined as follows.
\[
S \uparrow B := \{ y \in \mcal{P}_{\kappa}B : y \cap A \in S \}, \hspace{20pt}
T \downarrow A := \{ y \cap A : y \in T \}.
\]
 
The following theorem, due to Menas \cite{Me}, describes how stationarity behaves under projection and lifting.

\begin{theorem}[Menas' Theorem]\label{menas}
Let $\kappa$ be a regular uncountable cardinal, and let $\lambda , \lambda '$ be cardinals with $\kappa \leq \lambda < \lambda'$. Suppose $S \subseteq \mcal{P}_{\kappa}\lambda$. Then $S$ is stationary in $\mcal{P}_{\kappa}\lambda$ if and only if $S \uparrow \lambda'$ is stationary in $\mcal{P}_{\kappa}{\lambda'}$.
\end{theorem}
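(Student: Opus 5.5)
We prove the two directions separately, working throughout with the characterization of stationarity by closure under functions $f : B^{<\omega} \to \mcal{P}_\kappa B$ recalled in \S\ref{sec:preliminaries}. Since $|\lambda'^{<\omega}| = \lambda'$, a single function suffices.

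The direction ``$S \uparrow \lambda'$ stationary $\Rightarrow$ $S$ stationary'' is routine. Given a club $C \subseteq \mcal{P}_\kappa \lambda$, the set
\[
C \uparrow \lambda' = \{ y \in \mcal{P}_\kappa \lambda' : y \cap \lambda \in C \}
\]
contains a club. Indeed, fix $f : \lambda^{<\omega} \to \mcal{P}_\kappa \lambda$ such that every $x \in \mcal{P}_\kappa\lambda$ closed under $f$ (i.e.\ $f(a) \subseteq x$ for $a \in x^{<\omega}$) lies in $C$. This uses the converse part of the standard fact: every club contains the set of $f$-closed points for some $f$. Extend $f$ to $\lambda'^{<\omega}$ by letting it be $\emptyset$ off $\lambda^{<\omega}$. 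If $y$ is closed under the extension, then $y \cap \lambda$ is closed under $f$, so $y \cap \lambda \in C$. Hence $S \uparrow \lambda'$ meets $C \uparrow \lambda'$, which means $S$ meets $C$.

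The substantive direction is ``$S$ stationary $\Rightarrow$ $S \uparrow \lambda'$ stationary''. Fix $F : \lambda'^{<\omega} \to \mcal{P}_\kappa \lambda'$. We must find $y$ closed under $F$ with $y \cap \lambda \in S$. The plan is to push $F$ down to a function $g$ on $\lambda^{<\omega}$ such that every $g$-closed $x \in \mcal{P}_\kappa\lambda$ is of the form $y \cap \lambda$ for some $F$-closed $y$.

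Here is the construction. For $x \in \mcal{P}_\kappa \lambda$, let $\mathrm{cl}_F(x)$ be the closure of $x$ under $F$. It is obtained in $\omega$ steps, each of size $<\kappa$, using regularity of $\kappa$. We need $\mathrm{cl}_F(x) \cap \lambda = x$, and we arrange this by choosing $g$ to code, for each finite $a \subseteq \lambda$, the set $\mathrm{cl}_F(a) \cap \lambda$. The difficulty is that the closure of a union is not the union of closures of its finite subsets.

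To handle this, we use Skolem-style terms. Every element of $\mathrm{cl}_F(x)$ is generated from a finite subset of $x$ by finitely many applications of $F$, so we replace $F$ by its finitary version. Since $|F(a)| < \kappa$, fix enumerations $F(a) = \{ F_\xi(a) : \xi < \kappa\}$ (with repetitions allowed). Let $\mathcal{T}$ be the set of finite terms built from the functions $(\xi, a) \mapsto F_\xi(a)$, where the indices $\xi$ come from $x \cap \kappa$. Closure of $y$ under all such terms with $\xi \in y\cap\kappa$ gives $F$-closure, provided $y \cap \kappa \in \kappa$; this is ensured because the $x$ in question come from a club where $x\cap\kappa$ is an ordinal. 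Now define $g(a)$ for finite $a \subseteq \lambda$ to be the set of all values in $\lambda$ of terms applied to $a$ (indices taken from $a$). There are countably many term shapes, so $|g(a)| < \kappa$. If $x$ is $g$-closed and $x\cap\kappa\in\kappa$, set $y$ equal to the set of term values over $x$. Then $y$ is $F$-closed, $|y|<\kappa$, and $y \cap \lambda = x$. Since $S$ is stationary, some $x \in S$ is $g$-closed with $x\cap\kappa\in\kappa$, and the corresponding $y$ lies in $S\uparrow\lambda'$ and is closed under $F$.

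The main obstacle is the bookkeeping with indices $\xi<\kappa$. The point is that an index is used only when it already lies in $y\cap\kappa=x\cap\kappa$, which is an ordinal, so that $F(a)\subseteq y$ follows from $F(a)=\{F_\xi(a):\xi<\kappa\}$ together with $|F(a)|<x\cap\kappa$. This last inequality fails in general. To fix it, we additionally require that $x\cap\kappa \supseteq |F(a)|$, by adding $|F(a)|+1$ (an ordinal $<\kappa$) into $g(a)$. Since $|F(a)|<\kappa$, we may enumerate $F(a)$ in order type $|F(a)|$, and then indices below $|F(a)|\subseteq x$ cover $F(a)$.
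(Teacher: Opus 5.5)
The paper does not prove this statement; it quotes it from Menas \cite{Me}, so there is no in-paper argument to compare with. Judged on its own, your downward direction is fine. Your upward direction is correct in the end, but it takes a detour caused by a false premise, and the patch needs to be stated more carefully than you do.

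The difficulty you name does not exist. Closure under a finitary $F$ is algebraic:
\[
\mathrm{cl}_F(x)=\bigcup\{\mathrm{cl}_F(a) : a\in[x]^{<\omega}\}.
\]
The right-hand side contains $x$, and it is $F$-closed: finitely many of its elements lie in $\mathrm{cl}_F(a_1)\cup\dots\cup\mathrm{cl}_F(a_k)\subseteq\mathrm{cl}_F(a_1\cup\dots\cup a_k)$. Your own sentence ``every element of $\mathrm{cl}_F(x)$ is generated from a finite subset of $x$'' says exactly this.

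So your first plan already works. Let $g(a):=\mathrm{cl}_F(a)\cap\lambda$, and take $x\in S$ that is $g$-closed. Then $y:=\mathrm{cl}_F(x)$ has size $<\kappa$ by regularity of $\kappa$, is $F$-closed, and satisfies $y\cap\lambda=\bigcup_a g(a)=x$. This is the standard argument, and it needs neither terms nor the condition $x\cap\kappa\in\kappa$.

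The term version, by contrast, is wrong as first stated. As you notice, closure under $F_\xi$ for $\xi\in y\cap\kappa$ does not exhaust a length-$\kappa$ enumeration of $F(b)$. Your patch, adding $|F(a)|+1$ to $g(a)$ for finite $a\subseteq\lambda$, is not enough as written. You need $|F(b)|\in y\cap\kappa=x\cap\kappa$ for \emph{every} finite $b\subseteq y$, including those with elements in $\lambda'\setminus\lambda$. So $b\mapsto|F(b)|$ must itself be one of the term-forming operations. You also need enumerations of $F(b)$ of length exactly $|F(b)|$, extended by a fixed default value beyond that (which also covers $F(b)=\emptyset$). With these repairs the argument goes through, but it gains nothing over $g(a)=\mathrm{cl}_F(a)\cap\lambda$.
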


Let $\kappa$, $\lambda$ and $\lambda '$ be as above, and suppose that $T \subseteq \mcal{P}_\kappa \lambda '$. Then $( T \downarrow \lambda ) \uparrow \lambda ' \supseteq T$. Thus, it follows that if $T$ is stationary in $\mcal{P}_\kappa \lambda '$, then $T \downarrow \lambda$ is stationary in $\mcal{P}_\kappa \lambda$.

In Menas' theorem, we refer to the ``if'' direction and the ``only if'' direction as the downward direction and the upward direction, respectively. We study these directions for $1$-stationary and $2$-stationary sets in the following subsections. In \S \ref{sec:Menas_1stat}, we show that Menas' Theorem for $1$-stationary sets holds if $\kappa$ is Mahlo. In \S \ref{sec:Means_2stat} we  consider the case of $2$-stationary sets. We show that the downward direction holds if $\kappa$ is Mahlo, whereas the upward direction can fail even if $\kappa$ is Mahlo.

%--------------------------------------------------------------------------------------------------------------------------------------------------------------------------------------------------------------------------------------------------------
\subsection{Menas' Theorem for $1$-stationary sets} \label{sec:Menas_1stat}
%--------------------------------------------------------------------------------------------------------------------------------------------------------------------------------------------------------------------------------------------------------

In this section, we address both directions of Menas' Theorem for $1$-stationary sets. We first prove the downward direction, which holds for any $\kappa$ weakly Mahlo cardinal. We then consider the upward direction, showing that it holds when $\kappa$ is Mahlo. We do not know whether this assumption is necessary.

\begin{prop}\label{mrtl}
Let $\kappa$ be a weakly Mahlo cardinal and let $\lambda , \lambda'$ be cardinals with $\kappa \leq \lambda < \lambda '$. Suppose $S \subseteq \mcal{P}_\kappa \lambda$. If $S \uparrow \lambda '$ is $1$-stationary in $\mcal{P}_{\kappa}\lambda'$, then $S$ is $1$-stationary in $\mcal{P}_{\kappa}{\lambda}$.
\end{prop}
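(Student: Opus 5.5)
We work directly from the definition of $1$-stationarity. Fix an unbounded $T \subseteq \mcal{P}_\kappa \lambda$; we must find $x \in S$ such that $\mu := x \cap \kappa$ is weakly inaccessible and $T \cap \mcal{P}_\mu x$ is unbounded in $\mcal{P}_\mu x$. The plan is to lift $T$ to $\mcal{P}_\kappa \lambda'$ and apply the hypothesis there. Put
\[
T' := T \uparrow \lambda' = \{ y \in \mcal{P}_\kappa \lambda' : y \cap \lambda \in T \}.
\]
First I check that $T'$ is unbounded in $\mcal{P}_\kappa \lambda'$. Given $z \in \mcal{P}_\kappa \lambda'$, choose $t \in T$ with $z \cap \lambda \subseteq t$. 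Then $y := t \cup z$ lies in $T'$ and contains $z$, since $y \cap \lambda = t$.

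Since $S \uparrow \lambda'$ is $1$-stationary, there is $y \in S \uparrow \lambda'$ such that $\mu := y \cap \kappa$ is weakly inaccessible and $T' \cap \mcal{P}_\mu y$ is unbounded in $\mcal{P}_\mu y$. Set $x := y \cap \lambda$. Then $x \in S$, and since $\kappa \subseteq \lambda$, we have $x \cap \kappa = y \cap \kappa = \mu$. It remains to verify that $T \cap \mcal{P}_\mu x$ is unbounded in $\mcal{P}_\mu x$.

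Let $a \in \mcal{P}_\mu x$. Then $a \in \mcal{P}_\mu y$, so there is $w \in T' \cap \mcal{P}_\mu y$ with $a \subseteq w$. Now $w \cap \lambda \in T$ by the definition of $T'$. Moreover $w \cap \lambda \subseteq y \cap \lambda = x$ and $|w \cap \lambda| < \mu$, so $w \cap \lambda \in \mcal{P}_\mu x$. Finally $a \subseteq w \cap \lambda$ because $a \subseteq x \subseteq \lambda$. This finishes the argument.

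There is no real obstacle here; the only point needing care is the easy observation that the lift of an unbounded set is unbounded and that projection preserves the required cofinality inside $\mcal{P}_\mu y$. (Weak Mahloness is needed only so that the notion is non-vacuous.)
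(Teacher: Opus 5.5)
Your proposal is correct and follows essentially the same route as the paper: lift $T$ to $T \uparrow \lambda'$, apply the $1$-stationarity of $S \uparrow \lambda'$ to get a point $x'$, and project to $x := x' \cap \lambda$. The only difference is that you verify explicitly the two facts the paper leaves to the reader, namely that the lift of an unbounded set is unbounded and that unboundedness of $(T \cap \mcal{P}_\mu x) \uparrow x'$ yields unboundedness of $T \cap \mcal{P}_\mu x$.
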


\begin{proof}
Suppose that $S \uparrow \lambda '$ is $1$-stationary in $\mcal{P}_{\kappa}{\lambda'}$. To see that $S$ is $1$-stationary in $\mcal{P}_\kappa \lambda$, take an arbitrary unbounded $T \subseteq \mcal{P}_\kappa \lambda$. We must find $x \in S$ such that $\mu = x \cap \kappa$ is weakly inaccessible, and $T \cap \mcal{P}_\mu x$ is unbounded.

First note that $T \uparrow \lambda '$ is unbounded in $\mcal{P}_\kappa \lambda '$. Since $S \uparrow \lambda '$ is $1$-stationary, we can take $x' \in S \uparrow \lambda '$ such that $\mu := x' \cap \kappa$ is weakly inaccessible and the set $X' := ( T \uparrow \lambda ' ) \cap \mcal{P}_\mu x'$ is unbounded. Let $x := x' \cap \lambda$. Then $x \in S$, and $x \cap \kappa = \mu$ is weakly inaccessible. Let $X := T \cap \mcal{P}_\mu x$. Note that $X' = X \uparrow x'$. Then it follows from the unboundedness of $X'$ that $X$ is unbounded in $\mcal{P}_\mu x$. So $x$ is as desired.
\end{proof}  %---------------------------------------------------------------

We now turn to the upward direction. We first consider the case $\lambda^{<\kappa} = \lambda$.

\begin{lemma}\label{mltrr}
Let $\kappa$ be a weakly Mahlo cardinal and let $\lambda , \lambda '$ be cardinals with $\kappa \leq \lambda < \lambda'$ and $\lambda^{<\kappa}=\lambda$. Suppose $S \subseteq \mcal{P}_{\kappa}\lambda$. If $S$ is $1$-stationary in $\mcal{P}_{\kappa}\lambda$, then $S \uparrow \lambda'$ is $1$-stationary in $\mcal{P}_{\kappa}{\lambda'}$.
\end{lemma}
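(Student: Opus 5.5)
The plan is to fix an arbitrary unbounded $T' \subseteq \mcal{P}_\kappa \lambda'$ and find $x' \in S \uparrow \lambda'$ such that $\mu := x' \cap \kappa$ is weakly inaccessible and $T' \cap \mcal{P}_\mu x'$ is unbounded in $\mcal{P}_\mu x'$. The idea is to transfer $T'$ down to an unbounded subset of $\mcal{P}_\kappa \lambda$ in a way that remembers enough of $T'$. We then apply the $1$-stationarity of $S$ in $\mcal{P}_\kappa\lambda$ to get some $x \in S$, and lift $x$ to an $x' \supseteq x$ with $x' \cap \lambda = x$. The hypothesis $\lambda^{<\kappa} = \lambda$ is what makes this coding possible: fix a bijection $e : \mcal{P}_\kappa \lambda \to \lambda$.

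First, build a directed system of witnesses in $T'$ indexed by $\mcal{P}_\kappa\lambda$. Take $\theta$ large and a well-order $\lhd$ of $H_\theta$, and let $\mathfrak{A} = (H_\theta, \in, \lhd, T', e, \lambda, \lambda')$. For $z \in \mcal{P}_\kappa \lambda$, let $N_z$ be the Skolem hull of $z$ in $\mathfrak{A}$ (of size $<\kappa$). Then let $g(z)$ be the $\lhd$-least element of $T'$ containing $N_z \cap \lambda'$. Since $T' \in N_z$, elementarity gives $g(z) \in N_z$, and also $g(z) \subseteq N_z$ whenever $N_z \cap \kappa$ is an ordinal $>|g(z)|$. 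Define $h(z) := N_z \cap \lambda$. The point is that $g$ and $N$ are monotone in $z$: if $z_0 \subseteq z_1$ then $N_{z_0} \subseteq N_{z_1}$, and $g(z_0) \subseteq N_{z_1}$ on a club.

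Second, choose the set against which to apply $1$-stationarity. Let $T := \{ z \in \mcal{P}_\kappa \lambda : z = h(z) \}$. This set is club, hence unbounded, in $\mcal{P}_\kappa\lambda$. By Fact \ref{fact:1-stat}(1) we may also intersect $S$ with the club of $x$ closed under $h$ and under $e$, $e^{-1}$ (on small sets), and still have a $1$-stationary set. Applying the definition gives $x \in S$ such that $\mu = x\cap\kappa$ is weakly inaccessible, $T \cap \mcal{P}_\mu x$ is unbounded in $\mcal{P}_\mu x$, and $x$ has these closure properties. Now set
\[
x' := \bigcup \{ N_z \cap \lambda' : z \in T \cap \mcal{P}_\mu x \}.
\]
Since $h(z) = z \subseteq x$ for such $z$, we get $x' \cap \lambda = x$, so $x' \in S\uparrow\lambda'$. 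Also $x'\cap\kappa = \mu$ and $|x'| < \kappa$ (as $|\mcal{P}_\mu x| < \kappa$, using $\lambda^{<\kappa}=\lambda$ and the closure of $x$ under $e$ to bound this).

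Third, verify unboundedness. Given $w \in \mcal{P}_\mu x'$, each point of $w$ lies in some $N_z$ with $z \in T\cap\mcal{P}_\mu x$. There are fewer than $\mu$ such $z$, so by regularity of $\mu$ and unboundedness of $T \cap \mcal{P}_\mu x$ there is a single $z^* \in T\cap\mcal{P}_\mu x$ containing them all. Monotonicity then gives $w \subseteq N_{z^*}\cap\lambda' \subseteq g(z^*)$, and $g(z^*) \in T'$. It remains to check $g(z^*) \in \mcal{P}_\mu x'$, i.e.\ $g(z^*) \subseteq x'$ and $|g(z^*)|<\mu$. The main obstacle is exactly this step: ensuring $g(z) \subseteq N_{z'}$ for some larger $z' \in T\cap\mcal{P}_\mu x$, which needs $|g(z)| < N_{z'}\cap\kappa$. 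I would handle it by adding to $T$ the requirement that $N_z \cap \kappa$ is an ordinal containing $|g(z_0)|$ for all $z_0\subseteq z$ coded in $z$ via $e$. This keeps $T$ club, and choosing $z'\supseteq z\cup\{e(z)\}$ in $T\cap\mcal{P}_\mu x$ then absorbs $g(z)$.
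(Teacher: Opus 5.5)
There is a genuine gap, and it sits exactly at the step you flag: nothing in your construction makes $T'\cap\mcal{P}_\mu x'$ unbounded.

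\begin{itemize}
\item Your claim that $g(z)\in N_z$ ``by elementarity'' is false. $g(z)$ is defined from the parameter $N_z\cap\lambda'$, and this set is never an element of $N_z$ once $\delta:=N_z\cap\kappa$ is an ordinal: otherwise $\delta=\sup(N_z\cap\lambda'\cap\kappa)\in N_z$. The same argument shows that $g(z)\in N_z$ and $|g(z)|<\delta$ cannot hold together.
\item For the same reason, $e(z)\in z'$ does not give $g(z)\in N_{z'}$. That would need $g$ (or $\beta\mapsto g(e^{-1}(\beta))$) to be available in $\mathfrak{A}$. But $g$ is defined from the hulls of $\mathfrak{A}$ itself, so adding it to $\mathfrak{A}$ is circular.
\item Your fix also needs $e(z^*)\in x$ for the $z^*\in T\cap\mcal{P}_\mu x$ you pick. ``$x$ is closed under $e$ on small subsets'' is not a club condition, so Fact \ref{fact:1-stat}(1) does not let you impose it. It is not preserved by unions of chains. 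If demanded for all $z\in\mcal{P}_\mu x$, it forces $|x|^{<\mu}\le|x|$, which fails for every $x$ with $x\cap\kappa$ uncountable when $2^{\aleph_0}\ge\kappa$ (allowed, as $\kappa$ is only weakly Mahlo). Nothing else in your setup yields cofinally many $z\in\mcal{P}_\mu x$ coded in $x$.
\item A minor point: the bound $|x'|<\kappa$ is true, but the reason is $x'\subseteq N_x$, the hull of $x$. Your route through $|\mcal{P}_\mu x|<\kappa$ is unjustified.
\end{itemize}

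Your overall plan (pull $T'$ down, apply $1$-stationarity in $\mcal{P}_\kappa\lambda$, lift $x$) is sound. The missing idea is to make the pullback exact, via a bijection, rather than loose, via hulls.
\begin{itemize}
\item Using $\lambda^{<\kappa}=\lambda$, close $\lambda$ for $\kappa$ steps under a choice of witnesses $a\mapsto t_a\in T'$ with $a\subseteq t_a$. This gives $A\subseteq\lambda'$ with $\lambda\subseteq A$, $|A|=\lambda$, and $T'\cap\mcal{P}_\kappa A$ unbounded in $\mcal{P}_\kappa A$. Fix a bijection $\pi:\lambda\to A$.
\item Then $T:=\{\pi^{-1}[t]:t\in T'\cap\mcal{P}_\kappa A\}$ is unbounded in $\mcal{P}_\kappa\lambda$, and $\{x\in\mcal{P}_\kappa\lambda:\pi[x]\cap\lambda=x\}$ is club.
\item Apply $1$-stationarity of $S$, shrunk to this club, to $T$ to get $x$. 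Then $x':=\pi[x]$ works outright: $x'\cap\lambda=x\in S$, $x'\cap\kappa=\mu$, and $T'\cap\mcal{P}_\mu x'=\{\pi[u]:u\in T\cap\mcal{P}_\mu x\}$ is unbounded.
\end{itemize}
This is the paper's proof. There it is phrased as transferring $S$ to $\mcal{P}_\kappa A$ by Remark \ref{rmk:base_set}, intersecting with the club $\{z\in\mcal{P}_\kappa A:\pi^{-1}[z]=z\cap\lambda\}$, and applying $1$-stationarity directly to $T'\cap\mcal{P}_\kappa A$.
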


%----------------------------------------------------------------------------------

\begin{proof}
Suppose $S$ is $1$-stationary in $\mcal{P}_\kappa \lambda$, and $T \subseteq \mcal{P}_\kappa \lambda '$ is unbounded. We find $x' \in S \uparrow \lambda '$ such that $x' \cap \kappa$ is weakly inaccessible, and $T \cap \mcal{P}_{x' \cap \kappa} x'$ is unbounded.

Since $\lambda^{< \kappa}= \lambda$, we can easily take $A \subseteq \lambda '$ such that $|A| = \lambda \subseteq A$ and $T \cap \mcal{P}_\kappa A$ is unbounded. Take a bijection $\pi: \lambda \rightarrow A$. Then $S^* := \{ \pi [x] : x \in S \}$ is $1$-stationary in $\mcal{P}_{\kappa}{A}$ by Remark \ref{rmk:base_set}. Here note that $Z := \{ z \in \mcal{P}_\kappa A :\pi^{-1} [z] = z \cap \lambda \}$ is club in $\mcal{P}_\kappa A$. Then $S^{**} := S^* \cap Z$ is $1$-stationary in $\mcal{P}_\kappa A$ by Fact \ref{fact:1-stat}.

Since $T \cap \mcal{P}_\kappa A$ is unbounded, and $S^{**}$ is $1$-stationary in $\mcal{P}_\kappa A$, we can take $x' \in S^{**}$ such that $x' \cap \kappa$ is weakly inaccessible, and $T \cap \mcal{P}_{x' \cap \kappa} x'$ is unbounded. Moreover, since $x' \in S^{**} = S^* \cap Z$, we have $x' \cap \lambda = \pi^{-1} [x'] \in S$. So $x' \in S \uparrow A \subseteq S \uparrow \lambda '$. Thus $x'$ is as desired.
\end{proof}

Next, we consider the lifting of $1$-stationary subsets of $\mcal{P}_\kappa \lambda$ to $\mcal{P}_\kappa ( \lambda^{< \kappa} )$. For this we assume that $\kappa$ is Mahlo.

\begin{lemma} \label{lift_lambda_lambda<kappa}
Let $\kappa$ be a Mahlo cardinal and let $\lambda$ be a cardinal $\geq \kappa$. Suppose $S \subseteq \mcal{P}_\kappa \lambda$. If $S$ is $1$-stationary in $\mcal{P}_\kappa \lambda$, then $S \uparrow \lambda^{< \kappa}$ is $1$-stationary in $\mcal{P}_\kappa ( \lambda^{< \kappa} )$.
\end{lemma}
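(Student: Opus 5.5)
We use Fact \ref{fact:1-stat} (2): since $\kappa$ is Mahlo, $1$-stationarity coincides with strong stationarity, both in $\mcal{P}_\kappa \lambda$ and in $\mcal{P}_\kappa \theta$, where $\theta := \lambda^{<\kappa}$. So let $S \subseteq \mcal{P}_\kappa \lambda$ be strongly stationary and let $f : \mcal{P}_\kappa \theta \to \mcal{P}_\kappa \theta$ be arbitrary. We must find $x^* \in \mcal{P}_\kappa \theta$ with three properties: $x^* \cap \lambda \in S$, $\mu := x^* \cap \kappa$ is inaccessible, and $\mcal{P}_\mu x^*$ is closed under $f$. The idea is to code $\theta$ by $\mcal{P}_\kappa \lambda$. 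Then $x^*$ will be the code of $\mcal{P}_\mu x$ for a suitable $x \in S$.

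\textbf{The coding.} Since $|\mcal{P}_\kappa \lambda| = \theta$, fix a bijection $e : \theta \to \mcal{P}_\kappa \lambda$ with $e(\alpha) = \{\alpha\}$ for all $\alpha < \lambda$. This is possible because the singletons form a subset of size $\lambda$ whose complement has size $\theta$. For $x \in \mcal{P}_\kappa \lambda$ with $\mu = x \cap \kappa$ inaccessible, put
\[
x^* := e^{-1}[\mcal{P}_\mu x].
\]
Because $\kappa$ is inaccessible, $|\mcal{P}_\mu x| \le |x|^{<\mu} < \kappa$, so $x^* \in \mcal{P}_\kappa \theta$. By the choice of $e$, an ordinal $\alpha < \lambda$ lies in $x^*$ iff $\{\alpha\} \in \mcal{P}_\mu x$ iff $\alpha \in x$. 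Hence $x^* \cap \lambda = x$, and so $x^* \cap \kappa = \mu$. In particular, $x \in S$ gives $x^* \in S \uparrow \theta$.

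\textbf{Transferring closure.} Define $F : \mcal{P}_\kappa \lambda \to \mcal{P}_\kappa \lambda$ as follows. For $a \in \mcal{P}_\kappa \lambda$, let $F(a)$ be $a$ together with the union, over all $Y \subseteq \mcal{P}(a)$ with $Y \subseteq \mcal{P}_\kappa\lambda$, of
\[
\bigcup e[f(e^{-1}[Y])] \;\cup\; \{ |f(e^{-1}[Y])| \}.
\]
The second part is an ordinal below $\kappa \le \lambda$. There are at most $2^{2^{|a|}} < \kappa$ such $Y$, and each contributes fewer than $\kappa$ elements, so $F(a) \in \mcal{P}_\kappa \lambda$ by the regularity and inaccessibility of $\kappa$. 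Strong stationarity of $S$ then yields $x \in S$ with $\mu = x \cap \kappa$ inaccessible and $\mcal{P}_\mu x$ closed under $F$.

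Now we check that $\mcal{P}_\mu x^*$ is closed under $f$. Let $y \in \mcal{P}_\mu x^*$ and set $Y := e[y]$, a family of fewer than $\mu$ members of $\mcal{P}_\mu x$. Then $a := \bigcup Y \in \mcal{P}_\mu x$ by the regularity of $\mu$, and $Y \subseteq \mcal{P}(a)$, so $F(a) \in \mcal{P}_\mu x$.
\begin{itemize}
\item The size bound: $|f(y)| \in F(a) \cap \kappa \subseteq x \cap \kappa = \mu$, so $|f(y)| < \mu$.
\item The elements: every $z \in e[f(y)]$ satisfies $z \subseteq F(a)$, so $z \subseteq x$ and $|z| \le |F(a)| < \mu$, i.e. $z \in \mcal{P}_\mu x$. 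Thus $f(y) \subseteq e^{-1}[\mcal{P}_\mu x] = x^*$.
\end{itemize}
Together these give $f(y) \in \mcal{P}_\mu x^*$, as required.

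The main obstacle is designing $F$ so that it anticipates \emph{all} small subsets of $x^*$, not just those coming from $\mcal{P}_\mu x$. This is handled by quantifying over all $Y \subseteq \mcal{P}(a)$, with $a$ the union of the coded sets, and it is exactly here that inaccessibility of $\kappa$ (from the Mahloness assumption) is used: it keeps $F(a)$ small, and it keeps $|\mcal{P}_\mu x| < \kappa$.
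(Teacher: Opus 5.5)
Your argument is correct and is essentially the paper's proof. Your $x^* = e^{-1}[\mcal{P}_\mu x]$ is the paper's $\pi^*(x) = \pi[\mcal{P}_{x \cap \kappa} x]$. Normalizing $e$ on singletons replaces the paper's auxiliary functions $f_0, f_1$, which force $\pi^*(x) \cap \lambda = x$. Quantifying over all $Y \subseteq \mcal{P}(a)$ plays the role of the paper's replacement of $h$ by a $\subseteq$-increasing function. You also record $|f(y)|$ in $F(a)$, where the paper instead uses that $\mu$ is a strong limit.

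There is one small slip. A bijection $e : \theta \to \mcal{P}_\kappa \lambda$ with $e(\alpha) = \{\alpha\}$ for all $\alpha < \lambda$ exists only when $\theta > \lambda$; if $\theta = \lambda$ then $\theta \setminus \lambda = \emptyset$. In that case $\lambda^{<\kappa} = \lambda$, and the statement is trivial because $S \uparrow \lambda = S$.
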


\begin{proof}
Let $\nu := \lambda^{< \kappa}$. Suppose $S$ is strongly stationary in $\mcal{P}_\kappa \lambda$. We prove that $S \uparrow \nu$ is strongly stationary in $\mcal{P}_\kappa \nu$. Take an arbitrary $h : \mcal{P}_\kappa \nu \to \mcal{P}_\kappa \nu$. We find $y \in S \uparrow \nu$ such that $y \cap \kappa$ is inaccessible, and $\mcal{P}_{y \cap \kappa} y$ is closed under $h$.

By shrinking $S$ if necessary, we may assume that $x \cap \kappa$ is inaccessible for all $x \in S$. Since $\kappa$ is inaccessible, by replacing $h(w)$ with $\bigcup_{w' \subseteq w} h(w')$ for each $w \in \mcal{P}_\kappa \nu$, we may also assume that $h$ is $\subseteq$-increasing, i.e.~if $w' \subseteq w$, then $h(w') \subseteq h(w)$.

Take a bijection $\pi : \mcal{P}_\kappa \lambda \to \nu$. For each $x \in S$ let $\pi^* (x) := \pi [ \mcal{P}_{x \cap \kappa} x ]$. Note that $| \pi^* (x) | < \kappa$ since $\kappa$ is inaccessible. So $\pi^* (x) \in \mcal{P}_\kappa \nu$. We will find $x \in S$ such that $\pi^* (x) \cap \lambda = x$, and $\mcal{P}_{x \cap \kappa} \pi^* (x)$ is closed under $h$. Then $y := \pi^* (x)$ will be as desired.

We define functions $f_0 , f_1 , f_2 : \mcal{P}_\kappa \lambda \to \mcal{P}_\kappa \lambda$ so that if $x \in S$, and $\mcal{P}_{x \cap \kappa} x$ is closed under all $f_0 , f_1 , f_2$, then $x$ is as desired.

First define $f_0 : \mcal{P}_\kappa \lambda \to \mcal{P}_\kappa \lambda$ as follows: $f_0 (z) := \{ \pi (z) \}$ if $\pi (z) \in \lambda$, and $f_0 (z) := \emptyset$ otherwise. Then we have the following.
\begin{itemize}
\item[(i)] If $x \in S$, and $\mcal{P}_{x \cap \kappa} x$ is closed under $f_0$, then $\pi^* (x) \cap \lambda \subseteq x$.
\end{itemize}
Take $f_1 : \mcal{P}_\kappa \lambda \to \mcal{P}_\kappa \lambda$ such that $f_1 ( \{ \alpha \} ) = \pi^{-1} ( \alpha )$. Then we have the following.
\begin{itemize}
\item[(ii)] If $x \in S$, and $\mcal{P}_{x \cap \kappa} x$ is closed under $f_1$, then $\pi^* (x) \cap \lambda \supseteq x$.
\end{itemize}
Finally, define $f_2 : \mcal{P}_\kappa \lambda \to \mcal{P}_\kappa \lambda$ by
\[
f_2 (z) := \bigcup \pi^{-1} [ h( \pi [ \mcal{P} (z) ] ) ] \, .
\]
We claim the following.
\begin{itemize}
\item[(iii)] If $x \in S$, and $\mcal{P}_{x \cap \kappa} x$ is closed under $f_2$, then $\mcal{P}_{x \cap \kappa} \pi^* (x)$ is closed under $h$.
\end{itemize}

Suppose $x \in S$ and $\mcal{P}_{x \cap \kappa} x$ is closed under $f_2$. Take an arbitrary $w \in \mcal{P}_{x \cap \kappa} \pi^* (x)$. We must show that $h(w) \in \mcal{P}_{x \cap \kappa} \pi^* (x)$.

Let $z := \bigcup \pi^{-1} [w] \in \mcal{P}_{x \cap \kappa} x$. Then $\pi^{-1} [w] \subseteq \mcal{P} (z)$, and so $w \subseteq \pi [ \mcal{P} (z) ]$. Since $h$ is $\subseteq$-increasing, we have $h(w) \subseteq h( \pi [ \mcal{P} (z) ] ) =: v$. Then $f_2 (z) = \bigcup \pi^{-1} [v]$, and so $\pi^{-1} [v] \subseteq \mcal{P} ( f_2 (z) )$, which in turn implies that $v \subseteq \pi [ \mcal{P} ( f_2 (z) ) ]$. Thus $h(w) \subseteq \pi [ \mcal{P} ( f_2 (z) ) ]$.

But $f_2 (z) \in \mcal{P}_{x \cap \kappa} x$ since $\mcal{P}_{x \cap \kappa} x$ is closed under $f_2$. Then $\pi [ \mcal{P} ( f_2 (z) ) ] \subseteq \pi^* (x)$, and $| \pi [ \mcal{P} ( f_2 (z) ) ] | < x \cap \kappa$ since $x \cap \kappa$ is inaccessible. So $\pi [ \mcal{P} ( f_2 (z) ) ] \in \mcal{P}_{x \cap \kappa} \pi^* (x)$. Then $h(w) \in \mcal{P}_{x \cap \kappa} \pi^* (x)$ since $h(w) \subseteq \pi [ \mcal{P} ( f_2 (z) ) ]$.

We have proved (iii). Now, define $f : \mcal{P}_\kappa \lambda \to \mcal{P}_\kappa \lambda$ by $f(z) := f_0 (z) \cup f_1 (z) \cup f_2 (z)$. Since $S$ is strongly stationary in $\mcal{P}_\kappa \lambda$, we can take $x \in S$ with $\mcal{P}_{x \cap \kappa} x$ closed under $f$. Note that $\mcal{P}_{x \cap \kappa} x$ is closed under all $f_0 , f_1 , f_2$. By (i)--(iii), $x$ is as desired.
\end{proof}

Using the previous two lemmas, we can now prove the upward direction of Menas’ theorem for $1$-stationary sets, without any additional assumptions on $\lambda$ and $\lambda'$, if $\kappa$ is Mahlo.

\begin{prop} \label{upwards_Menas_1stat}
Let $\kappa$ be a Mahlo cardinal and let $\lambda , \lambda '$ be cardinals with $\kappa \leq \lambda < \lambda '$. Suppose $S \subseteq \mcal{P}_\kappa \lambda$. If $S$ is $1$-stationary in $\mcal{P}_\kappa \lambda$, then $S \uparrow \lambda '$ is $1$-stationary in $\mcal{P}_\kappa \lambda '$.
\end{prop}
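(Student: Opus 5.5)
We combine Lemma \ref{lift_lambda_lambda<kappa} and Lemma \ref{mltrr}, using Proposition \ref{mrtl} to pass back down when needed. Set $\nu := \lambda^{<\kappa}$. Since $\kappa$ is inaccessible and $\nu \geq \lambda$, we have $\nu^{<\kappa} = \lambda^{<\kappa} = \nu$. By Lemma \ref{lift_lambda_lambda<kappa}, $S \uparrow \nu$ is $1$-stationary in $\mcal{P}_\kappa \nu$. The argument then splits according to whether $\lambda' > \nu$ or $\lambda' \leq \nu$.

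\emph{Case 1: $\lambda' > \nu$.} Apply Lemma \ref{mltrr} with $\nu$ in place of $\lambda$; this is legitimate because $\nu^{<\kappa} = \nu$. We get that $(S \uparrow \nu) \uparrow \lambda'$ is $1$-stationary in $\mcal{P}_\kappa \lambda'$. Since $\lambda \subseteq \nu \subseteq \lambda'$, we have $y \cap \nu \cap \lambda = y \cap \lambda$ for every $y$, so $(S \uparrow \nu) \uparrow \lambda' = S \uparrow \lambda'$, and we are done.

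\emph{Case 2: $\lambda < \lambda' \leq \nu$.} If $\lambda' = \nu$ we are already done. Otherwise $\lambda' < \nu$, and we observe that $S \uparrow \nu = (S \uparrow \lambda') \uparrow \nu$, again because the lifting is transitive. So $(S\uparrow\lambda')\uparrow\nu$ is $1$-stationary in $\mcal{P}_\kappa\nu$. Proposition \ref{mrtl}, applied with $\lambda' < \nu$ and the set $S\uparrow\lambda' \subseteq \mcal{P}_\kappa\lambda'$, gives that $S \uparrow \lambda'$ is $1$-stationary in $\mcal{P}_\kappa \lambda'$. Its hypothesis that $\kappa$ is weakly Mahlo is satisfied, since $\kappa$ is Mahlo.

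There is no real obstacle, since the substantive work is already in the two lemmas. The only points to check carefully are:
\begin{itemize}
\item the cardinal arithmetic $(\lambda^{<\kappa})^{<\kappa} = \lambda^{<\kappa}$, which holds because $\kappa$ is regular, so every subset of $\lambda^{<\kappa}$ of size $<\kappa$ is coded by a $<\kappa$-sequence of elements of $\mcal{P}_\kappa\lambda$, i.e.\ by an element of $\mcal{P}_\kappa(\lambda\times\kappa)$;
\item the transitivity of liftings, $(S\uparrow A)\uparrow B = S \uparrow B$ whenever $\lambda \subseteq A \subseteq B$, which is immediate from the definition.
\end{itemize}
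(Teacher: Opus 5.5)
Your proposal is correct and is essentially the paper's proof: lift to $\nu=\lambda^{<\kappa}$ via Lemma \ref{lift_lambda_lambda<kappa}, then use Lemma \ref{mltrr} when $\lambda'>\nu$ and Proposition \ref{mrtl} when $\lambda'\le\nu$. There are two small differences. In the second case you use $(S\uparrow\lambda')\uparrow\nu = S\uparrow\nu$, which matches the form in which Proposition \ref{mrtl} is stated, whereas the paper writes $(S\uparrow\nu)\downarrow\lambda' = S\uparrow\lambda'$. You also justify $\nu^{<\kappa}=\nu$, which the paper takes for granted.
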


\begin{proof}
Suppose $S$ is $1$-stationary in $\mcal{P}_\kappa \lambda$. Let $\nu := \lambda^{< \kappa}$. By Lemma \ref{lift_lambda_lambda<kappa}, $S \uparrow \nu$ is $1$-stationary in $\mcal{P}_\kappa \nu$. If $\lambda ' \leq \nu$, then $S \uparrow \lambda '$ is $1$-stationary in $\mcal{P}_\kappa \lambda '$ by Proposition \ref{mrtl} and the fact that $( S \uparrow \nu ) \downarrow \lambda' = S \uparrow \lambda'$. If $\lambda ' > \nu$, then $S \uparrow \lambda ' = ( S \uparrow \nu ) \uparrow \lambda '$ is $1$-stationary in $\mcal{P}_\kappa \lambda '$ by Lemma \ref{mltrr} and the fact that $\nu^{< \kappa} = \nu$.
\end{proof}

Finally, as an immediate consequence of Propositions \ref{mrtl} and \ref{upwards_Menas_1stat}, we obtain the announced Menas' theorem for  $1$-stationary sets.

\begin{theorem} \label{thm:Menas_1stat}
Let $\kappa$ be a Mahlo cardinal and $\lambda , \lambda '$ be cardinals with $\kappa \leq \lambda< \lambda'$. Suppose $S \subseteq \mcal{P}_\kappa \lambda$. Then $S$ is $1$-stationary in $\mcal{P}_{\kappa}\lambda$ if and only if $S \uparrow \lambda '$ is $1$-stationary in $\mcal{P}_{\kappa}{\lambda'}$.
\end{theorem}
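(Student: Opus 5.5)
Theorem \ref{thm:Menas_1stat} is just the conjunction of the two directions already established, so the proof is a direct assembly. A Mahlo cardinal is in particular weakly Mahlo, so all the earlier hypotheses are met.

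For the ``if'' (downward) direction, suppose $S \uparrow \lambda'$ is $1$-stationary in $\mcal{P}_\kappa \lambda'$. Since $\kappa$ is weakly Mahlo, Proposition \ref{mrtl} applies directly and gives that $S$ is $1$-stationary in $\mcal{P}_\kappa \lambda$. No further work is needed, because that argument only projects witnesses $x'$ down to $x' \cap \lambda$, and projection preserves unboundedness of the traces.

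For the ``only if'' (upward) direction, suppose $S$ is $1$-stationary in $\mcal{P}_\kappa \lambda$. Proposition \ref{upwards_Menas_1stat} uses the Mahlo hypothesis on $\kappa$ and yields that $S \uparrow \lambda'$ is $1$-stationary in $\mcal{P}_\kappa \lambda'$. I will recall the structure of that argument so the dependence on the hypotheses is visible. One first lifts to $\nu = \lambda^{<\kappa}$ by Lemma \ref{lift_lambda_lambda<kappa}; this is the only place where the full Mahlo hypothesis is used, via the characterization of $1$-stationarity as strong stationarity in Fact \ref{fact:1-stat}(2). Then there are two cases. If $\lambda' \le \nu$, one projects back down using Proposition \ref{mrtl}. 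If $\lambda' > \nu$, one lifts further using Lemma \ref{mltrr}, which is applicable because $\nu^{<\kappa} = \nu$.

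There is no real obstacle here: the substantive step, and the only delicate one, was the lift to $\lambda^{<\kappa}$ in Lemma \ref{lift_lambda_lambda<kappa}, and that is already proved. The only thing to check is that the set-theoretic identities $(S\uparrow\nu)\downarrow\lambda' = S\uparrow\lambda'$ (when $\lambda'\le\nu$) and $(S\uparrow\nu)\uparrow\lambda' = S\uparrow\lambda'$ (when $\lambda' > \nu$) hold. Both are immediate from the definitions, since $\lambda \le \nu$ in the first case and $\lambda\le \nu\le\lambda'$ in the second.
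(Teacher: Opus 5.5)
Your proposal is correct and matches the paper's proof, which states the theorem as an immediate consequence of Proposition \ref{mrtl} (downward direction) and Proposition \ref{upwards_Menas_1stat} (upward direction). Your recap of the upward argument also matches the paper: lift to $\nu=\lambda^{<\kappa}$ via Lemma \ref{lift_lambda_lambda<kappa}, then either project down or lift further via Lemma \ref{mltrr}, using $\nu^{<\kappa}=\nu$ and the two set identities; the edge case $\lambda'=\nu$ is trivial and needs no appeal to Proposition \ref{mrtl}.
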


As we mentioned before, we do not know whether it is necessary to assume $\kappa$ Mahlo in Theorem \ref{thm:Menas_1stat}. This assumption comes from Lemma \ref{lift_lambda_lambda<kappa} to prove the upward direction.

\begin{question}
Is it necessary to assume $\kappa$ Mahlo in Lemma \ref{lift_lambda_lambda<kappa}?
\end{question}

%--------------------------------------------------------------------------------------------------------------------------------------------------------------------------------------------------------------------------------------------------------
\subsection{Menas' Theorem for $2$-stationary sets} \label{sec:Means_2stat}
%--------------------------------------------------------------------------------------------------------------------------------------------------------------------------------------------------------------------------------------------------------

Here we discuss Menas' theorem for $2$-stationary sets. We begin with the downward direction, which follows easily from the case of $1$-stationary sets.

\begin{prop} \label{prop:downward_Menas_2stat}
Let $\kappa$ be a Mahlo cardinal and $\lambda , \lambda '$ be cardinals with $\kappa \leq \lambda < \lambda'$. Suppose $S \subseteq \mcal{P}_\kappa \lambda$. If $S \uparrow \lambda '$ is $2$-stationary in $\mcal{P}_{\kappa}\lambda'$, then $S$ is $2$-stationary in $\mcal{P}_{\kappa}{\lambda}$.
\end{prop}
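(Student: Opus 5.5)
We have to check that $S$ is $2$-stationary in $\mcal{P}_\kappa \lambda$, i.e.\ that $S$ reflects both unbounded and $1$-stationary subsets of $\mcal{P}_\kappa \lambda$. Reflection of unbounded sets needs no new work. Since $S \uparrow \lambda'$ is $2$-stationary, it is in particular $1$-stationary, so $S$ is $1$-stationary by Proposition \ref{mrtl}. That is exactly the $m = 0$ clause.

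For the $m=1$ clause, take a $1$-stationary $T \subseteq \mcal{P}_\kappa \lambda$. By Proposition \ref{upwards_Menas_1stat} (this is where we use that $\kappa$ is Mahlo), the lifting $T \uparrow \lambda'$ is $1$-stationary in $\mcal{P}_\kappa \lambda'$. Because $S \uparrow \lambda'$ is $2$-stationary, we obtain $x' \in S \uparrow \lambda'$ such that $\mu := x' \cap \kappa$ is weakly inaccessible and
\[
(T \uparrow \lambda') \cap \mcal{P}_\mu x'
\]
is $1$-stationary in $\mcal{P}_\mu x'$. We then put $x := x' \cap \lambda$. Then $x \in S$ and $x \cap \kappa = \mu$, so it remains to show that $T \cap \mcal{P}_\mu x$ is $1$-stationary in $\mcal{P}_\mu x$.

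The key observation is that
\[
(T \uparrow \lambda') \cap \mcal{P}_\mu x' = (T \cap \mcal{P}_\mu x) \uparrow x',
\]
which is immediate from the definitions. So we need the downward Menas direction for $1$-stationarity at $\mu$, with base sets $x \subseteq x'$ in place of $\lambda < \lambda'$. Proposition \ref{mrtl} is stated for cardinals, but its proof uses only the definitions together with the fact that the lifting of an unbounded set is unbounded. Both hold verbatim for arbitrary sets $\mu \subseteq x \subseteq x'$ with $\mu$ weakly Mahlo. Here $\mu$ is indeed weakly Mahlo: $\mcal{P}_\mu x'$ contains a $1$-stationary set, and by the fact from \cite{T} recalled in Section \ref{sec:higher_stat} this is impossible unless $\mu$ is weakly Mahlo. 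If needed, we can alternatively transfer to cardinals via Remark \ref{rmk:base_set}, using bijections $x \to |x|$ extended to $x' \to |x'|$. This extension works cleanly only when $|x| < |x'|$; in the equal-cardinality case it is simpler to rerun the proof of Proposition \ref{mrtl} directly.

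Rerunning that argument goes as follows. Let $U \subseteq \mcal{P}_\mu x$ be unbounded. Then $U \uparrow x'$ is unbounded in $\mcal{P}_\mu x'$, so by $1$-stationarity of $(T \cap \mcal{P}_\mu x)\uparrow x'$ we get $y' \in (T \cap \mcal{P}_\mu x)\uparrow x'$ with $\nu := y' \cap \mu$ weakly inaccessible and $(U\uparrow x') \cap \mcal{P}_\nu y'$ unbounded. Set $y := y' \cap x$. Then $y \in T \cap \mcal{P}_\mu x$, and $U \cap \mcal{P}_\nu y$ is unbounded in $\mcal{P}_\nu y$, because $(U\uparrow x') \cap \mcal{P}_\nu y' = (U \cap \mcal{P}_\nu y)\uparrow y'$. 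This reflection of $U$ shows that $T \cap \mcal{P}_\mu x$ is $1$-stationary in $\mcal{P}_\mu x$, completing the $m = 1$ clause.

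The only genuinely nontrivial ingredient is the upward lifting of $T$ in the second step, which is exactly Proposition \ref{upwards_Menas_1stat} and explains the Mahlo hypothesis. The main bookkeeping obstacle is applying the downward direction at the reflected level $\mu$ with non-cardinal base sets $x \subseteq x'$, which is settled by the direct argument above.
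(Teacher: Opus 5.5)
Your proof is correct and follows the paper's argument. The paper likewise lifts $T$ to $T\uparrow\lambda'$ via Proposition~\ref{upwards_Menas_1stat}, reflects it at some $x'\in S\uparrow\lambda'$, sets $x:=x'\cap\lambda$, uses the identity $(T\cap\mcal{P}_\mu x)\uparrow x' = (T\uparrow\lambda')\cap\mcal{P}_\mu x'$, and reruns the proof of Proposition~\ref{mrtl} at level $\mu$. Your separate handling of the $m=0$ clause and your written-out rerun with non-cardinal base sets only make explicit details the paper leaves implicit.
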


%----------------------------------------------------------------------------------
\begin{proof}  %---------------------------------------------------------------
Suppose $S \uparrow \lambda '$ is $2$-stationary. Take an arbitrary $1$-stationary $T \subseteq \mcal{P}_{\kappa}{\lambda}$. We find $x \in S$ such that $x \cap \kappa$ is weakly inaccessible, and $T \cap \mcal{P}_{x \cap \kappa} x$ is $1$-stationary.

By Proposition \ref{upwards_Menas_1stat}, $T' := T \uparrow \lambda '$ is $1$-stationary in $\mcal{P}_{\kappa}{\lambda'}$. Since $S \uparrow \lambda '$ is $2$-stationary, there is $x' \in S \uparrow \lambda'$ such that $\mu := x' \cap \kappa$ is weakly inaccessible, and $T' \cap \mcal{P}_\mu x'$ is $1$-stationary. Note that $\mu$ is weakly Mahlo.

Let $x := x' \cap \lambda \in S$. Then $x \cap \kappa = \mu$ is weakly Mahlo, and $( T \cap \mcal{P}_\mu x ) \uparrow x' = T' \cap \mcal{P}_\mu x'$. So, by the same argument as the proof of Proposition \ref{mrtl}, $T \cap \mcal{P}_\mu x$ is $1$-stationary. So $x$ is as desired.
\end{proof}  %---------------------------------------------------------------

%----------------------------------------------------------------------------------

We next show that the upward direction of Menas' theorem can fail for $2$-stationary sets. More precisely, we will construct a model in which $\mcal{P}_\kappa \lambda$ is $2$-stationary, while $\mcal{P}_\kappa \lambda^+$ is not $2$-stationary for some $\kappa$ and $\lambda$. Note that $\mcal{P}_\kappa \lambda \uparrow \lambda^+ = \mcal{P}_\kappa \lambda^+$, so the upward Menas' theorem for $2$-stationary sets would fail in the model.

To construct such a model, we use the following forcing poset, which makes $\mcal{P}_\kappa \nu$ non-$2$-stationary. For regular uncountable cardinals $\kappa$ and $\nu$ with $\kappa \leq \nu$, let $\mbb{Q} ( \kappa , \nu )$ be the poset of all $p$ such that for some $\delta_p < \nu$,
\begin{itemize}
\item[(i)] $p : \mcal{P}_\kappa \delta_p \to 2$,
\item[(ii)] $p^{-1} (1) \cap \mcal{P}_{x \cap \kappa} x$ is non-stationary in $\mcal{P}_{x \cap \kappa} x$ for any $x \in \mcal{P}_\kappa \nu$ with $x \cap \kappa$ regular uncountable.
\end{itemize}
$\mbb{Q} ( \kappa , \nu )$ is ordered by reverse inclusion.

First we prove the following.

\begin{lemma} \label{lem:Q_basics}
Let $\kappa$ and $\nu$ be regular uncountable cardinals with $\kappa \leq \nu$.
\begin{enumerate}
\item For any $\gamma < \nu$ the set $D_\gamma = \{ p \in \mbb{Q} ( \kappa , \nu ) : \gamma \leq \delta_p \}$ is dense in $\mbb{Q} ( \kappa , \nu )$.
\item $\mbb{Q} ( \kappa , \nu )$ is $\nu$-strategically closed.
\end{enumerate}
\end{lemma}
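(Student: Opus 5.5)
For (1), given $p\in\mbb{Q}(\kappa,\nu)$ and $\gamma<\nu$ with $\gamma>\delta_p$, I will extend $p$ by zeros. That is, let $q:\mcal{P}_\kappa\gamma\to 2$ agree with $p$ on $\mcal{P}_\kappa\delta_p$ and take value $0$ elsewhere. Then $q^{-1}(1)=p^{-1}(1)$, so clause (ii) for $q$ is literally clause (ii) for $p$. Hence $q\in D_\gamma$ and $q\le p$. (If $\gamma\le\delta_p$ then already $p\in D_\gamma$.)

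For (2), I will give Even a strategy that keeps the $1$-parts tiny. At even successor stages Even extends by zeros, exactly as in (1), to some $\delta$ larger than all previous $\delta$'s. At limit stages $\alpha<\nu$, Even plays $p_\alpha := \bigcup_{\beta<\alpha}p_\beta \cup \{(z,0) : z\in\mcal{P}_\kappa\delta_\alpha\setminus \bigcup_{\beta<\alpha}\mcal{P}_\kappa\delta_{p_\beta}\}$, where $\delta_\alpha:=\sup_{\beta<\alpha}\delta_{p_\beta}<\nu$ by regularity of $\nu$. The new points are those $z$ with $\sup z=\delta_\alpha$ (when $\cof(\delta_\alpha)<\kappa$), and they are all assigned $0$. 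The only thing to verify is clause (ii) for $p_\alpha$. Since $p_\alpha^{-1}(1)=\bigcup_{\beta<\alpha}p_\beta^{-1}(1)$, fix $x\in\mcal{P}_\kappa\nu$ with $\mu:=x\cap\kappa$ regular uncountable. We must show that $\bigcup_{\beta<\alpha}p_\beta^{-1}(1)\cap\mcal{P}_\mu x$ is nonstationary in $\mcal{P}_\mu x$.

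\textbf{Case 1.} The sequence $\langle \delta_{p_\beta}\cap x\rangle$ stabilizes, i.e.\ $x\cap\delta_\alpha\subseteq\delta_{p_\beta}$ for some $\beta<\alpha$. Then every $z\in\mcal{P}_\mu x$ with $p_\alpha(z)=1$ lies in $\mcal{P}_\kappa\delta_{p_\beta}$, where $p_\alpha$ agrees with $p_\beta$. So the relevant set equals $p_\beta^{-1}(1)\cap\mcal{P}_\mu x$, which is nonstationary by hypothesis.

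\textbf{Case 2.} $x\cap\delta_\alpha$ is cofinal in $\delta_\alpha$. Let $\theta:=\cof(\otp(x\cap\delta_\alpha))$. If $\theta\ne\mu$, I take the club of $z\in\mcal{P}_\mu x$ such that $z\cap\delta_\alpha$ is bounded in $x\cap\delta_\alpha$ when $\theta>\mu$, or contains a fixed cofinal subset of size $\theta<\mu$ when $\theta<\mu$. In the first subcase each $z$ in the club lies in some $\mcal{P}_\kappa\delta_{p_\beta}$. In the second subcase each $z$ has $\sup z\ge\delta_\alpha$, so $z\notin\mcal{P}_\kappa\delta_{p_\beta}$ for any $\beta$, and $z$ is a new point with value $0$. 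The first subcase still needs care, because the union over $\beta$ of nonstationary sets need not be nonstationary.

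\textbf{The fix.} I will use the game structure. Even's strategy also keeps, for all limit stages, the values of clubs witnessing (ii). The cleanest route is to have Even maintain that $p_\beta^{-1}(1)\cap\mcal{P}_\mu x$ is witnessed nonstationary by clubs $C^x_\beta$ that are coherent in $\beta$. Since Even's own moves add no $1$'s, a simpler bound suffices. Partition $p_\alpha^{-1}(1)\cap\mcal{P}_\mu x$ according to the least $\beta$ with $z\in\mcal{P}_\kappa\delta_{p_\beta}$. The $z$ with $\sup z\in[\delta_{p_\beta},\delta_{p_{\beta+1}})$ are controlled by $p_{\beta+1}$. Choose clubs $C_\beta\subseteq\mcal{P}_\mu x$ avoiding $p_{\beta+1}^{-1}(1)$. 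If $\otp$ of the relevant index set is $<\mu$ (i.e.\ $\theta<\mu$), intersect them. If $\theta>\mu$, add the boundedness club; when $\theta>\mu$ a diagonal-type intersection along a cofinal sequence of length $\theta$ is needed, and I expect this step to be the main obstacle. I would first try to restrict attention to $z$ with $\sup(z\cap\delta_\alpha)$ in a fixed $\mu$-closed-free position to reduce to the case $\theta\le\mu$. If that fails, I would modify Even's strategy so that every even move records a club and the limit move is justified by diagonal intersection. The case $\theta=\mu$ is handled by the club of $z$ with $\sup(z\cap\delta_\alpha)<\delta_\alpha$ together with the same diagonal intersection.
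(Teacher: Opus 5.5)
Your strategy for Even is the same as the paper's. Part (1), your Case 1, and the subcase $\theta<\mu$ are all correct.

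However, there is a genuine gap in (2). The case where $x\cap\delta_\alpha$ is cofinal in $\delta_\alpha$ and $\theta=\cof(\delta_\alpha)\ge\mu$ is the heart of the proof. You leave it as an anticipated obstacle, with tentative fixes (diagonal intersections, recording clubs, modifying the strategy), none of which is carried out. As you observe yourself, the clubs witnessing (ii) for the individual $p_\beta$ cannot simply be intersected when there are $\ge\mu$ of them. So the proof is incomplete exactly where it matters.

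The missing idea is one you already used for $\theta<\mu$, but applied to \emph{earlier} limit stages rather than the current one. At every limit stage $\beta<\alpha$, Even assigned value $0$ to every $z$ with $\sup z=\delta_{p_\beta}$; these are precisely the new points at stage $\beta$, since the $\delta_{p_\gamma}$ strictly increase.

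\begin{itemize}
\item Let $C$ be the set of $\delta_{p_\beta}$ for limit $\beta<\alpha$. Because Even takes suprema at limits, $C$ is closed. Because $\cof(\alpha)=\cof(\delta_\alpha)\ge\mu>\omega$, $C$ is unbounded in $\delta_\alpha$.
\item We may assume $x\subseteq\delta_\alpha$. Otherwise the club of $z$ containing a fixed point of $x\setminus\delta_\alpha$ misses $\dom(p_\alpha)$.
\item The set $Z:=\{z\in\mcal{P}_\mu x:\sup z\in C,\ \sup z\notin z\}$ is club in $\mcal{P}_\mu x$. For closure, note that $|z|<\mu\le\cof(\delta_\alpha)$ forces $\sup z<\delta_\alpha$, and $C$ is closed. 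Unboundedness follows from an $\omega$-length interleaving of elements of $C$ with elements of $x$.
\item Each $z\in Z$ is a new point at some limit stage $\beta$, so $p_\alpha(z)=p_\beta(z)=0$.
\end{itemize}

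This handles $\theta=\mu$ and $\theta>\mu$ at once. It needs neither the clubs witnessing (ii) for the $p_\beta$ nor any change to Even's strategy. A diagonal intersection indexed by elements of $x$ can also be made to work, but only after adding this same observation. It is needed for those $z$ whose supremum is a limit-stage $\delta_{p_\beta}$ not covered by any earlier $\delta_{p_\gamma}$.
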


\begin{proof}
Let $\mbb{Q} := \mbb{Q} ( \kappa , \nu )$.

\medskip
\noindent
(1) Suppose $\gamma < \nu$ and $p \in \mbb{Q}$. Take $\delta$ with $\gamma , \delta_p \leq \delta < \nu$, and let $q : \mcal{P}_\kappa \delta \to 2$ be such that $q \supseteq p$, and $q(z) = 0$ for all $z \in \mcal{P}_\kappa \delta \setminus \dom (p)$. It is easy to check that $q \in \mbb{Q}$. Then $q \leq p$ and $q \in D_\gamma$.

\medskip
\noindent
(2) First we make some preliminaries.

Suppose $\alpha$ is a limit ordinal $< \nu$, and $\vec{p} = \langle p_\beta : \beta < \alpha \rangle$ is a descending sequence in $\mbb{Q}$. Then let $\delta_{\vec{p}} := \sup_{\beta < \alpha} \delta_{p_\beta}$, $q_{\vec{p}} := \bigcup_{\beta < \alpha} p_\beta$ and $r_{\vec{p}} : \mcal{P}_\kappa \delta_{\vec{p}} \to 2$ be such that $r_{\vec{p}} \supseteq q_{\vec{p}}$ and $r_{\vec{p}} (z) = 0$ for all $z \in \mcal{P}_\kappa \delta_{\vec{p}} \setminus \dom ( q_{\vec{p}} )$.

Note that if $\vec{p}$ is not eventually constant (i.e.~$\delta_{p_\beta} < \delta_{\vec{p}}$ for all $\beta < \alpha$), and $\cof ( \delta_{\vec{p}} ) < \kappa$, then $\mcal{P}_\kappa \delta_{\vec{p}} \setminus \dom ( q_{\vec{p}} )$ is the set of all $z \in \mcal{P}_\kappa \delta_{\vec{p}}$ with $\sup z = \delta_{\vec{p}}$. Otherwise, $\mcal{P}_\kappa \delta_{\vec{p}} \setminus \dom ( q_{\vec{p}} ) = \emptyset$, and so $r_{\vec{p}} = q_{\vec{p}}$.
Note also that if $\vec{p}$ has a lower bound in $\mbb{Q}$, then $r_{\vec{p}} \in \mbb{Q}$. This is because ${r_{\vec{p}}}^{-1} (1) \subseteq p^{-1} (1)$ for any lower bound $p$ of $\vec{p}$.

Now we give a strategy of Even in the game $\Game_\nu ( \mbb{Q} )$. Suppose $\alpha$ is an even ordinal $< \nu$, and $\vec{p} = \langle p_\beta : \beta < \alpha \rangle$ has been played before the $\alpha$-th stage. If $\alpha$ is a successor ordinal, then Even chooses $p_\alpha$ so that $p_\alpha \lneq p_{\alpha -1}$. If $\alpha$ is a limit ordinal, and $\vec{p}$ has a lower bound in $\mbb{Q}$, then Even chooses $p_\alpha = r_{\vec{p}}$.

We show that the above is a winning strategy of Even. Suppose $\alpha$ is a limit ordinal $< \nu$, and $\vec{p} = \langle p_\beta : \beta < \alpha \rangle$ is a play of $\Game_\nu ( \mbb{Q} )$ in which Even has moved according to the above strategy. We must show that $\vec{p}$ has a lower bound in $\mbb{Q}$. For this it suffices to prove that $r_{\vec{p}} \in \mbb{Q}$.

Before starting, note the following by the strategy of Even at successor stages.
\begin{itemize}
\item[(i)] $p_\gamma \lneq p_{\gamma -1}$, and so $\delta_{p_\gamma} > \delta_{p_{\gamma -1}}$, for all even successor $\gamma < \alpha$.
\end{itemize}
Below, let $\delta$, $q$ and $r$ denote $\delta_{\vec{p}}$, $q_{\vec{p}}$ and $r_{\vec{p}}$, respectively.

Suppose $x \in \mcal{P}_\kappa \nu$, and $\mu := x \cap \kappa$ is a regular uncountable cardinal. We show that $r^{-1} (1) \cap \mcal{P}_\mu x$ is non-stationary. This is clear if $x \not\subseteq \delta$. So assume $x \subseteq \delta$. We consider several cases. Note that $\delta$ is a limit ordinal by (i).

First suppose $\sup x < \delta$. Then there is $\beta < \alpha$ with $x \in \mcal{P}_\kappa \delta_{p_\beta}$. Then $r^{-1} (1) \cap \mcal{P}_\mu x = {p_\beta}^{-1} (1) \cap \mcal{P}_\mu x$, and it is non-stationary since $p_\beta \in \mbb{Q}$.

Next suppose $\sup x = \delta$, and $\cof ( \delta ) < \mu$. Then there are club many $z \in \mcal{P}_\mu x$ with $\sup z = \delta$, and $r(z) = 0$ for all such $z$. So $r^{-1} (1) \cap \mcal{P}_\mu x$ is non-stationary.

Finally suppose $\sup x = \delta$, and $\cof ( \delta ) \geq \mu$. Let $C$ be the collection of $\delta_{p_\beta}$ for all limit $\beta < \alpha$. Then $C$ is club in $\delta$. So the set
\[
Z := \{ z \in \mcal{P}_\mu x : \sup z \notin z \wedge \sup z \in C \}
\]
is club in $\mcal{P}_\mu x$. But $r(z) = 0$ for all $z \in Z$, by (i) and the strategy of Even at limit stages. So $r^{-1} (1) \cap \mcal{P}_\mu x$ is non-stationary.
\end{proof}

By Lemma \ref{lem:Q_basics}, a forcing extension by $\mbb{Q} ( \kappa , \nu )$ adds no new sequences of ordinals of length $< \nu$. So it preserves regularities of $\kappa$ and $\nu$ and does not change $\mcal{P}_\kappa \nu$.

Now we prove that $\mbb{Q} ( \kappa , \nu )$ makes $\mcal{P}_\kappa \nu$ non-$2$-stationary.

\begin{lemma} \label{lem:Q_2stat}
Let $\kappa$ be a weakly Mahlo cardinal and let $\nu$ be a regular cardinal $\geq \kappa$. Then $\mbb{Q} ( \kappa , \nu )$ forces that $\mcal{P}_\kappa \nu$ is not $2$-stationary.
\end{lemma}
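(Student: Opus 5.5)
Let $G$ be $\mbb{Q}(\kappa,\nu)$-generic over $V$ and put $g := \bigcup G : \mcal{P}_\kappa \nu \to 2$; by Lemma \ref{lem:Q_basics}(1) this is a total function. By Lemma \ref{lem:Q_basics}(2) no new sequences of ordinals of length $<\nu$ are added. Hence $\mcal{P}_\kappa \nu$, the regular and weakly inaccessible cardinals $\le\kappa$, and each $\mcal{P}_\mu x$ for $x \in \mcal{P}_\kappa\nu$ are unchanged. The witness to non-$2$-stationarity will be $T := g^{-1}(1)$, and I plan to show two things in $V[G]$: (a) $T$ is $1$-stationary in $\mcal{P}_\kappa\nu$, and (b) $T \cap \mcal{P}_\mu x$ is not $1$-stationary in $\mcal{P}_\mu x$ for every $x$ with $\mu = x\cap\kappa$ weakly inaccessible. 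Together these say that no $x \in \mcal{P}_\kappa\nu$ reflects the $1$-stationary set $T$, so $\mcal{P}_\kappa\nu$ is not $2$-stationary.

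Part (b) is the easy half. Fix $x$ with $\mu = x\cap\kappa$ weakly inaccessible, hence regular uncountable. Choose $p \in G$ with $\delta_p \ge \sup x + 1$ (possible since $\nu$ is regular and $|x|<\kappa\le\nu$). Then $T\cap\mcal{P}_\mu x = p^{-1}(1)\cap\mcal{P}_\mu x$, which is non-stationary in $V$ by clause (ii) of the definition of $\mbb{Q}(\kappa,\nu)$. It remains non-stationary in $V[G]$, since a club in $V$ is still a club. By Fact \ref{fact:1-stat}(1), a $1$-stationary set is stationary, so $T\cap\mcal{P}_\mu x$ is not $1$-stationary.

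Part (a) is the main obstacle. Fix a name $\dot U$ for an unbounded subset of $\mcal{P}_\kappa\nu$ and a condition $p$. I need some $q\le p$ and some $x$ such that $q$ forces $x\in T$, $x\cap\kappa$ weakly inaccessible, and $\dot U\cap\mcal{P}_{x\cap\kappa}x$ unbounded. I would build a descending sequence $\langle p_i : i<\kappa\rangle$ using Even's strategy from Lemma \ref{lem:Q_basics}. At each successor step, $p_{i+1}$ decides an element $u_i\in\dot U$ with $u_i \supseteq$ a given bookkeeping set, and also extends the domain. Alongside, I build a continuous increasing chain $x_i\in\mcal{P}_\kappa\nu$ with $x_i\cap\kappa$ an ordinal, $x_{i+1}\supseteq u_i$, and $\sup x_i<\delta_{p_{i+1}}$, arranged so that every element of $\mcal{P}_{x_i\cap\kappa}x_i$ is eventually covered by some $u_j$ with $u_j\subseteq x_{j+1}$. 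Since $\kappa$ is weakly Mahlo, the club of $i$ with $x_i\cap\kappa=i$ contains a weakly inaccessible $\mu$. Take $x := x_\mu$; then $\dot U\cap\mcal{P}_\mu x$ is forced to be unbounded, because every $z\in\mcal{P}_\mu x$ lies in some $x_i$ with $i<\mu$ and is then covered by some $u_j\subseteq x_\mu$.

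The delicate point is the final condition. The union $r$ of $\langle p_i : i<\mu\rangle$ (taken as in the proof of Lemma \ref{lem:Q_basics}) has $\sup x = \delta_r$ with $\cof(\delta_r)=\mu$, so $x\notin\dom(r)$. I would extend $r$ by setting value $1$ at $x$ and $0$ elsewhere on $\mcal{P}_\kappa(\delta_r+1)\setminus\dom(r)$. This extension must still satisfy clause (ii). The new $1$ sits only at the single point $x$, and for each relevant $y$ the singleton $\{x\}$ is non-stationary in $\mcal{P}_{y\cap\kappa}y$. So the argument reduces to the verification from Lemma \ref{lem:Q_basics}(2) showing that $r$ itself is a condition, together with the closure argument that the sequence of length $\mu<\nu$ has a lower bound. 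Thus $q$ forces $x\in T$, which completes (a).
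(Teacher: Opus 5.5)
Your proposal is correct and follows essentially the paper's argument. The witness is $(\bigcup G)^{-1}(1)$. Non-reflection comes from clause (ii) together with the absence of new small subsets. $1$-stationarity comes from a descending sequence built by strategic closure, a weakly inaccessible closure point $\mu$ supplied by weak Mahloness, and setting the value $1$ at the single new point $x$; this last step is legitimate because $q^{-1}(1)\subseteq p_\mu^{-1}(1)\cup\{x\}$ for the lower bound $p_\mu$.

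Two small details need to be made explicit. First, $x_{i+1}$ must contain an ordinal $\geq\delta_{p_i}$ (the paper's requirement $z_{\alpha+1}\not\subseteq\delta_{p_\alpha}$). Your listed constraints only give $\sup x\le\delta_r$, not the equality you need for $x\notin\dom(r)$. Second, $\mu$ should also be a closure point of $i\mapsto|x_i|$, so that the $u_j$ for $j<\mu$ actually belong to $\mcal{P}_\mu x$.
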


\begin{proof}
Let $\mbb{Q} := \mbb{Q} ( \kappa , \nu )$. Suppose $G$ is a $\mbb{Q}$-generic filter over $V$. In $V[G]$, let $S := ( \bigcup G )^{-1} (1)$.
Then, by the definition of $\mbb{Q}$ and Fact \ref{fact:1-stat} (1), in $V[G]$, $S \cap \mcal{P}_{x \cap \kappa} x$ is not $1$-stationary for any $x \in \mcal{P}_\kappa \nu$ with $x \cap \kappa$ weakly inaccessible. Thus it suffices to prove that $S$ is $1$-stationary in $\mcal{P}_\kappa \nu$.

We work in $V$. Suppose $\dot{T}$ is a $\mbb{Q}$-name for an unbounded subset of $\mcal{P}_\kappa \nu$, and $p \in \mbb{Q}$. It suffices to find $q \leq p$ and $x \in \dom (q)$ such that $x \cap \kappa$ is weakly inaccessible, $q(x) = 1$, and there are unboundedly many $z \in \mcal{P}_{x \cap \kappa} x$ such that $q \Vdash ``\, z \in \dot{T} \,"$.

Note that $\mbb{Q}$ is $\kappa$-strategically closed by Lemma \ref{lem:Q_basics}. Then we can recursively construct a descending sequence $\langle p_\alpha : \alpha < \kappa \rangle$ in $\mbb{Q}$ below $p$ and a $\subseteq$-increasing sequence $\langle z_\alpha : \alpha < \kappa \rangle$ in $\mcal{P}_\kappa \nu$ so that for all $\alpha < \kappa$,
\begin{itemize}
\item[(i)] $\alpha \subseteq z_\alpha$,
\item[(ii)] $z_\alpha \subseteq \delta_{p_\alpha}$, but $z_{\alpha + 1} \not\subseteq \delta_{p_\alpha}$,
\item[(iii)] $p_\alpha \Vdash ``\, z_\alpha \in \dot{T} \,"$.
\end{itemize}
Note that $\langle \delta_{p_\alpha} : \alpha < \kappa \rangle$ is strictly increasing by (ii).

Since $\kappa$ is weakly Mahlo, we can take a weakly inaccessible $\mu < \kappa$ such that $| z_\alpha | < \mu$ for all $\alpha < \mu$. Let $x := \bigcup_{\alpha < \mu} z_\alpha$ and $\delta := \sup_{\alpha < \mu} \delta_{p_\alpha}$. Then $x \cap \kappa = \mu$ by (i). Moreover, by (ii), $x \in \mcal{P}_\kappa \delta$, and $\sup x = \delta$.

Let $q : \mcal{P}_\kappa \delta \to 2$ be such that $q \supseteq \bigcup_{\alpha < \mu} p_\alpha$, $q(x) = 1$, and $q (z) = 0$ for any $z \in \mcal{P}_\kappa \delta$ with $\sup (z) = \delta$ and $z \neq x$. Then, using the fact that $\langle p_\alpha : \alpha < \mu \rangle$ has a lower bound $p_\mu$ in $\mbb{Q}$, it is easy to check that $q \in \mbb{Q}$.

$q(x) = 1$ by  definition. Note that $q$ is a lower bound of $\langle p_\alpha : \alpha < \mu \rangle$. So $q \leq p$. Moreover $q \Vdash ``\,z_\alpha \in \dot{T}\,"$ for all $\alpha < \mu$ by (iii). But $\{ z_\alpha : \alpha < \mu \}$ is unbounded in $\mcal{P}_\mu x$ by the regularity of $\mu$ and the fact that $x = \bigcup_{\alpha < \mu} z_\alpha$. Thus $q$ and $x$ are as desired.
\end{proof}

Now we prove the following, which shows the consistency of the failure of Menas' theorem for $2$-stationary sets.

\begin{prop} \label{fum2stp}
Suppose $\kappa$ is a weakly Mahlo cardinal $\lambda$ is a cardinal $\geq \kappa$ with $\lambda^{< \kappa} = \lambda$, and $\mcal{P}_\kappa \lambda$ is $2$-stationary. Then there is a $\lambda^+$-strategically closed forcing extension in which $\mcal{P}_\kappa \lambda$ is $2$-stationary but $\mcal{P}_\kappa \lambda^+$ is not $2$-stationary
\end{prop}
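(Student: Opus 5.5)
Force with $\mbb{Q} := \mbb{Q}(\kappa,\lambda^+)$. By Lemma~\ref{lem:Q_basics} it is $\lambda^+$-strategically closed, hence adds no new sequences of ordinals of length $\leq \lambda$. In particular $\kappa$ stays weakly Mahlo, $\lambda^+$ stays regular, and $\mcal{P}_\kappa \lambda$ is unchanged. By Lemma~\ref{lem:Q_2stat}, applied with $\nu = \lambda^+$, $\mcal{P}_\kappa \lambda^+$ is not $2$-stationary in $V[G]$. So the only work is to show that $\mcal{P}_\kappa \lambda$ remains $2$-stationary in $V[G]$.

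Strategic closure gives this almost immediately. Since $\lambda^{<\kappa} = \lambda$, we have $|\mcal{P}_\kappa \lambda| = \lambda$. Hence every subset of $\mcal{P}_\kappa \lambda$ is coded by a $\lambda$-sequence of ordinals, and no new such sets are added. The same holds for subsets of $\mcal{P}_\mu x$ with $x \in \mcal{P}_\kappa \lambda$, since these have size $<\kappa$. Thus $\mcal{P}(\mcal{P}_\kappa\lambda)$ is the same in $V$ and $V[G]$, and so are all the relevant local power sets $\mcal{P}(\mcal{P}_\mu x)$. The notions ``unbounded'', ``$1$-stationary'', ``weakly inaccessible'' and ``$2$-stationary'' for subsets of $\mcal{P}_\kappa\lambda$ and of $\mcal{P}_\mu x$ are first-order over these unchanged structures. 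Concretely, $1$-stationarity of $T\subseteq \mcal{P}_\kappa\lambda$ quantifies over unbounded subsets of $\mcal{P}_\kappa\lambda$ and asks for local unboundedness, and $2$-stationarity quantifies over $0$- and $1$-stationary subsets. Hence they are absolute between $V$ and $V[G]$. Since $\mcal{P}_\kappa\lambda$ is $2$-stationary in $V$, it is $2$-stationary in $V[G]$.

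The main point to check carefully is this absoluteness, specifically that every $T\subseteq\mcal{P}_\kappa\lambda$ in $V[G]$ lies in $V$. To see it, fix in $V$ a bijection $e:\lambda\to\mcal{P}_\kappa\lambda$. A new $T$ would yield a new subset of $\lambda$, namely $e^{-1}[T]$, i.e.\ a new $\lambda$-sequence of $0$s and $1$s. This is impossible, because $\lambda^+$-strategic closure (in the game of length $\lambda^+$) means no new sequences of length $\lambda$ are added. Regularity and weak inaccessibility of cardinals $\mu<\kappa$ are likewise preserved. With that in place, the proposition follows.
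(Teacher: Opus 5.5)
Your proof is correct and is essentially the paper's argument: force with $\mathbb{Q}(\kappa,\lambda^+)$, get the failure of $2$-stationarity of $\mathcal{P}_\kappa\lambda^+$ from Lemma~\ref{lem:Q_2stat}, and use $\lambda^+$-strategic closure together with $\lambda^{<\kappa}=\lambda$ to see that $\mathcal{P}(\mathcal{P}_\kappa\lambda)$ is unchanged, so the $2$-stationarity of $\mathcal{P}_\kappa\lambda$ is preserved. One small point on justification: subsets of $\mathcal{P}_\mu x$ are not new simply because $\mathcal{P}_\mu x\subseteq\mathcal{P}_\kappa\lambda$; you do not need a size bound, and in any case $|\mathcal{P}_\mu x|<\kappa$ can fail when $\kappa$ is only weakly Mahlo.
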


%----------------------------------------------------------------------------------
\begin{proof}
By Lemma \ref{lem:Q_basics} (2), $\mbb{Q} ( \kappa , \lambda^+ )$ is $\lambda^+$-strategically closed. Suppose $G$ is a $\mbb{Q} ( \kappa , \lambda^+ )$-generic filter over $V$. Then $\mcal{P}_\kappa \lambda^+$ is not $2$-stationary by Lemma \ref{lem:Q_2stat}. Note that $\mcal{P} ( \mcal{P}_\kappa \lambda )$ is absolute between $V$ and $V[G]$ by the $\lambda^+$-strategically closure of $\mbb{Q} ( \kappa , \lambda^+ )$ and the assumption that $\lambda^{< \kappa} = \lambda$. So $\mcal{P}_\kappa \lambda$ is $2$-stationary in $V[G]$.
\end{proof}  %---------------------------------------------------------------

We end this seciton with two questions. First, we do not know if (upwards) Menas' Theorem for $2$-stationary sets is consistent.

\begin{question}
Is (upwards) Menas' Theorem for $2$-stationary sets consistent in non-trivail sense? More precisely, is it consistent with ZFC that there is a weakly Mahlo cardinal $\kappa$ with the following properties?
\begin{itemize}
\item[(i)] $\mcal{P}_\kappa \kappa$ is $2$-stationary,
\item[(ii)] For any cardinals $\lambda , \lambda '$ with $\kappa \leq \lambda < \lambda '$ and any $S \subseteq \mcal{P}_\kappa \lambda$, $S$ is $2$-stationary in $\mcal{P}_\kappa \lambda$ if and only if $S \uparrow \lambda '$ is $2$-stationary in $\mcal{P}_\kappa \lambda '$.
\end{itemize}
\end{question}

We present one more question. In Proposition \ref{fum2stp}, we assumed that $\lambda$ is a cardinal with $\lambda^{< \kappa} = \lambda$. Note that if $\lambda$ is a singular cardinal of cofinality $< \kappa$, then this assumption fails. We do not know the answer of the following question.

\begin{question}
Suppose $\kappa$ is a (weakly) Mahlo cardinal, $\lambda$ is a singular cardinal $> \kappa$ of cofinality $< \kappa$, and $\mcal{P}_\kappa \lambda$ is $2$-stationary. Then, is $\mcal{P}_\kappa \lambda^+$ $2$-stationary?
\end{question}

%--------------------------------------------------------------------------------------------------------------------------------------------------------------------------------------------------------------------------------------------------------
\section{$2$-stationarity and strong compactness} \label{sec:2stat_stcmpct}
%--------------------------------------------------------------------------------------------------------------------------------------------------------------------------------------------------------------------------------------------------------

In this section we prove the main theorem. Our proof is based on Krueger \cite{Kr} and Sakai \cite{S}, both of whom investigated the difference between supercompactness and strong compactness.

\begin{mthm}
Assume GCH holds, $\kappa$ is a supercommpact cardinal, and there is no measurable cardinal $> \kappa$. Then there is a forcing extension in which
\begin{itemize}
\item[(I)] $\kappa$ is strongly compact,
\item[(II)] $\mcal{P}_\kappa \lambda$ is not $2$-stationary in itself for any cardinal $\lambda > \kappa$.
\end{itemize}
\end{mthm}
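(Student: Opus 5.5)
We plan an Easton support iteration $\mbb{P}$ of length $\kappa+1$, or more precisely a class-length iteration above $\kappa$ as well. For each inaccessible $\alpha \le \kappa$, and then for each regular $\nu \geq \kappa^+$, we force with the posets $\mbb{Q}(\alpha,\nu)$ of Lemma \ref{lem:Q_basics}. Concretely, at stage $\alpha<\kappa$ inaccessible we force with the product (or iteration) of $\mbb{Q}(\alpha,\nu)$ over regular $\nu\in(\alpha,\alpha^{+\omega})$, or simply with $\mbb{Q}(\alpha,\alpha^+)$, so that the structure below $\kappa$ mirrors what happens at $\kappa$. Above $\kappa$ we then force with $\mbb{Q}(\kappa,\nu)$ for every regular $\nu>\kappa$, using Easton support. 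By Fact \ref{fact:Easton_it_basics}, together with $\nu$-strategic closure and GCH, cardinals, cofinalities and GCH are preserved. Lemma \ref{lem:Q_2stat} gives that after $\mbb{Q}(\kappa,\nu)$ the set $\mcal{P}_\kappa\nu$ is not $2$-stationary. The later stages are $\nu^+$-strategically closed, so they add no subsets of $\mcal{P}_\kappa\nu$ (here $\nu^{<\kappa}=\nu$ by GCH). Earlier stages cannot resurrect $2$-stationarity either, since the witnessing set $S$ and the nonstationarity of each $S\cap\mcal{P}_{x\cap\kappa}x$ are preserved. This handles all regular $\lambda>\kappa$.

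For singular $\lambda>\kappa$, we use the downward Menas direction for $2$-stationary sets, Proposition \ref{prop:downward_Menas_2stat}, applied to $\mcal{P}_\kappa\lambda^+$. If $\mcal{P}_\kappa\lambda$ were $2$-stationary, we would need the upward lift, which is exactly what fails in general. So instead we argue directly: $\mcal{P}_\kappa\lambda^+=\mcal{P}_\kappa\lambda\uparrow\lambda^+$, and we show non-$2$-stationarity of $\mcal{P}_\kappa\lambda$ itself. Let $S\subseteq\mcal{P}_\kappa\lambda^+$ be the generic $1$-stationary set with no reflection, and project it: $S\downarrow\lambda$. We need $S\downarrow\lambda$ to remain $1$-stationary, which holds since its lift contains $S$, using Theorem \ref{thm:Menas_1stat}. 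We also need it to be nowhere reflecting. I expect this to require either adding a generic $\mbb{Q}$-like set directly for singular $\lambda$ (a variant poset on $\mcal{P}_\kappa\lambda$ with conditions of size $<\lambda$, which is $\cof(\lambda)$-problematic), or using Corollary \ref{cor:d_1stat}-style arguments. My first attempt would be a modified poset $\mbb{Q}(\kappa,\lambda)$ for singular $\lambda$ with $\cof\lambda\ge\kappa$, and for $\cof\lambda<\kappa$ to code via $\lambda^+$ and check reflection points $x$ with $\sup(x\cap\lambda)$ handled by the club argument in Lemma \ref{lem:Q_basics}(2).

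The main obstacle is (I): preserving strong compactness of $\kappa$. Following Krueger and Sakai, we take $j:V\to M$ a $\lambda$-supercompact embedding for large $\lambda$. Rather than lifting it, we use $j$ composed with a further ultrapower, or a non-normal embedding whose critical point structure places $\kappa$ at a stage of $j(\mbb{P})$ where the relevant forcing is chosen trivial. The hypothesis that there are no measurables above $\kappa$ lets us choose, in $M$, the $j(\mbb{P})$ components at $\kappa$ so that the master condition argument works. Concretely, I would take $i=j_2\circ j_1$ with $j_1$ the ultrapower by a normal measure on $\kappa$ and $j_2$ a supercompactness embedding in $M_1$ for $j_1(\kappa)$. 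Then $\kappa$ is not of the form of a stage that $i(\mbb{P})$ treats as $\mcal{P}$-killed at $i[\lambda]$, and the generic for $i(\mbb{Q}(\kappa,\nu))$ can absorb $i[\mcal{P}_\kappa\nu]$ with value $0$ at the relevant point. The key step is building a master condition in $i(\mbb{Q}(\kappa,\nu))$ extending $i[G]$: this requires that $i[\mcal{P}_\kappa\nu]$ has small size in $M$ and that declaring value $0$ off the generic preserves clause (ii). We then use strategic closure and GCH counting ($M$-side dense sets numbering $i(\kappa)^+$-many) to build a generic, and lift, yielding a fine $\kappa$-complete ultrafilter on $\mcal{P}_\kappa\lambda$. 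Checking clause (ii) at points $x$ with $x\cap i(\kappa)=\kappa$ is where the choice of a non-normal $i$ matters, and I expect this to be the hardest verification.
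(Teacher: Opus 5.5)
\textbf{The gap is in (I).} You force with $\mbb{Q}(\kappa,\nu)$ at $\kappa$ itself, and the master condition you describe for $i=j_2\circ j_1$ does not exist.

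Take $\nu=\kappa^+$, let $H$ be the generic and $S=(\bigcup H)^{-1}(1)$. Since $j_1$ (and its lift through $\mbb{P}_\kappa$) has width $\le\kappa$, and $\mbb{Q}(\kappa,\kappa^+)$ is $\kappa^+$-strategically closed, Lemma~\ref{lem:width_generic} shows that $j_1[H]$ generates a generic $H_1$ for $\mbb{Q}(j_1(\kappa),j_1(\kappa^+))$. Its set $S_1=(\bigcup H_1)^{-1}(1)$ is $1$-stationary by Lemma~\ref{lem:Q_2stat}.

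Because $\mrm{crit}(j_2)=j_1(\kappa)$, we have $j_2(w)=j_2[w]$ whenever $|w|<j_1(\kappa)$. So every lower bound $q$ of $i[H]$ satisfies $q(j_2[w])=j_1(p)(w)$. Now consider $x:=j_2[j_1(\kappa^+)]$. It lies in $M_2$ because $j_2$ is supercompact, and $x\cap i(\kappa)=j_1(\kappa)$ is regular uncountable. At this point $q^{-1}(1)\cap\mcal{P}_{j_1(\kappa)}x=j_2[S_1]$, a copy of a $1$-stationary set. So clause (ii) fails.

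The only escape would be for the stage-$j_1(\kappa)$ part of $i(\mbb{P})$ to destroy the stationarity of $S_1$. Your plan has no mechanism for that, and with the natural choice of $H_1$ there, stationarity survives. ``Declaring value $0$ at the relevant point'' is not an option, because these values are already dictated by $i[H]$. Conceptually, you are trying to lift the $j_1(\kappa^+)$-supercompactness embedding $j_2$ through $\mbb{Q}(j_1(\kappa),j_1(\kappa^+))$. By Lemma~\ref{lem:Q_2stat} and Proposition~\ref{prop:spcmpct_n_stat}, this poset is designed to kill exactly such embeddings. Nothing in the proposal shows that $\kappa$ stays strongly compact.

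\textbf{The missing idea: the paper never forces at $\kappa$.} Its witness is the canonical set $S(\kappa,\kappa^+)$.
\begin{itemize}
\item Its $1$-stationarity comes from supercompactness in the Apter--Cummings model together with the $\kappa$-c.c.\ of the later iteration (Claim~1 in the proof of the Main Theorem).
\item Its non-reflection is arranged entirely below $\kappa$. Corollary~\ref{cor:dS_S<} handles points in $S_<$. $\square^E_\kappa$ with Lemma~\ref{lem:psquare_S} handles points with $\sup x\in E$. The remaining points have $x\cap\kappa$ measurable, and there Shioya's forcing has made $S(\mu,\mu^+)$ non-stationary.
\end{itemize}
Because the iteration is nontrivial only at measurables $<\kappa$, Proposition~\ref{prop:pres_strcmpct} applies with no master condition at $\kappa$. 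With $k$ of Mitchell order $0$, $\kappa$ is not measurable in $N$, so stage $\kappa$ of $l(\mbb{P}_\kappa)$ is trivial. The dangerous point $k(j[\kappa^+])$ has $k(\kappa)$ measurable in $N$, so non-reflection there is supplied by the Shioya stage at $k(\kappa)$, not by a master condition. The hypothesis that there are no measurables above $\kappa$ provides the $\lambda^+$-strategic closure of the tail in $M$, and (via Levy--Solovay) survives the first step.

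\textbf{Your treatment of (II) reads Proposition~\ref{prop:downward_Menas_2stat} backwards.} Since $\mcal{P}_\kappa\kappa^+\uparrow\lambda=\mcal{P}_\kappa\lambda$, non-$2$-stationarity of $\mcal{P}_\kappa\kappa^+$ alone gives non-$2$-stationarity of $\mcal{P}_\kappa\lambda$ for every $\lambda>\kappa$, singular or not. Neither the class iteration above $\kappa$ nor a special poset for singular $\lambda$ is needed.
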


We first describe the strategy of the proof. By downward Menas' Theorem for $2$-stationary sets (Proposition \ref{prop:downward_Menas_2stat}), it is enough to construct a forcing extension in which $\kappa$ is strongly compact and $\mcal{P}_\kappa \kappa^+$ is not $2$-stationary. In our forcing extension, the set
\[
S( \kappa , \kappa^+ ) := \{ x \in \mcal{P}_\kappa \kappa^+ \mid x \cap \kappa \in \kappa \;\&\; \otp (x) = ( x \cap \kappa )^+ \}
\]
will be a counterexample of $2$-stationarity of $\mcal{P}_\kappa \kappa^+$, i.e.~a non-reflecting $1$-stationary subset of $\mcal{P}_\kappa \kappa^+$. Recall that if $\kappa$ is $\kappa^+$-supercompact, then $S( \kappa , \kappa^+ )$ belongs to any normal $\kappa$-ultrafilter over $\mcal{P}_\kappa \kappa^+$, and so it is $1$-stationary.

Our forcing is designed to make $S( \kappa , \kappa^+ )$ non-reflecting while preserving the $1$-stationarity of $S( \kappa , \kappa^+ )$ and the strong compactness of $\kappa$. As in \cite{Kr, S}, the construction consists of two main steps. The first step is a forcing, due to Apter \& Cummings \cite{AC} that adds partial square sequence at $\kappa$ while preserving the supercompactness of $\kappa$. This ensures that $x \cap \kappa$ is measurable for all but non-stationary many $x \in S( \kappa , \kappa^+ )$. The second step is an Easton support iteration of a forcing due to Shioya \cite{Sh}, which makes $S( \alpha , \alpha^+ )$ non-stationary for all measurable $\alpha < \kappa$.

It is then easy to see that $S( \kappa , \kappa^+ )$ is non-reflecting after these two steps. In addition, we can prove that $S( \kappa , \kappa^+ )$ remains $1$-stationary by a somewhat standard argument. Moreover, we can show that $\kappa$ remains strongly compact using Magidor's argument, as presented in Cummings \cite{C} \S 22.

The remainder of this section is constructed as follows. In \S \ref{sec:S_psquare}, we review basic properties of $S( \kappa , \kappa^+ )$ and the partial square principle. In \S \ref{sec:pres_stcmpct} we review Magidor's argument for the preservation of strong compactness by certain Easton support iterations. Finally, in \S \ref{sec:main} we prove the main theorem.

%--------------------------------------------------------------------------------------------------------------------------------------------------------------------------------------------------------------------------------------------------------
\subsection{$S(\kappa, \kappa^{+})$ and partial square} \label{sec:S_psquare}
%--------------------------------------------------------------------------------------------------------------------------------------------------------------------------------------------------------------------------------------------------------

The set $S( \kappa , \kappa^+ )$ plays an important role in the study of supercompact cardinals and has been well studied. Although many of the facts below, or closely related variants of them, can be found in the literature (see, for example, \cite{K,Je,Kr1,Kr2,S}), we will include most of the proofs for completeness of this paper. We also consider the sets $S_< ( \kappa , \kappa^+ )$ and $S_> ( \kappa , \kappa^+ )$.

\begin{definition*}
For a weakly inaccessible cardinal $\kappa$, let
\begin{eqnarray*}
S ( \kappa , \kappa^+ ) & := & \{ x \in \mcal{P}_\kappa \kappa^+ : x \cap \kappa \in \kappa \;\&\; \otp (x) = ( x \cap \kappa )^+ \} , \\
S_<(\kappa, \kappa^+ ) & := & \{ x \in \mcal{P}_\kappa \kappa^+ :  x \cap \kappa \in \kappa \;\&\; \otp (x) < (x\cap \kappa)^+ \} , \\
S_> (\kappa, \kappa^+ ) & := & \{ x \in \mcal{P}_\kappa \kappa^+ :  x \cap \kappa \in \kappa \;\&\; \otp (x) > (x\cap \kappa)^+ \} .
\end{eqnarray*}
\end{definition*}

We first note that, for the latter two sets, stationarity is completely determined.

\begin{lemma}\label{lem:S_<_S_>_stat} 
Let $\kappa$ be a weakly inaccessible cardinal. Then $S_< ( \kappa , \kappa^+ )$ is stationary in $\mcal{P}_\kappa \kappa^+$, and $S_> ( \kappa , \kappa^+ )$ is non-stationary in $\mcal{P}_\kappa \kappa^+$.
\end{lemma}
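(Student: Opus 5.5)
We prove the two halves separately, in each case using the standard $\kappa^+$-sequence of bijections. For each $\gamma < \kappa^+$ with $\gamma \geq \kappa$ fix a surjection $e_\gamma : \kappa \to \gamma$, and consider the club $Z \subseteq \mcal{P}_\kappa \kappa^+$ of all $x$ such that $x \cap \kappa \in \kappa$ and $e_\gamma[x \cap \kappa] = x \cap \gamma$ for every $\gamma \in x$. Closure of $Z$ is straightforward because $\kappa$ is regular. Unboundedness follows from the usual closing-off argument: iterate $\omega$ times, at each step adding $e_\gamma[\sup(x\cap\kappa)]$ and $e_\gamma^{-1}$-preimages. Here we use that $\kappa$ is a limit cardinal, so that sets of size $<\kappa$ stay small.

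\emph{Non-stationarity of $S_>$.} Take $x \in Z$ and put $\mu := x \cap \kappa$. We may assume $\mu$ is an infinite cardinal, since there are club many such $x$ when $\kappa$ is weakly inaccessible. For every $\gamma \in x$ we have $x \cap \gamma = e_\gamma[\mu]$, so $|x \cap \gamma| \leq \mu$ and hence $\otp(x \cap \gamma) < \mu^+$. Therefore $\otp(x) = \sup_{\gamma \in x} (\otp(x\cap\gamma)+1) \leq \mu^+$. It follows that $Z \cap S_>(\kappa,\kappa^+) = \emptyset$, which proves the second claim.

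\emph{Stationarity of $S_<$.} We show that the set of $x$ with $x \cap \kappa \in \kappa$ and $\otp(x) < (x\cap\kappa)^+$ meets every club. This amounts to showing that $\{x : |x| = |x\cap\kappa|\}$ is stationary, because $x \cap \kappa \in \kappa$ gives $|x\cap \kappa| = |x \cap\kappa|$ as a cardinal $\leq \mu$, and then $\otp(x) < |x|^+ = \mu^+$ whenever $x \cap \kappa = \mu$ is a cardinal. Given $f : (\kappa^+)^{<\omega} \to \mcal{P}_\kappa \kappa^+$, build a continuous increasing chain $\langle x_n : n < \omega\rangle$, or better of length $\omega_1$, or simply of countable length. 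Start from any $x_0$ and let $x_{n+1}$ be the closure of $x_n$ under $f$ together with $\sup(x_n \cap \kappa)+1 \cup \dots$. The goal is to take the union $x$ so that $x \cap \kappa$ is an ordinal of cardinality $\geq |x|$. The cleanest way is to arrange $x_{n+1} \cap \kappa \supseteq |x_n|^{+}$-many ordinals, for instance by putting the ordinal $|x_n|^+ < \kappa$ (as an initial segment) into $x_{n+1}$ before closing. Then $x = \bigcup_n x_n$ is closed under $f$ and $x \cap \kappa$ is an initial segment of $\kappa$. This requires ensuring $x_n \cap \kappa$ is eventually an ordinal, which we achieve by alternately closing under $f$ and filling in $\sup(x_n\cap\kappa)$. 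We also get $|x| = \sup_n |x_n| \leq |x \cap \kappa|$. So $x \cap \kappa \in \kappa$, and $|x| \leq |x\cap\kappa|$, giving $\otp(x) < |x|^+ \leq (x\cap\kappa)^+$. Hence $x \in S_<$.

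The only delicate point is the bookkeeping in the second part, namely keeping $x\cap\kappa$ an ordinal while making it large in cardinality relative to $x$. Interleaving the two closure operations over $\omega$ steps handles this. The fact that $x \cap\kappa$ then has cofinality $\omega$ is harmless, since the statement asks only for stationarity and not for $x \cap \kappa$ to be regular.
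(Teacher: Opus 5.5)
Your proof is correct, and the $S_>$ half is essentially the paper's argument: the paper uses injections $\pi_\alpha:\alpha\to\kappa$ and the club of $x$ with $\pi_\alpha[x\cap\alpha]\subseteq x\cap\kappa$ for $\alpha\in x$. This is the dual of your surjection condition, which you should restrict to $\gamma\in x\setminus\kappa$. The paper then looks at the $(x\cap\kappa)^+$-th element of $x$ where you take a supremum. For $S_<$ the paper only cites Kanamori, Proposition 25.5, and your $\omega$-chain with $|x_n|^+\subseteq x_{n+1}$ and the gaps in $x_n\cap\kappa$ filled in is the standard argument behind that citation; this is where $\kappa$ being a limit cardinal is actually used, and it is not needed for building your club $Z$.
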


%----------------------------------------------------------------------------------

\begin{proof} %----------------------------------- 
The stationarity of $S_< ( \kappa , \kappa^+ )$ is standard. See \cite{K} Proposition 25.5. Here we prove that $S_> ( \kappa , \kappa^+ )$ is non-stationary.

For each $\alpha < \kappa^+$ take an injection $\pi_\alpha : \alpha \to \kappa$, and let $Z$ be the set of all $x \in \mcal{P}_\kappa \kappa^+$ such that $\pi_\alpha [ x \cap \alpha ] \subseteq x \cap \kappa$ for any $\alpha \in x$. Then it is easy to check that $Z$ is club in $\mcal{P}_\kappa \kappa^+$. We show that $S_> ( \kappa , \kappa^+ ) \cap Z = \emptyset$.

Suppose $x \in S_> ( \kappa , \kappa^+ )$. Let $\alpha$ be the $( x \cap \kappa )^+$-th element of $x$. Then $\pi_\alpha [ x \cap \alpha ] \not\subseteq x \cap \kappa$ since $| x \cap \alpha | = ( x \cap \kappa )^+$, and $\pi_\alpha$ is injective. So $x \notin Z$.
\end{proof}%-------------------------------------

On the other hand, $S( \kappa , \kappa^+ )$ can be either stationary or non-stationary. If $\kappa$ is $\kappa^+$-supercompact, then we have the following.

%----------------------------------------------------------------------------------

\begin{lemma} \label{lem:S_stat_spcmpct} 
Suppose $\kappa$ is a $\kappa^+$-supercompact cardinal. Then $S(\kappa, \kappa^{+})$ is $n$-stationary in $\mcal{P}_{\kappa}{\kappa^+}$ for any $n < \omega$.
\end{lemma}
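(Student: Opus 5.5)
The plan is to reuse the argument of Proposition \ref{prop:spcmpct_n_stat}, with $\lambda = \kappa^+$. Fix a normal $\kappa$-ultrafilter $U$ over $\mcal{P}_\kappa \kappa^+$ and let $j : V \to M = \mrm{Ult}(V,U)$ be the ultrapower map, so that ${}^{\kappa^+} M \subseteq M$.

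We will show that $S(\kappa,\kappa^+) \cap d_m(T) \in U$ for every $m < n$ and every $m$-stationary $T \subseteq \mcal{P}_\kappa \kappa^+$. Since $U$ is a filter, $S(\kappa,\kappa^+) \cap d_m(T)$ is then nonempty. Any element $x$ of it lies in $S(\kappa,\kappa^+)$, has $x \cap \kappa$ weakly inaccessible, and reflects $T$ at the $m$-th level. This is exactly $n$-stationarity of $S(\kappa,\kappa^+)$, and it holds for every $n$ at once.

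For the membership $d_m(T) \in U$, Proposition \ref{prop:spcmpct_n_stat} already gives the result. So it remains to check that $S(\kappa,\kappa^+) \in U$, that is, $j[\kappa^+] \in j(S(\kappa,\kappa^+))$. By elementarity, this means checking in $M$ three things:
\begin{itemize}
\item[(a)] $j[\kappa^+] \in \mcal{P}_{j(\kappa)} j(\kappa^+)$;
\item[(b)] $j[\kappa^+] \cap j(\kappa) = \kappa \in j(\kappa)$;
\item[(c)] $\otp(j[\kappa^+]) = \kappa^+$ computed in $M$.
\end{itemize}
Items (a) and (b) are the standard facts already used in Proposition \ref{prop:spcmpct_n_stat}, and the order type of $j[\kappa^+]$ is $\kappa^+$ since $j$ is order preserving.

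The one point needing care is that $(\kappa^+)^M = \kappa^+$, so that in (c) the order type equals $(j[\kappa^+] \cap j(\kappa))^{+M}$. This follows from closure: ${}^{\kappa} M \subseteq M$ means every surjection from $\kappa$ onto an ordinal $< \kappa^+$ lies in $M$. Hence $M$ and $V$ agree on which ordinals $\le \kappa^+$ are cardinals, and $\kappa^{+M} = \kappa^+$. With this, $j[\kappa^+] \in j(S(\kappa,\kappa^+))$ and $S(\kappa,\kappa^+) \in U$.

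I expect no real obstacle. The only subtlety is this agreement of $\kappa^+$ between $M$ and $V$, and it is immediate from ${}^{\kappa^+}M \subseteq M$.
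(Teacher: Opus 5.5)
Your proposal is correct and follows the paper's proof: you show $S(\kappa,\kappa^+)\in U$ by verifying $j[\kappa^+]\in j(S(\kappa,\kappa^+))$, and then combine this with $d_m(T)\in U$ from Proposition~\ref{prop:spcmpct_n_stat}. You only add details the paper leaves implicit, such as $(\kappa^+)^M=\kappa^+$; the trivial case $n=0$, which your recursive-clause argument does not literally cover, follows from fineness of $U$ or from $1$-stationarity.
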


%----------------------------------------------------------------------------------

\begin{proof} %-----------------------------------
Let $U$ be a normal $\kappa$-ultrafilter over $\mcal{P}_\kappa \kappa^+$ and $j : V \to \mrm{Ult} ( V , U )$ be the ultrapower map. Then $j[ \kappa^+ ] \in j( S( \kappa , \kappa^+ ) )$, and so $S( \kappa , \kappa^+ ) \in U$. Then, by Proposition \ref{prop:spcmpct_n_stat}, $S( \kappa , \kappa^+ )$ is $n$-stationary for all $n < \omega$.
\end{proof} %-------------------------------------------------------

Baumgartner proved that if $0^\sharp$ does not exist, then $S( \kappa , \kappa^+ )$ is non-stationary for every regular uncountable $\kappa$; see \cite{K} Theorem 25.7. Similarly, Shioya proved that $S( \kappa , \kappa^+ )$ can be made non-stationary by a forcing with very useful properties. We will use Shioya's forcing in the proof of the main theorem.

\begin{fact}[Shioya \cite{Sh}]\label{fact:Shioya}
Let $\kappa$ be an inaccessible cardinal. Then there is a $\kappa$-strategically closed $\kappa^+$-c.c.~forcing poset $\mbb{S}_\kappa$ of size $\kappa^+$ which forces $S( \kappa , \kappa^+ )$ to be non-stationary in $\mcal{P}_\kappa \kappa^+$.
\end{fact}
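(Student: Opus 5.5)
The statement is attributed to Shioya \cite{Sh}, and I would reprove it by a ``threading'' forcing whose conditions are initial segments of length $<\kappa^+$, in the spirit of $\mbb{Q}(\kappa,\nu)$ of Lemma \ref{lem:Q_basics}. The exact form of the conditions is the step I have not pinned down, so what follows is a plan for the shape of the poset rather than a verified construction.

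\emph{The obstruction to kill.} Work in $V$, where $\kappa^{<\kappa}=\kappa$ since $\kappa$ is inaccessible. For each $\alpha$ with $\kappa\le\alpha<\kappa^+$, fix a bijection $e_\alpha:\kappa\to\alpha$. As in the proof of Lemma \ref{lem:S_<_S_>_stat}, the set $Z_0$ of all $x$ with $x\cap\kappa\in\kappa$ and $x\cap\alpha=e_\alpha[x\cap\kappa]$ for every $\alpha\in x\setminus\kappa$ is club. Take $x\in Z_0$ and put $\mu:=x\cap\kappa$. Then $|x\cap\alpha|=\mu$ for each $\alpha\in x$, so $x\in S(\kappa,\kappa^+)$ exactly when $\otp(x)=\mu^+$. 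In that case $\beta:=\sup x\notin x$ and $\cof(\beta)=\mu^+$. So it suffices to add a function whose closure points $x$ never have $\cof(\sup x)=(x\cap\kappa)^+$.

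\emph{The forcing $\mbb{S}_\kappa$.} A condition will be a sequence $p=\langle c^p_\beta:\beta<\delta_p\rangle$ with $\delta_p<\kappa^+$. For each $\beta$ with $\cof(\beta)<\kappa$, the set $c^p_\beta$ is a club in $\beta$ of order type $\cof(\beta)$; for other $\beta$ we set $c^p_\beta=\emptyset$. The requirement replacing (ii) in the definition of $\mbb{Q}$ is this: whenever $\beta<\delta_p$ has $\cof(\beta)=\rho^+$ for some $\rho<\kappa$, the set $c^p_\beta$ must not be ``threaded'' by an $x$ closed under the $e_\alpha$'s with $x\cap\kappa=\rho$. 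Concretely, I would require $\otp(c^p_\beta\cap\gamma)<\rho$ for cofinally many $\gamma\in\lim c^p_\beta$, or an equivalent coherence clause arranged so that limits of short sequences can be closed off. In $V[G]$, define $F(\alpha):=c_\alpha\cup e_\alpha[\ldots]$ from the generic sequence. Closure points $x$ of $F$ with $\sup x=\beta$ then satisfy $x\supseteq c_\beta\cap\sup x$ cofinally, which forces $\cof(\beta)\le|x|<\mu^+$. Hence the closure points avoid $S(\kappa,\kappa^+)$, and $S(\kappa,\kappa^+)$ is non-stationary in $V[G]$.

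\emph{Verifying the properties.}
\begin{itemize}
\item Size $\kappa^+$: each condition is a sequence of length $<\kappa^+$ of subsets of ordinals $<\kappa^+$, each of size $<\kappa$. Using $\kappa^{<\kappa}=\kappa$ and GCH at $\kappa$, there are only $\kappa^+$ such conditions.
\item $\kappa^+$-c.c.: this follows from the size bound.
\item $\kappa$-strategic closure: copy the proof of Lemma \ref{lem:Q_basics}(2). Even always strictly extends at successor stages. At a limit stage $\alpha<\kappa$ with $\delta:=\sup_{\xi<\alpha}\delta_{p_\xi}$, Even plays the union, together with $c_\delta:=\{\delta_{p_\xi}:\xi<\alpha \text{ limit}\}$, which is a club in $\delta$ of order type $\cof(\alpha)<\kappa$. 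One must check that this $c_\delta$ still meets the non-threading requirement. The key point is that $\delta$ has a canonical club, namely Even's own limit moves, which is exactly the trick used in the last case of Lemma \ref{lem:Q_basics}.
\end{itemize}

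The main obstacle is the precise formulation of the side condition on $c^p_\beta$ when $\cof(\beta)=\rho^+$. It must be strong enough that every closure point $x$ with $\otp(x)=(x\cap\kappa)^+$ is excluded. It must also be weak enough that density holds, so that $\delta_p$ can always be extended, and that the limit moves of the strategy satisfy it. I would first try the version in which the requirement is checked only against $x\in Z_0$ with $x\cap\kappa=\rho$, as $\mbb{Q}$ does with stationarity. If that fails, I would fall back on Shioya's original poset as given in \cite{Sh}.
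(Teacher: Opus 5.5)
The paper gives no proof of this Fact; it is quoted from Shioya \cite{Sh}. Your sketch is not a proof either, and the gaps go beyond the undefined side condition you acknowledge.

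\emph{The side condition and the killing argument.} The one concrete clause you write down is unsatisfiable. If $c$ is a club in $\beta$ of order type $\rho^+$, then $\otp(c\cap\gamma)\geq\rho$ for every $\gamma\in\ltpt(c)$ beyond the $\rho$-th element of $c$. So already at $\beta=\omega_1$ (where $\rho=\omega$) no condition could have $\delta_p>\omega_1$. The killing argument also fails as written. For $x\in S(\kappa,\kappa^+)\cap Z_0$ you have $\beta=\sup x\notin x$, so closure under $F$ says nothing about $c_\beta$. In fact $x\cap c_\beta$ is \emph{always} bounded in $\beta$, because $|x|=\mu<\mu^+=\cof(\beta)$. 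Hence ``$x\supseteq c_\beta\cap\sup x$ cofinally'' is false, not merely unproved. The only mechanism that works is the one in Lemma~\ref{lem:psquare_S}. For club many $x$, $x$ is stationary in $\beta$ (Lemma~\ref{lem:gamma_closed}), so it meets $\ltpt(c_\beta)$ at some $\gamma$ with $\otp(c_\beta\cap\gamma)\geq\mu$. Closure under $\gamma\mapsto\otp(c_\gamma)$ together with coherence $c_\gamma=c_\beta\cap\gamma$ then puts that order type into $x\cap\kappa=\mu$, a contradiction. So you would in effect be forcing a partial square on $\bigcup_{\rho<\kappa}E^{\kappa^+}_{\rho^+}$ by initial segments.

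\emph{The chain condition.} This is the decisive problem. A poset of size $\kappa^+$ only has the $\kappa^{++}$-c.c., not the $\kappa^+$-c.c. For posets of sequences of length $<\kappa^+$ ordered by end-extension, the $\kappa^+$-c.c.\ genuinely fails. Two conditions with the same domain that differ at one coordinate are incompatible. So as long as the side condition leaves $c_\beta$ free at the $\kappa$ many points of countable cofinality below some $\delta\geq\kappa$, as yours is meant to, there are $2^\kappa$ pairwise incompatible conditions of length $\delta$. The same count shows that $\mbb{Q}(\kappa,\kappa^+)$ is not $\kappa^+$-c.c. It also shows that your size bound silently uses $2^\kappa=\kappa^+$, which the Fact does not assume (harmless under the paper's GCH, but not for the statement as given).

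The $\kappa^+$-c.c.\ is not optional. It is hypothesis (ii) of Fact~\ref{fact:Easton_it_basics} and hypothesis (II) of Proposition~\ref{prop:pres_strcmpct}. In Claim~1 of the latter proof, it is exactly what bounds the number of maximal antichains by $k(\kappa)^+$, so that a generic can be built in $M[G_\kappa]$ by $\kappa^+$-strategic closure.

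So modelling $\mbb{S}_\kappa$ on $\mbb{Q}$ is the wrong template. Getting the $\kappa^+$-c.c.\ requires conditions of size $<\kappa$ and a $\Delta$-system argument using $\kappa^{<\kappa}=\kappa$. Such conditions also give size $(\kappa^+)^{<\kappa}=\kappa^+$ without any GCH assumption. With small conditions one can only expect $\kappa$-strategic closure, which matches the statement. The real work is designing the small conditions so that the generic object has no closure point in $S(\kappa,\kappa^+)$, while Even can still ensure that unions along plays of length $<\kappa$ remain conditions. Your plan does not address this.
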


It is well-known that if $\square_\kappa$ holds, then $S( \kappa , \kappa^+ )$ is non-stationary. Following the ideas in \cite{Kr,S}, we will use partial version of this fact.

\begin{definition*}
Suppose $\kappa$ is an uncountable cardinal, and $E \subseteq \ltpt ( \kappa^+ )$. Then $\square_{\kappa}^E$ is the statement that there exists a sequence $\langle c_{\alpha} : \alpha \in E \rangle$ satisfying the following for all $\alpha , \alpha ' \in E$.
\begin{itemize}
\item[(i)] $c_{\alpha}$ is a club subset of $\alpha$ of order-type $\leq \kappa$.
\item[(ii)] $c_{\alpha} \cap \beta = c_{\alpha'} \cap\beta$ for any $\beta \in ( \ltpt (c_{\alpha}) \cup \{\alpha\} ) \cap ( \ltpt (c_{\alpha'}) \cup \{\alpha'\} )$.
\end{itemize}
\end{definition*}

Before discussing our use of $\square_{\kappa}^E$, we recall a useful fact concerning $\mcal{P}_\kappa \kappa^+$.

\begin{lemma}\label{lem:gamma_closed}
Suppose $\kappa$ is a regular uncountable cardinal. Then there are club many $x \in \mcal{P}_\kappa \kappa^+$ such that
\begin{itemize}
\item[(i)] $x \cap \kappa \in \kappa$,
\item[(ii)] $x$ is $\gamma$-closed for any regular $\gamma \in x \cap \kappa$ with $\gamma \neq \cof ( x \cap \kappa ) , \cof ( \sup x )$.
\end{itemize}
\end{lemma}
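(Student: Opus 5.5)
We take a Skolem-type closure club. For each $\alpha<\kappa^+$ fix a bijection $e_\alpha:\kappa\to\alpha$ when $\kappa\le\alpha$, and for each regular $\gamma<\kappa$ and each $\alpha<\kappa^+$ with $\cof(\alpha)=\gamma$ fix a cofinal increasing map $s_\alpha:\gamma\to\alpha$. Let $Z$ be the set of all $x\in\mcal{P}_\kappa\kappa^+$ such that $x\cap\kappa\in\kappa$ and $x$ is closed under the following operations: $(\alpha,\xi)\mapsto e_\alpha(\xi)$ and $(\alpha,\eta)\mapsto e_\alpha^{-1}(\eta)$ for $\alpha\in x$, $\xi\in x\cap\kappa$, $\eta\in x\cap\alpha$; $\alpha\mapsto\cof(\alpha)$; and $(\alpha,\xi)\mapsto s_\alpha(\xi)$ for $\xi<\cof(\alpha)$. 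The set of $x$ with $x\cap\kappa\in\kappa$ is club, and closure under countably many finitary functions gives a club, so $Z$ is club. We will show that every $x\in Z$ satisfies (ii).

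Fix $x\in Z$ and a regular $\gamma\in x\cap\kappa$ with $\gamma\ne\cof(x\cap\kappa),\cof(\sup x)$. Let $y\subseteq x$ have order type $\gamma$, and put $\beta:=\sup y$. We must show $\beta\in x$. Let $\alpha:=\min(x\setminus\beta)$; it exists when $\beta<\sup x$. If $\beta=\sup x$, then $\cof(\sup x)=\gamma$, which is excluded. If $\alpha=\beta$ we are done, so suppose $\alpha>\beta$ and derive a contradiction.

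The case $\alpha<\kappa$ is easy. Then $\beta<\alpha\in x\cap\kappa$, so $\beta\in x\cap\kappa$ because $x\cap\kappa$ is an ordinal, contradicting $\alpha>\beta$. In particular, if $\beta\le x\cap\kappa$, then either $\beta\in x$ or $\beta=x\cap\kappa$; the latter would give $\cof(x\cap\kappa)=\gamma$, which is excluded.

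Now suppose $\alpha\ge\kappa$, and consider the map $e_\alpha^{-1}$ restricted to $x\cap\alpha$. By closure of $x$, it maps $x\cap\alpha$ onto $x\cap\kappa$, so it is a bijection $x\cap\alpha\to x\cap\kappa$. The set $x\cap\alpha=x\cap\beta$ has $y$ cofinal in it, of order type $\gamma$. This by itself does not immediately contradict anything, so we instead use $\cof(\alpha)$.

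Let $\delta:=\cof(\alpha)\in x$. Suppose first that $\delta<\kappa$, so $\delta\in x\cap\kappa$. The points $s_\alpha(\xi)$ for $\xi\in x\cap\delta$ lie in $x\cap\alpha\subseteq\beta$, and we need them to be cofinal in $\alpha$, which requires $x\cap\delta$ to be cofinal in $\delta$. If $\delta<x\cap\kappa$, then $\delta\subseteq x$, so all of $s_\alpha[\delta]$ lies in $x\cap\alpha\subseteq\beta<\alpha$; this contradicts cofinality of $s_\alpha$. If $\delta\ge x\cap\kappa$, the fact that $\delta\in x$ forces $\delta<\kappa$ but $\delta\notin x\cap\kappa$, which is impossible. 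Hence $\delta<\kappa$ leads to a contradiction.

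The main obstacle is the case $\delta=\cof(\alpha)=\kappa$. For this we use the $e_\alpha$-enumeration and modify the club. We replace $e_\alpha$ by a continuous increasing cofinal map $c_\alpha:\kappa\to\alpha$ and require closure of $x$ under $c_\alpha$ and under $\xi\mapsto\min\{\zeta:c_\alpha(\zeta)>\xi\}$. Then $x\cap\alpha$ is cofinal in $c_\alpha[x\cap\kappa]$, and $\sup(x\cap\alpha)=\sup c_\alpha[x\cap\kappa]$. This supremum has cofinality $\cof(x\cap\kappa)$: if it is reached at the limit $c_\alpha(x\cap\kappa)$ it has cofinality $\cof(x\cap\kappa)$, and if $x\cap\kappa$ is a successor it is $<\alpha$ and lies in $x$. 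So $\beta$ would have cofinality $\cof(x\cap\kappa)\neq\gamma$, a contradiction. We expect this bookkeeping to be the delicate part of the argument.
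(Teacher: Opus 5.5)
Your proof is correct and is essentially the paper's argument, with an explicit list of Skolem functions in place of an elementary submodel $M\prec H_\theta$. It uses the same $\alpha=\min(x\setminus\sup y)$ and the same case split: when $\cof(\alpha)<\kappa$, a cofinal sequence of length $\cof(\alpha)<x\cap\kappa$ lies inside $x$; when $\cof(\alpha)=\kappa$, a continuous cofinal $c_\alpha$ gives $\sup(x\cap\alpha)=c_\alpha(x\cap\kappa)$. One small addition is needed: include closure under the predecessor map, which the paper gets from elementarity, since otherwise $\alpha$ may be a successor ordinal and your $s_\alpha$ is undefined there under the usual convention that regular cardinals are infinite; the bijections $e_\alpha$ can be dropped.
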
 

\begin{proof}
Let $\theta$ be a sufficiently large regular cardinal. It suffices to prove that if $M \prec \langle H_\theta , \in , \kappa \rangle$, $|M| < \kappa$, and $M \cap \kappa \in \kappa$, then $x = M \cap \kappa^+$ satisfies (ii).

Suppose $M \prec \langle H_\theta , \in , \kappa \rangle$, $|M| < \kappa$, and $\mu := M \cap \kappa \in \kappa$, and let $x := M \cap \kappa^+$. Suppose $\gamma$ is a regular cardinal $< \mu$ with $\gamma \neq \cof ( \mu ) , \cof ( \sup x )$. We show that $x$ is $\gamma$-closed. For a contradiction, assume $y \subseteq x$, $\otp (y) = \gamma$, and $\sup (y) \notin x$.

First note that $y$ is bounded in $x$ since $\otp (y) = \gamma \neq \cof ( \sup x )$. Let $\alpha := \min ( x \setminus \sup (y) )$. Then
\[
\sup ( M \cap \alpha ) = \sup ( x \cap \alpha ) = \sup (y) < \alpha \in x = M \cap \kappa^+ .
\]
Note that $\alpha$ is a limit ordinal by the elementarity of $M$. Note also that $\cof ( \alpha ) \in M$. The rest of the proof splits into two cases.

First suppose $\cof ( \alpha ) < \kappa$. Then $\cof ( \alpha ) \in M \cap \kappa$, and so $\cof ( \alpha ) \subseteq M$. Take a cofinal subset $A \in M$ of $\alpha$ with $\otp (A) = \cof ( \alpha )$. Then $A \subseteq M$ since $\cof ( \alpha ) \subseteq M$. Then $\sup ( M \cap \alpha ) \geq \sup (A) = \alpha$. This contradicts that $\sup ( M \cap \alpha ) < \alpha$.

Next suppose $\cof ( \alpha ) = \kappa$. Take a sequence $\langle \alpha_\xi \mid \xi < \kappa \rangle \in M$ which is increasing continuous and cofinal in $\alpha$. Then it easily follows from the elementarity of $M$ that $\sup ( M \cap \alpha ) = \alpha_{M \cap \kappa} = \alpha_\mu$. Then
\[
\gamma = \cof ( \sup (y) ) = \cof ( \sup ( M \cap \alpha ) ) = \cof ( \mu ) .
\]
This contradicts that $\gamma \neq \cof ( \mu )$.
\end{proof} %----------------------------

We now state one important property of $\square_{\kappa}^E$, that will be relevant for this paper. This was essentially proved in \cite{Kr}, but using Lemma \ref{lem:gamma_closed}, we can provide an easy proof for completeness.

\begin{lemma} \label{lem:psquare_S} 
Let $\kappa$ be a weakly inaccessible cardinal. Suppose $E \subseteq \ltpt ( \kappa^+ )$, and $\square_\kappa^E$ holds. Then the set $\{ x \in S( \kappa , \kappa^+ ) \mid \sup x \in E \}$ is non-stationary in $\mcal{P}_\kappa \kappa^+$.
\end{lemma}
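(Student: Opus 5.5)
We intersect with the club from Lemma \ref{lem:gamma_closed}, together with a club on which the square sequence is absorbed, and then show that no $x$ in this club with $\sup x \in E$ can have order type $(x\cap\kappa)^+$.

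Fix a witness $\langle c_\alpha : \alpha \in E \rangle$ to $\square_\kappa^E$. For each $\alpha < \kappa^+$ fix an injection $\pi_\alpha : \alpha \to \kappa$. Let $Z$ be the set of $x \in \mcal{P}_\kappa \kappa^+$ such that:
\begin{itemize}
\item[(a)] $x$ satisfies (i) and (ii) of Lemma \ref{lem:gamma_closed};
\item[(b)] $\pi_\alpha[x\cap\alpha] \subseteq x \cap \kappa$ for all $\alpha \in x$;
\item[(c)] for each $\alpha \in x\cap E$, letting $\langle c_\alpha(\xi) : \xi < \otp(c_\alpha)\rangle$ be the increasing enumeration, we have $c_\alpha(\xi) \in x$ for all $\xi \in x$ with $\xi < \otp(c_\alpha)$.
\end{itemize}
Each of these is a closure condition under countably many (or fewer than $\kappa$-ary) functions, so $Z$ is club.

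Now suppose $x \in Z \cap S(\kappa,\kappa^+)$ and $\delta := \sup x \in E$. Let $\mu := x\cap\kappa$, so $\otp(x) = \mu^+$. Then $\cof(\delta) = \cof(\mu^+) = \mu^+$, since $\mu^+$ is regular. Since $\mu^+ \ne \cof(\mu)$, this cofinality is the crux.

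We look at $c_\delta$, which is club in $\delta$ with order type at most $\kappa$, so $\otp(c_\delta)$ has cofinality $\mu^+$. We want to find many points of $\ltpt(c_\delta)$ of cofinality $\gamma$, for a suitable regular $\gamma < \mu$, that lie in $x$. Take $\gamma := \omega_1$, or $\omega$ if $\cof(\mu) \ne \omega$; in either case $\gamma$ is a regular cardinal in $\mu$, and $\gamma \ne \cof(\mu), \cof(\delta)$.

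By (a), $x$ is $\gamma$-closed. Since $\cof(\delta) = \mu^+ > \gamma$, the set $x$ is $\gamma$-closed and cofinal in $\delta$, and $c_\delta$ is club, so $\ltpt(c_\delta) \cap x \cap \mathrm{cof}(\gamma)$ is unbounded in $\delta$. For any such $\beta \in x$ with $\beta \in \ltpt(c_\delta)$, coherence gives $\beta \in E$... but this is not automatic, since coherence only involves points of $E$.

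To avoid needing $\beta \in E$, I would use $c_\delta$ itself: $c_\delta \cap \beta$ is club in $\beta$ with order type $\otp(c_\delta\cap\beta) < \otp(c_\delta) \le \kappa$.

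\textbf{Main obstacle.} The point is to bound $|x\cap\delta|$ by $\mu$ to get a contradiction. The intended route is as follows. Consider $\Xi := \{\xi : c_\delta(\xi) \in x\}$. Using $\gamma$-closure of $x$ and of $c_\delta$, the set $\Xi$ is $\gamma$-closed. Since $\delta \notin x$, condition (c) cannot be used directly. Instead, pick $\beta \in \ltpt(c_\delta)\cap x$ large. I would try to adjoin the map $\beta \mapsto c_\beta$ for $\beta \in E$ into the closure, and arrange via coherence that $c_\beta = c_\delta \cap \beta$ for $\beta \in \ltpt(c_\delta)$. This uses that we may assume $E$ is closed under such limit points, by extending the sequence to $E \cup \bigcup_{\alpha\in E}\ltpt(c_\alpha)$ via $c_\beta := c_\alpha\cap\beta$, which is well defined by coherence.

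Then, by (c) applied to $\beta$, we get $x \cap \beta \supseteq \{c_\beta(\xi) : \xi \in x\cap\otp(c_\beta)\}$, and $\otp(c_\beta) < \kappa$. I also want the reverse kind of control: $x\cap c_\beta$ is indexed by $x\cap\kappa = \mu$. Combined with $\gamma$-closure, this shows that the indices $\{\xi : c_\delta(\xi)\in x\}$ form a subset of $\otp(c_\delta) \le \kappa$ that is contained in $x\cap\kappa=\mu$, whence $|x\cap c_\delta| \le \mu$.

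Finally, $x \cap \delta$ is covered by the $\mu$ many intervals between consecutive points of $x\cap c_\delta$. By (b), each interval meets $x$ in at most $\mu$ points, so $|x| \le \mu$. This contradicts $\otp(x) = \mu^+$.
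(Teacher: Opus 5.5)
Your strategy is the paper's. You pass to the coherent extension $\langle c_\beta:\beta\in D\rangle$ with $D=E\cup\bigcup_{\alpha\in E}\ltpt(c_\alpha)$. You use $\gamma$-closure of $x$ and $\cof(\delta)=\mu^+>\gamma$ to get $x\cap\ltpt(c_\delta)$ cofinal in $\delta$. Then you want to show that $x$ meets $c_\delta$ only at indices below $\mu$.

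\textbf{The gap.} The step that carries the contradiction is only announced (``I also want the reverse kind of control''), and your club $Z$ does not supply it. Clause (c) points the wrong way. The implication $\xi\in x\Rightarrow c_\alpha(\xi)\in x$ only puts more points of $c_\beta$ into $x$, which is a lower bound on $x\cap c_\delta$. What the contradiction needs is an upper bound: the indices $\otp(c_\delta\cap\eta)$ of the points $\eta\in x\cap c_\delta$ must lie below $\mu$. Nothing in (a)--(c) gives this, so as written the argument does not reach a contradiction. Separately, (c) is stated for $\alpha\in x\cap E$ but is applied to $\beta\in D$; you notice this only afterwards.

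\textbf{The fix.} Add one closure condition: $\otp(c_\beta)\in x$ for every $\beta\in x\cap D$. This is closure under a single function on $\kappa^+$, hence a club condition. Now take any $\beta\in x\cap\ltpt(c_\delta)$. Coherence gives
\[
\otp(c_\delta\cap\beta)=\otp(c_\beta)<\otp(c_\delta)\le\kappa,
\]
so $\otp(c_\delta\cap\beta)\in x\cap\kappa=\mu$. Since such $\beta$ are cofinal in $\delta$, this gives $\otp(c_\delta)\le\mu$. That contradicts $\cof(\otp(c_\delta))=\mu^+$.

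This is exactly the paper's proof. There the same closure is automatic from elementarity: $\vec c$ is in the structure, so $c_\beta\in M$ and hence $\otp(c_\beta)\in M\cap\kappa$. Once you have it, the set $\Xi$, clauses (b) and (c), and the interval counting are all unnecessary.

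Two minor points. First, the set of $x$ satisfying (i) and (ii) of Lemma~\ref{lem:gamma_closed} only contains a club; it need not be one. Second, you should also require something like $\omega_1\in x$. That guarantees $\mu>\omega_1$, so your $\gamma$ lies in $x\cap\kappa$ as Lemma~\ref{lem:gamma_closed}(ii) needs.
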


%----------------------------------------------------------------------------------

\begin{proof}
Let $\langle c_\alpha : \alpha \in E \rangle$ be a witness of $\square_\kappa^E$. Let $D := E \cup \bigcup_{\alpha \in E} \ltpt ( c_\alpha )$. For each $\beta \in D \setminus E$, taking $\alpha \in E$ with $\beta \in \ltpt ( c_\alpha )$, let $c_\beta := c_\alpha \cap \beta$. Note that $c_\beta$ is independent of the choice of $\alpha$. Then $\vec{c} := \langle c_\alpha : \alpha \in D \rangle$ satisfies the following.
\begin{itemize}
\item[(i)] For all $\alpha \in D$, $c_\alpha$ is a club subset of $\alpha$ of order-type $\leq \kappa$.
\item[(ii)] If $\alpha \in D$, and $\beta \in \ltpt ( c_\alpha )$, then $\beta \in D$, and $c_\beta = c_\alpha \cap \beta$.
\end{itemize}

Let $\theta$ be a sufficiently large regular cardinal and $\mcal{M}$ be $\langle \mcal{H}_\theta , \in , \kappa , \vec{c} \rangle$. It suffices to prove that if $M \prec \mcal{M}$, $|M| < \kappa$, and $\sup ( M \cap \kappa^+ ) \in E$, then $M \cap \kappa^+ \notin S( \kappa , \kappa^+ )$. On the contrary assume $M \prec \mcal{M}$, $|M| < \kappa$, $\sup ( M \cap \kappa^+ ) \in E$, and $M \cap \kappa^+ \in S( \kappa , \kappa^+ )$.

Note that $\mu := M \cap \kappa$ is an uncountable limit cardinal. Then by Lemma \ref{lem:gamma_closed}, $M \cap \kappa^+$ is $\gamma$-closed for some regular $\gamma < \mu$. So $M \cap \kappa^+$ is stationary in $\alpha := \sup ( M \cap \kappa^+ )$. Here note that $\cof ( \alpha ) = \mu^+$ since $M \cap \kappa^+ \in S( \kappa , \kappa^+ )$. Then we can take $\beta \in \ltpt ( c_\alpha ) \cap M$ with $\otp ( c_\alpha \cap \beta ) > \mu$. But $c_\alpha \cap \beta = c_\beta \in M$, and so $\otp ( c_\alpha \cap \beta ) \in M \cap \kappa = \mu$. This is a contradiction.
\end{proof}

Now, we present a result showing that a  partial square
principle can be forced while preserving the supercompactness of $\kappa$. This forcing will be used as the first step in the proof of the main theorem.

\begin{fact}[Apter \& Cummings \cite{AC}]\label{fact:Apter_Cummings}
Assume that $\kappa$ is a supercompact cardinal. Then there is a forcing poset of size $\kappa^+$ which forces the following.
\begin{itemize}
\item[(i)] $\kappa$ is supercompact.
\item[(ii)] $\square_\kappa^E$ holds for the set
\[
E := \ltpt ( \kappa^+ ) \setminus \bigcup \{ E^{\kappa^+}_{\alpha^+} : \mbox{$\alpha$ is a measurable cardinal $< \kappa$} \} .
\]
\end{itemize}
Moreover, if GCH holds, then $\mbb{P}$ forces GCH.
\end{fact}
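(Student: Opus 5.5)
The plan is to realize the forcing as an Easton support iteration $\mbb{P} = \mbb{P}_{\kappa+1}$ with nontrivial stages only at inaccessible $\alpha \leq \kappa$. At each such stage we force with $\dot{\mbb{Q}}_\alpha$, the poset adding $\square_\alpha^{E_\alpha}$ by initial segments, where
\[
E_\alpha := \ltpt(\alpha^+) \setminus \bigcup \{ E^{\alpha^+}_{\beta^+} : \mbox{$\beta < \alpha$ measurable} \}
\]
is computed in the intermediate model. A condition is a sequence $\langle c_\gamma : \gamma \in E_\alpha \cap (\delta+1)\rangle$ for some $\delta < \alpha^+$ satisfying (i) and (ii) of the definition of $\square^E_\alpha$, ordered by end-extension. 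The final poset is $\mbb{P}_\kappa * \dot{\mbb{Q}}_\kappa$; its size is $\kappa^+$ on a dense set by Fact \ref{fact:Easton_it_basics}.

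\textbf{Basic properties of $\mbb{Q}_\alpha$.} First I would verify that $\mbb{Q}_\alpha$ is $\alpha^+$-strategically closed. Even plays strictly longer conditions at successor stages. At a limit stage $\eta \leq \alpha$ with top point $\delta = \sup$ of the previous heights, Even must choose $c_\delta$ if $\delta \in E_\alpha$. He lets $c_\delta$ be the closure of the set of earlier tops, which are played limit points; this has order type $\leq \eta \leq \alpha$. Coherence holds because Even inductively chose earlier $c$'s at limit tops as the same threads.

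Next I would check that $\mbb{Q}_\alpha$ is $\alpha^{++}$-c.c. trivially, by size $\alpha^+$ under GCH, and that it has the other hypotheses of Fact \ref{fact:Easton_it_basics}. Strategic closure ($\alpha^+$ adds no new $\alpha$-sequences) together with density arguments shows that the generic yields $\square_\alpha^{E_\alpha}$. GCH and cofinalities are preserved by Fact \ref{fact:Easton_it_basics}(3), together with the fact that $\mbb{Q}_\kappa$ has size $\kappa^+$ and is $\kappa^+$-strategically closed.

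\textbf{Lifting the embedding.} The main work is preserving supercompactness. Fix $\lambda \geq \kappa^+$ and take $j : V \to M$ the ultrapower by a normal measure on $\mcal{P}_\kappa \lambda$, so that ${}^{\lambda} M \subseteq M$ and $\kappa$ is measurable in $M$. Let $G * g$ be generic for $\mbb{P}_\kappa * \dot{\mbb{Q}}_\kappa$.

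Then $j(\mbb{P}_\kappa) = \mbb{P}_\kappa * \dot{\mbb{Q}}_\kappa * \dot{\mbb{P}}_{\kappa+1, j(\kappa)}$, and in $M[G*g]$ the tail is $\lambda^+$-strategically closed by Fact \ref{fact:Easton_it_basics}(2). Under GCH it has at most $\lambda^+$ dense sets counted in $V$. So I build a generic $H$ in $V[G*g]$ by the usual diagonal construction, using $\lambda$-closure of $M[G*g]$, and lift to $j : V[G] \to M[G*g*H]$.

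\textbf{The master condition (main obstacle).} The key step is finding a master condition $p^* \in j(\mbb{Q}_\kappa)$ below $j[g]$. Let $\delta := \sup j[\kappa^+]$. I will set $p^*$ to be $\bigcup j[g]$, augmented at the missing points. The missing points are exactly $\beta = \sup j[\alpha]$ for $\alpha < \kappa^+$ of cofinality $\kappa$: points of cofinality $<\kappa$ are of the form $j(\alpha)$ by continuity. At such a missing point I put $c_\beta := j(c_\alpha) \cap \beta$. Here $\beta$ is a limit point of $j(c_\alpha)$ since $j[c_\alpha] \subseteq j(c_\alpha)$, and $c_\beta$ has order type $\kappa \leq j(\kappa)$.

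The crucial point is the top $\delta$ itself. In $M$ it has cofinality $\kappa^+$ with $\kappa$ measurable, so $\delta \notin j(E_\kappa)$. This is exactly why $E$ omits the cofinalities $\alpha^+$ for $\alpha$ measurable, and it means no club of order type $\leq j(\kappa)$ through $\delta$ is required. Checking the coherence clause (ii) for $p^*$ is the delicate verification: one must compare $c_\beta$ with each $j(c_\gamma)$ and with the other new $c_{\beta'}$, using elementarity and the coherence of $g$.

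Once $p^*$ is in hand, $j(\mbb{Q}_\kappa)$ is $j(\kappa^+)$-strategically closed in $M[G*g*H]$ with $j(\kappa^+) > \lambda$, and has at most $\lambda^+$ dense sets from $V$'s perspective. So a generic $h \ni p^*$ can be built in $V[G*g]$, and $j$ lifts to $V[G*g]$, witnessing $\lambda$-supercompactness. Finally, $\lambda$ is arbitrary.
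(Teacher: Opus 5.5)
The paper gives no proof of this fact; it quotes it from Apter and Cummings, so I judge your argument on its own. It has a genuine gap at the claim that the tail $\dot{\mbb{P}}_{\kappa+1,j(\kappa)}$ is $\lambda^+$-strategically closed in $M[G*g]$.

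\textbf{The tail is not closed enough, and the lift is impossible.} Fact~\ref{fact:Easton_it_basics}(2) only gives closure up to the first nontrivial stage of $j(\mbb{P}_\kappa)$ above $\kappa$, which is the least $M$-inaccessible above $\kappa$. Since ${}^\lambda M\subseteq M$, the models $M$ and $V$ have the same inaccessibles up to $\lambda$. So once $\lambda$ passes an inaccessible $\nu>\kappa$, the tail forces nontrivially at $\nu$; supercompactness requires arbitrarily large $\lambda$, so this cannot be avoided unless there is no inaccessible above $\kappa$.

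This is not just bookkeeping. Let $\nu$ be the least inaccessible above $\kappa$ and suppose $\nu^+\le\lambda$. Coordinate $\nu$ of any $j(\mbb{P}_\kappa)$-generic over $M$ extending $G*g$ produces a $\square_\nu^{E_\nu}$-sequence which, by genericity, is not in $M[G*g]$. Since $\mbb{P}_\kappa*\dot{\mbb{Q}}_\kappa$ is $\kappa^{++}$-c.c., $M[G*g]$ is closed under $\lambda$-sequences in $V[G*g]$, so that sequence is not in $V[G*g]$ either. Hence the lift you describe cannot be built in $V[G*g]$.

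The iteration has to be thinned so that $j(\mbb{P}_\kappa)$ still forces at $\kappa$ but has no nontrivial stage in $(\kappa,\lambda]$. Two ways to do this:
\begin{itemize}
\item Force only at inaccessible closure points of a Laver function $\ell$, and choose $j$ with $\mathrm{rank}(j(\ell)(\kappa))\ge\lambda$.
\item Under the hypotheses of the Main Theorem, force only at measurable $\alpha\le\kappa$. This is exactly what item (iii) in the proof of Proposition~\ref{prop:pres_strcmpct} exploits.
\end{itemize}

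\textbf{A second, smaller gap at the master condition.} You compute $E_\kappa$ in $V[G]$, so $j(E_\kappa)$ is computed in $M[G*g*H]$. Thus $\delta\notin j(E_\kappa)$ requires $\kappa$ to be measurable in $M[G*g*H]$, not merely in $M$. Now $M[G*g]$ and $V[G*g]$ have the same $\mcal{P}(\mcal{P}(\kappa))$, and $H$ adds no subsets of it. So this amounts to the measurability of $\kappa$ in $V[G*g]$, which is part of what you are proving. Establish it first: lift an ultrapower by a normal measure on $\kappa$ through $\mbb{P}_\kappa$ as usual, then through $\mbb{Q}_\kappa$ by the transfer argument of Lemma~\ref{lem:width_generic}. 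That argument needs no master condition, and its proof only uses $\kappa^+$-distributivity.

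\textbf{Lesser inaccuracies.}
\begin{itemize}
\item Your strategy treats only limit stages $\eta\le\alpha$, so it shows $(\alpha+1)$-strategic closure, not $\alpha^+$-strategic closure in the paper's sense.
\item $\mbb{Q}_\alpha$ is not $\alpha^+$-c.c.: different clubs at the top point $\alpha$ give $2^\alpha$ pairwise incompatible conditions. So Fact~\ref{fact:Easton_it_basics} does not apply verbatim, although its conclusions still hold.
\item At the end, $(j(\kappa)+1)$-strategic closure of $j(\mbb{Q}_\kappa)$ is enough, since $j(\kappa)>\lambda^+$.
\end{itemize}

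\textbf{Nothing is missing below $\delta$.} Each $j(p)$ with $p\in g$ is a condition of height $j(\delta_p)$, and these heights are cofinal in $\delta$. So $\bigcup j[g]$ is already defined on all of $j(E_\kappa)\cap\delta$, and it is coherent because any two of its entries lie in a single $j(p)$. The only issue is the top point $\delta$. Your observation that $\cof^{M}(\delta)=\kappa^+$ with $\kappa$ measurable is the right one, once measurability of $\kappa$ in $M[G*g*H]$ is secured.
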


We end this section by showing that $S( \kappa , \kappa^+ )$ does not reflect to any point in $S_< ( \kappa , \kappa^+ )$.

\begin{lemma} \label{lem:S_nonrefl_S<}
Let $\kappa$ be a weakly inaccessible cardinal. Then $S(\kappa, \kappa^{+})\cap \mcal{P}_{x \cap\kappa}{x}$ is non-stationary in $\mcal{P}_{x \cap\kappa} x$ for any $x \in S_{<}(\kappa, \kappa^{+})$ such that $x \cap \kappa$ is a regular uncountable cardinal.
\end{lemma}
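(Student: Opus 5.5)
Fix $x \in S_<(\kappa,\kappa^+)$ with $\mu := x \cap \kappa$ regular uncountable. The idea is that every $z$ in a club subset of $\mcal{P}_\mu x$ has order type too small to lie in $S(\kappa,\kappa^+)$. Since $x \in S_<(\kappa,\kappa^+)$, we have $\otp(x) < \mu^+$, so $|x| \le \mu$. Fix an injection $\pi : x \to \mu$; this exists because $|x| \le \mu$ (and in fact $|x| = \mu$, since $\mu \subseteq x$).

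Next define the club. Let $Z$ be the set of all $z \in \mcal{P}_\mu x$ such that $z \cap \mu \in \mu$ and $\pi[z] \subseteq z \cap \mu$. We check that $Z$ is club in $\mcal{P}_\mu x$ as follows.
\begin{itemize}
\item Closure under increasing unions of length $< \mu$ is immediate, since both defining conditions are preserved by such unions.
\item For unboundedness, given $z_0$, close off $\omega$ times: at each step add $\pi[z_n]$ and then the ordinal $\sup(z_n \cap \mu)$ (or, more carefully, $\sup$ of $z_n \cap \mu$ together with $\pi[z_n]$).
\item The regularity of $\mu$ (uncountable) keeps every set in the construction of size $< \mu$ and every supremum below $\mu$. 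The union $z$ then has $z \cap \mu$ equal to an ordinal and satisfies $\pi[z] \subseteq z \cap \mu$.
\end{itemize}
To get $z \cap \mu$ an ordinal, one also includes $\sup(z_n\cap\mu)$ as an initial segment, i.e., replaces $z_n \cap \mu$ by the full ordinal $\sup(z_n \cap \mu)+1$ at each stage. This is the routine part.

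Now take $z \in Z$ with $z \cap \kappa \in \kappa$. Note that $z \cap \kappa = z \cap \mu$, since $z \subseteq x$ and $x \cap \kappa = \mu$. Then $\pi \restriction z$ injects $z$ into the ordinal $z \cap \kappa$, so $|z| \le |z \cap \kappa|$. Hence $\otp(z) < (z\cap\kappa)^+$, and in particular $\otp(z) \neq (z \cap \kappa)^+$. Therefore $z \notin S(\kappa,\kappa^+)$, so $S(\kappa,\kappa^+) \cap \mcal{P}_\mu x$ is disjoint from the club $Z$ and hence non-stationary. If $z \cap \kappa \notin \kappa$, then $z \notin S(\kappa,\kappa^+)$ by definition.

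The only point needing care is the closure argument for $Z$, in particular ensuring that $z \cap \mu$ is an ordinal below $\mu$. This works because $\mu$ is regular, so the construction of length $\omega$ stays bounded in $\mu$. I expect no serious obstacle.
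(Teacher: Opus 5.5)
Your proof is correct and follows essentially the same route as the paper: since $|x| = \mu$, you exhibit a club of $z \in \mcal{P}_\mu x$ with $|z| \le |z \cap \kappa|$, and such $z$ cannot lie in $S(\kappa,\kappa^+)$. The only difference is in how the club is chosen: the paper takes a bijection $f : \mu \to x$ and the explicit club $\{ f[\alpha] \cup \alpha : \alpha < \mu \}$, so it does not need your $\omega$-step closing-off argument for unboundedness.
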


\begin{proof}
Suppose $x \in S_< ( \kappa , \kappa^+ )$ and $x \cap \kappa$ is a regular uncountable cardinal. Then $|x| = x \cap \kappa$. Take a bijection $f : x \cap \kappa \to x$, and let $Z := \{ f[ \alpha ] \cup \alpha : \alpha < x \cap \kappa \}$. Then it is easy to check that $Z$ is club in $\mcal{P}_{x \cap \kappa} x$. Moreover $Z \cap S( \kappa , \kappa^+ ) = \emptyset$ since $|z| = | z \cap \kappa |$ for all $z \in Z$.
\end{proof}

By the above lemma and Fact \ref{fact:1-stat} (1) we have the following corollary.

\begin{corollary}\label{cor:dS_S<}
$d_1(S(\kappa, \kappa^{+})) \cap S_{<}(\kappa, \kappa^{+})= \emptyset$ for any weakly inaccessible $\kappa$.
\end{corollary}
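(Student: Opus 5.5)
The statement to prove is Corollary~\ref{cor:dS_S<}: $d_1(S(\kappa,\kappa^+))\cap S_<(\kappa,\kappa^+)=\emptyset$ for weakly inaccessible $\kappa$. The argument is by contradiction, combining Lemma~\ref{lem:S_nonrefl_S<} with Fact~\ref{fact:1-stat}~(1) applied at the smaller cardinal $x\cap\kappa$.

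Suppose $x \in d_1(S(\kappa,\kappa^+)) \cap S_<(\kappa,\kappa^+)$. By the definition of $d_1$:
\begin{itemize}
\item $\mu := x\cap\kappa$ is a weakly inaccessible cardinal, in particular regular uncountable;
\item $S(\kappa,\kappa^+)\cap\mcal{P}_\mu x$ is $1$-stationary in $\mcal{P}_\mu x$.
\end{itemize}

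Next, apply Fact~\ref{fact:1-stat}~(1) with $\mu$ in place of $\kappa$ and $x$ in place of $A$. This is legitimate because $\mu$ is weakly inaccessible and $\mu \subseteq x$, since $x\cap\kappa=\mu$. The fact then gives that $S(\kappa,\kappa^+)\cap\mcal{P}_\mu x$ is stationary in $\mcal{P}_\mu x$.

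On the other hand, $x\in S_<(\kappa,\kappa^+)$ and $x\cap\kappa=\mu$ is regular uncountable. So Lemma~\ref{lem:S_nonrefl_S<} says that $S(\kappa,\kappa^+)\cap\mcal{P}_\mu x$ is non-stationary in $\mcal{P}_\mu x$. This contradicts the previous step, so no such $x$ exists.

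There is no real obstacle here. The only point needing care is that Fact~\ref{fact:1-stat} is applied at $\mu$ and base set $x$ rather than at $\kappa$, and that its hypotheses ($\mu$ weakly inaccessible, $\mu\subseteq x$) are met. Both follow directly from membership in $d_1$.
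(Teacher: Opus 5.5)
Your proof is correct and is exactly the paper's argument: the paper derives the corollary directly from Lemma~\ref{lem:S_nonrefl_S<} together with Fact~\ref{fact:1-stat}~(1), applied at $\mu = x\cap\kappa$ with base set $x$, just as you do. The only bookkeeping needed is your check that $\mu\subseteq x$ and that $\mu$ is regular uncountable.
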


%--------------------------------------------------------------------------------------------------------------------------------------------------------------------------------------------------------------------------------------------------------
\subsection{Preservation of strong compactness} \label{sec:pres_stcmpct}
%--------------------------------------------------------------------------------------------------------------------------------------------------------------------------------------------------------------------------------------------------------

A key ingredient in the proof of the main theorem is the preservation of strong compactness under forcing. In this section, we address this issue.

Magidor \cite{Ma} proved the consistency of the statement that the least measurable cardinal is strongly compact using an iteration of Prikry forcing. Later, he found another proof, which uses an Easton support iteration of shooting non-reflecting stationary sets. This latter proof is presented in Cummings \cite{C} \S 22. The following proposition follows from a slight generalization of that argument.

\begin{prop} \label{prop:pres_strcmpct}
Assume that GCH holds, $\kappa$ is a supercompact cardinal, and there is no measurable cardinal $> \kappa$. Let $( \langle \mbb{P}_\alpha : \alpha \leq \kappa \rangle , \langle \dot{\mbb{Q}}_\alpha : \alpha < \kappa \rangle )$ be an Easton support forcing iteration with the following properties.
\begin{itemize}
\item[(I)] If $\alpha < \kappa$ is not measurable, then $\mbb{P}_\alpha$ forces that $\dot{\mbb{Q}}_\alpha$ is a trivial poset.
\item[(II)] If $\alpha < \kappa$ is measurable, then $\mbb{P}_\alpha$ forces that $\dot{\mbb{Q}}_\alpha$ is an $\alpha$-strategically closed $\alpha^+$-c.c.~forcing poset of size $\leq \alpha^+$.
\end{itemize}
Then $\mbb{P}_\kappa$ forces that $\kappa$ is strongly compact.
\end{prop}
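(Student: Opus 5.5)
We follow Magidor's argument as presented in Cummings \cite{C} \S 22. Fix a regular $\lambda > \kappa$; it suffices to find, in $V^{\mbb{P}_\kappa}$, a $\kappa$-ultrafilter over $\mcal{P}_\kappa \lambda$. For this we build, in $V[G]$ with $G$ $\mbb{P}_\kappa$-generic, an elementary embedding $j^* : V[G] \to N^*$ definable in $V[G]$ with critical point $\kappa$, together with some $s \in N^*$ satisfying $j^*[\lambda] \subseteq s$ and $N^* \models |s| < j^*(\kappa)$. Then $\{ X \subseteq \mcal{P}_\kappa \lambda : s \cap j^*(\lambda) \in j^*(X) \}$ is the required fine $\kappa$-complete ultrafilter.

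The embedding is a composition. In $V$, let $U$ be a normal measure on $\kappa$ of Mitchell order $0$, so that $\kappa$ is not measurable in $M_0 := \mrm{Ult}(V,U)$, and let $j_0 : V \to M_0$ be the ultrapower map. Let $W$ be a normal fine measure on $\mcal{P}_\kappa \lambda$, and let $i : M_0 \to M := \mrm{Ult}(M_0, j_0(W))$ be the ultrapower, computed in $M_0$. Put $j := i \circ j_0 : V \to M$. Then $i$ has critical point $j_0(\kappa) > \kappa$, and $i[j_0[\lambda]] \subseteq i(j_0[\lambda]) =: s$, where $|s| < j(\kappa)$ in $M$. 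We now lift $j$ to $V[G]$ in three pieces.

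\emph{Below $\kappa$.} $j(\mbb{P}_\kappa)$ factors as $\mbb{P}_\kappa * \dot{\mbb{Q}}_\kappa^{M} * \dot{\mbb{P}}_{\kappa+1, j(\kappa)}$. The key point is that in $M$ the stage $\kappa$ is trivial. Indeed, $\kappa$ is not measurable in $M_0$, and $i$ does not change $V_{j_0(\kappa)}$, so $\kappa$ is not measurable in $M$ either; hence $\dot{\mbb{Q}}^M_\kappa$ is trivial by (I). Moreover, no measurable cardinals of $M$ lie in $(\kappa, \lambda]$ in a way that would obstruct closure. By Fact \ref{fact:Easton_it_basics} (2), together with (II), the tail $\mbb{P}_{\kappa+1, j(\kappa)}$ is $\beta$-strategically closed in $M[G]$, where $\beta$ is the least $M$-measurable cardinal above $\kappa$.

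\emph{Generics for the tails.} We build generics over $M_0[G]$ for $\mbb{P}^{M_0}_{\kappa, j_0(\kappa)}$. This is done by counting: $M_0[G]$ is closed under $\kappa$-sequences in $V[G]$; the poset is $\kappa^+$-strategically closed there, since the next measurable of $M_0$ above $\kappa$ is above $\kappa^+$; and by GCH it has only $\kappa^+$ dense sets (as $|j_0(\kappa)^{+}|^V = \kappa^+$). So we can meet all the dense sets in $V[G]$ and lift $j_0$ to $j_0 : V[G] \to M_0[G*H_0]$. Next we lift $i$. Since $i$ is an ultrapower of $M_0$ and the relevant generics are constructed inside the model, we use the standard fact that $i[G*H_0]$ generates a generic modulo a transfer argument. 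Concretely, $i \restriction M_0[G]$ lifts trivially because $\mathrm{crit}(i) > \kappa$. For $H_0$, the poset $\mbb{P}_{\kappa, j_0(\kappa)}$ of $M_0[G]$ is $j_0(\kappa)$-directed-ish in the strategic sense, and $i$ is continuous at the relevant points, so $i[H_0]$ generates an $M$-generic filter.

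\emph{The main obstacle} is this last step. Even strategic closure does not directly give master conditions, so instead we follow Cummings: we note that $i$ is the ultrapower by a measure $j_0(W) \in M_0$, and we exploit that every element of $M[G]$ has the form $i(f)(a)$ with $f \in M_0$. Then $\{i(D) \cap \ldots\}$ is used to show $i``H_0$ meets each dense set of $M$ (via the distributivity of $\mbb{P}_{\kappa,j_0(\kappa)}$ over $\mcal{P}_\kappa\lambda$-indexed families, which holds because the least measurable of $M_0$ above $\kappa$ exceeds $\lambda^{<\kappa}$; this is where the hypothesis that no measurables lie above $\kappa$ in $V$ enters, giving $j_0(\kappa)$ as the first $M_0$-measurable above $\kappa$). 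Finally, we build generics for the remaining tail $\mbb{P}_{j_0(\kappa), j(\kappa)}$ of $M$ by the same counting argument, now using that $M$ is closed under $\lambda$-sequences from $M_0$ and that the relevant number of dense sets, computed in $V[G]$, is small. This gives the lift $j^* : V[G] \to M[G*H]$, and with $s$ as above the required ultrafilter follows.
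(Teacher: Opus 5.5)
\textbf{The gap is in your final step.} You propose to build a generic for the part of $j(\mbb{P}_\kappa)$ above $j_0(\kappa)$ over $M[G*H_0]$ ``by the same counting argument''. That argument fails for two reasons.
\begin{itemize}
\item Stage $j_0(\kappa)$ is not trivial there. $j_0(\kappa)$ is measurable in $M$, and $\dot{\mbb{Q}}^M_{j_0(\kappa)}$ is only $j_0(\kappa)$-strategically closed.
\item The remaining tail $\mbb{R}$ is very strategically closed in $M[\cdots]$, since the least $M$-measurable above $j_0(\kappa)$ exceeds $j_0(\lambda)$. This is where ``no measurable above $\kappa$'' is really used. But $\mbb{R}$ has $\lambda^+$ dense sets as counted in $V[G]$, while $M\subseteq M_0$ is closed only under $\kappa$-sequences in $V$. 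So a descending sequence built in $V[G]$ stops lying in $M[G*H_0]$ at length $\kappa^+$, and the strategy can no longer be applied.
\end{itemize}
The $j_0(\lambda)$-closure of $M$ inside $M_0$ does not rescue this. $H_0$ is not in $M_0[G]$, and inside $M_0[G*H_0]$ you would have to meet $j_0(\kappa)^+$ antichains of a poset that is only $j_0(\kappa)$-strategically closed. Your appeal to representing elements as $i(f)(a)$ is applied to the wrong embedding. $i$ has large width, and the poset it would need to act on is not closed enough.

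\textbf{The missing idea is Magidor's term-forcing transfer, which is the paper's Claim 2.} Your embedding is in fact the paper's $l=k\circ j$, with $U_1=U$ and $U_0=W$. Indeed $M=j_0(M_W)$ for $M_W=\mrm{Ult}(V,W)$. The restriction of $j_0$ to $M_W$ is the ultrapower of $M_W$ by $U$, which has width $\le\kappa$ and lifts to $k^*:M_W[G]\to M[G*H_0]$. The transfer then goes as follows.
\begin{itemize}
\item Since $M_W[G]$ is $\lambda$-closed in $V[G]$, build in $V[G]$ a generic $I$ over $M_W[G]$ for the $\lambda^+$-strategically closed term poset $\mbb{A}(\mbb{Q}^{M_W}_\kappa,\dot{\mbb{R}}^{M_W})$.
\item By Lemma~\ref{lem:width_generic}, $k^*[I]$ generates a generic for $\mbb{A}(\mbb{Q}^{M}_{j_0(\kappa)},\dot{\mbb{R}})$ over $M[G*H_0]$.
\item Obtain a generic for $\mbb{Q}^M_{j_0(\kappa)}$ alone by counting: it has $\kappa^+$ antichains from $V$'s viewpoint, is $\kappa^+$-strategically closed, and $M[G*H_0]$ is $\kappa$-closed in $V[G]$.
\item Lemma~\ref{lem:term_basic} combines the two into an $\mbb{R}$-generic.
\end{itemize}

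\textbf{Smaller errors to fix.}
\begin{itemize}
\item $j_0[\lambda]\notin M_0$, because $j_0[\kappa^+]$ would witness that $j_0(\kappa^+)$ is singular in $M_0$. So $i(j_0[\lambda])$ is undefined; take $s:=i[j_0(\lambda)]\in M$, which has $M$-size $j_0(\lambda)<j(\kappa)$.
\item The least $M_0$-measurable above $\kappa$ lies below $j_0(\kappa)<\kappa^{++}$, because the measurables are unbounded below $\kappa$. So your claims about measurables of $M$ in $(\kappa,\lambda]$ and about distributivity over $\mcal{P}_\kappa\lambda$-indexed families are false.
\item That distributivity is also unnecessary. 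Since $\mrm{crit}(i)=j_0(\kappa)$ and Easton supports are bounded, $i$ fixes every condition in $G*H_0$, and $G*H_0$ is already generic over $M$ for the same poset.
\end{itemize}
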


This proposition can be proved by the argument in \cite{C} \S 22, although it is not explicitly stated there. We therefore include the proof for the reader's convenience and for completeness. We assume some familiarity with the basic arguments for lifting elementary embeddings in generic extensions, as presented in \cite{C} \S 9 and \S11.

Before proving the proposition, we recall some preliminary material.

First we briefly review the idea of term forcings, due to Laver. Let $\mbb{P}$ be a forcing poset and let $\dot{\mbb{Q}}$ be a $\mbb{P}$-name for a forcing poset. The \emph{term poset} $\mbb{A} ( \mbb{P} , \dot{\mbb{Q}} )$ is the poset consisting of all least-rank $\mbb{P}$-names $\dot{q}$ such that $\Vdash_\mbb{P} ``\, \dot{q} \in \dot{\mbb{Q}}\,"$. Here a $\mbb{P}$-name $\dot{a}$ is said to be \emph{least-rank} if there is no $\mbb{P}$-name $\dot{b}$ of lower rank such that $\Vdash_\mbb{P} ``\, \dot{a} = \dot{b} \, "$. (In \cite{C}, least-rank names are called canonical names.) For $\dot{q}_0,\dot{q}_1\in\mbb{A} ( \mbb{P} , \dot{\mbb{Q}} )$, we define $\dot{q}_0 \leq \dot{q}_1$ if $\Vdash_\mbb{P} ``\,\dot{q}_0 \leq \dot{q}_1 \, "$.

We shall use the following two lemmas, which correspond to Propositions 22.3 and 22.4 of \cite{C}. For the reader's convenience, we include the proofs.

\begin{lemma}\label{lem:term_basic}
Let $M$ be an inner model. In $M$ let $\mbb{P}$ be a forcing poset and $\dot{\mbb{Q}}$ be a $\mbb{P}$-name for a forcing poset. Suppose that $G$ is a $\mbb{P}$-generic filter over $M$, and $I$ is an $\mbb{A} (\mbb{P},\dot{\mbb{Q}})^M$-generic filter over $M$. Then $H := \{i_G (\dot{q}) : \dot{q} \in I \}$ is an $i_G (\dot{\mbb{Q}})$-generic filter over $M[G]$.
\end{lemma}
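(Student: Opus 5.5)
We will verify directly that $H$ is a filter on $i_G(\dot{\mbb{Q}})$ meeting every dense subset of $i_G(\dot{\mbb{Q}})$ lying in $M[G]$. Work with $\mbb{A} := \mbb{A}(\mbb{P},\dot{\mbb{Q}})^M$.

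\emph{Filter properties.} First, $H \neq \emptyset$: the least-rank name for $1_{\dot{\mbb{Q}}}$ lies in $\mbb{A}$ and is the largest element of $\mbb{A}$, so it belongs to $I$. For compatibility, let $\dot{q}_0, \dot{q}_1 \in I$. Since $I$ is a filter, there is $\dot{r} \in I$ with $\dot{r} \leq \dot{q}_0, \dot{q}_1$ in $\mbb{A}$. Then $\Vdash_\mbb{P} ``\,\dot{r} \leq \dot{q}_i\,"$, so $i_G(\dot{r}) \leq i_G(\dot{q}_i)$, giving a common extension in $H$.

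For upward closure, let $q \in i_G(\dot{\mbb{Q}})$ with $q \geq i_G(\dot{q})$ for some $\dot{q} \in I$. Choose a $\mbb{P}$-name $\dot{q}'$ with $i_G(\dot{q}') = q$. By the forcing theorem there is $p \in G$ forcing $\dot{q}' \in \dot{\mbb{Q}}$ and $\dot{q}' \geq \dot{q}$. Using mixing in $M$, build a name $\dot{s}$ interpreted as $\dot{q}'$ below $p$ and as $\dot{q}$ on conditions incompatible with $p$. Replace $\dot{s}$ by a least-rank equivalent. Then $\dot{s} \in \mbb{A}$, $\Vdash \dot{s} \geq \dot{q}$, so $\dot{s} \geq \dot{q}$ in $\mbb{A}$. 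Hence $\dot{s} \in I$, and $i_G(\dot{s}) = q$.

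\emph{Genericity (the main step).} Let $D \in M[G]$ be dense in $i_G(\dot{\mbb{Q}})$, with name $\dot{D}$. The key point is that $I$ is generic over $M$ only, not over $M[G]$, so we must produce a dense set of $\mbb{A}$ in $M$ that does not depend on $G$. Fix $p_0 \in G$ forcing that $\dot{D}$ is dense in $\dot{\mbb{Q}}$. In $M$, let
\[
D^* := \{\dot{r} \in \mbb{A} : p_0 \Vdash \dot{r} \in \dot{D}\}.
\]
We show $D^*$ is dense in $\mbb{A}$. Given $\dot{q} \in \mbb{A}$, mixing over a maximal antichain below $p_0$ (together with the part incompatible with $p_0$) gives a name $\dot{r}$ such that $p_0$ forces $\dot{r} \leq \dot{q}$ and $\dot{r} \in \dot{D}$, while conditions incompatible with $p_0$ force $\dot{r} = \dot{q}$. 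This uses $\Vdash \dot{q} \in \dot{\mbb{Q}}$ and the density of $\dot{D}$ below $p_0$. Then $\Vdash \dot{r} \leq \dot{q}$, and after taking a least-rank version, $\dot{r} \in D^*$ lies below $\dot{q}$.

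By the genericity of $I$ over $M$, some $\dot{r} \in I \cap D^*$. Since $p_0 \in G$, we get $i_G(\dot{r}) \in D \cap H$. Hence $H$ is $i_G(\dot{\mbb{Q}})$-generic over $M[G]$.

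The only delicate points are the two mixing arguments, which use the fullness lemma in $M$, and the passage to least-rank names. Least-rank names are used so that $\mbb{A}$ is a set; this does not affect the argument, since equivalent names have the same interpretation.
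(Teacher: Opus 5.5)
Your proof is correct and follows essentially the same route as the paper. You check directedness and upward closure; for the latter, the paper simply asserts the existence of the name $\dot q_1$ that you construct explicitly by mixing. For genericity, you pull a dense $D \in M[G]$ back to the set $D^* \subseteq \mbb{A}$ of names forced into $\dot D$, which lies in $M$ and is dense there. The only cosmetic difference is that the paper assumes $\dot D$ is forced dense by $1_{\mbb{P}}$, whereas you work below a condition $p_0 \in G$ forcing this, which amounts to the same thing.
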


\begin{proof}
Let $\mbb{A} := \mbb{A} ( \mbb{P} , \dot{\mbb{Q}} )^M$ and $\mbb{Q} := i_G ( \dot{\mbb{Q}} )$.

First we show that $H$ is directed downward. Let $q_0 , q_1 \in H$. For $i = 0,1$, take $\dot{q}_i \in I$ with $q_i = i_G ( \dot{q}_i )$. Since $I$ is a filter, we can take $\dot{q} \in I$ with $\dot{q} \leq \dot{q}_0 , \dot{q}_1$ in $\mbb{A}$. Then $q := i_G ( \dot{q} ) \in H$, and $q \leq q_0 , q_1$ in $\mbb{Q}$.

Next we show that $H$ is upwards closed. Suppose $q_0 \in H$ and $q_0 \leq q_1$ in $\mbb{Q}$. Take $\dot{q}_0 \in I$ with $q_0 = i_G ( \dot{q}_0 )$. Then we can take $\dot{q}_1 \in \mbb{A}$ with $\dot{q}_0 \leq \dot{q}_1$ in $\mbb{A}$ and $q_1 = i_G ( \dot{q}_1 )$. Then $\dot{q}_1 \in I$ since $I$ is a filter, and so $q_1 \in H$.

Finally we prove the genericity of $H$. Suppose $D$ is a dense subset of $\mbb{Q}$ in $M[G]$. Let $\dot{D}$ be a $\mbb{P}$-name with $D = i_G ( \dot{D} )$. We may assume that $\mbb{P}$ forces $\dot{D}$ to be a dense subset of $\dot{\mbb{Q}}$. Then it is easy to check that the set
\[
D^* := \{ \dot{q} \in \mbb{A} : M \models \; \Vdash_{\mbb{P}} ``\, \dot{q} \in \dot{D} \," \} \in M
\]
is dense in $\mbb{A}$. Take $\dot{q} \in I \cap D^*$ by the genericity of $I$. Then $i_G ( \dot{q} ) \in H \cap D$.
\end{proof}

\begin{lemma} \label{lem:term_basic2}
Let $\kappa$ be a regular uncountable cardinal. Suppose $\mbb{P}$ is a forcing poset, and $\dot{\mbb{Q}}$ is a $\mbb{P}$-name for a $\kappa$-strategically closed forcing poset. Then $\mbb{A} ( \mbb{P} , \dot{\mbb{Q}} )$ is $\kappa$-strategically closed.
\end{lemma}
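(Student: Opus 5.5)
The idea is to transfer a winning strategy for Even in $\Game_\kappa ( \dot{\mbb{Q}} )$, played inside $V^{\mbb{P}}$, into a winning strategy for Even in $\Game_\kappa ( \mbb{A} )$, where $\mbb{A} := \mbb{A} ( \mbb{P} , \dot{\mbb{Q}} )$. First, by the maximum principle (fullness), fix a $\mbb{P}$-name $\dot{\sigma}$ such that $\Vdash_\mbb{P}$ ``$\dot{\sigma}$ is a winning strategy for Even in $\Game_\kappa ( \dot{\mbb{Q}} )$''. We may also assume $\dot{\sigma}$ is forced to satisfy two conditions: it plays $1_{\dot{\mbb{Q}}}$ at stage $0$, and at even stages $\alpha$ it returns a condition below every earlier move whenever the partial play has a lower bound.

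Even then plays in $\Game_\kappa ( \mbb{A} )$ as follows. Suppose $\langle \dot{q}_\beta : \beta < \alpha \rangle$ has been played, with $\alpha$ even. Even uses the maximum principle to choose a name $\dot{q}$ such that
\[
\Vdash_\mbb{P} ``\, \dot{q} = \dot{\sigma} ( \langle \dot{q}_\beta : \beta < \alpha \rangle ) \mbox{ if this is a legal play according to } \dot{\sigma}, \mbox{ and } \dot{q} = 1_{\dot{\mbb{Q}}} \mbox{ otherwise} \,".
\]
He then replaces $\dot{q}$ by a least-rank name $\dot{q}_\alpha$ forced equal to it, so that $\dot{q}_\alpha \in \mbb{A}$. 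At stage $0$ this gives a name for $1_{\dot{\mbb{Q}}}$. Strictly, Even must play $1_{\mbb{A}}$ there, and this is the same element up to equivalence in the preorder.

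The main verification is an induction on $\alpha < \kappa$. The claim is that if Even has followed this strategy, then $\Vdash_\mbb{P}$ ``$\langle \dot{q}_\beta : \beta < \alpha \rangle$ is a play in $\Game_\kappa ( \dot{\mbb{Q}} )$ according to $\dot{\sigma}$''. At odd stages Odd's move $\dot{q}_{\beta}$ satisfies $\Vdash \dot{q}_\beta \leq \dot{q}_{\beta - 1}$, so the play stays legal in every extension. At even stages the definition of the strategy makes the forced play follow $\dot{\sigma}$. Once this claim holds, $\dot{\sigma}$ being winning gives that $\mbb{P}$ forces the partial play to have a lower bound. By the assumption on $\dot{\sigma}$, the name $\dot{q}_\alpha$ it produces is then forced to lie below all $\dot{q}_\beta$. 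Hence $\dot{q}_\alpha \leq \dot{q}_\beta$ in $\mbb{A}$, since $\leq$ in $\mbb{A}$ is exactly forced $\leq$. So Even can always move, the play lasts $\kappa$ stages, and Even wins.

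The main obstacle is bookkeeping rather than substance. One issue is that a sequence of names $\langle \dot{q}_\beta : \beta < \alpha \rangle \in V$ must be turned into a $\mbb{P}$-name for the sequence of interpretations; the canonical name $\{ ( \mathrm{op}(\check\beta , \dot{q}_\beta) , 1 ) \}$ handles this. The other is to make sure the ``otherwise'' clause never actually triggers, and the induction above is exactly what guarantees that. Note that no chain condition on $\mbb{P}$ is needed: lower bounds exist in $\mbb{A}$ by mixing names, not by any closure of $\mbb{P}$.
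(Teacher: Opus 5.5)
Your proposal is correct and follows essentially the paper's argument. You fix a $\mbb{P}$-name $\dot{\sigma}$ for Even's winning strategy in $\Game_\kappa ( \dot{\mbb{Q}} )$, and Even answers a partial play $\langle \dot{q}_\beta : \beta < \alpha \rangle$ in $\Game_\kappa ( \mbb{A} )$ with a least-rank name forced equal to $\dot{\sigma} ( \langle \dot{q}_\beta : \beta < \alpha \rangle )$; your induction showing the play is forced to follow $\dot{\sigma}$ (so the ``otherwise'' clause never fires and lower bounds in $\mbb{A}$ come from forced lower bounds) is exactly the verification the paper leaves as ``easy to check''.
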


\begin{proof}
Let $\mbb{A} := \mbb{A} ( \mbb{P} , \dot{\mbb{Q}} )$. Let $\dot{\tau}$ be a $\mbb{P}$-name for Even's winning strategy in $\Game_\kappa ( \dot{\mbb{Q}} )$. Let $\tau^*$ be Even's strategy in $\Game_\kappa ( \mbb{A} )$ such that if $\langle \dot{q}_\beta : \beta < \alpha \rangle$ has been chosen for an even $\alpha < \kappa$, then choose $\dot{q}_\alpha \in \mbb{A}$ with $\Vdash_\mbb{P} ``\,\dot{q}_\alpha = \dot{\tau} ( \langle \dot{q}_\beta : \beta < \alpha \rangle ) \,"$. Then it is easy to check that $\tau^*$ is a winning strategy of Even in $\Game_\kappa ( \mbb{A} )$.
\end{proof}

We next introduce our notation and recall some facts concerning large-cardinal embeddings. We will use the following well-known fact due to Mitchell. The proof can be found in Jech \cite{Je} Chapter 19. An ultrafilter $U$ satisfying the conclusion of next fact is called an ultrafilter of \emph{Mitchell order} $0$.

\begin{fact} \label{fact:Mitchell_order_0}
Suppose $\kappa$ is a measurable cardinal. Then there is a normal $\kappa$-ultrafilter $U$ over $\kappa$ such that $\{ \alpha < \kappa : \mbox{$\alpha$ is not measurable} \} \in U$.
\end{fact}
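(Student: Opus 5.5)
The plan is to take a minimal ultrafilter in the Mitchell order: among all normal $\kappa$-ultrafilters over $\kappa$, choose one whose ultrapower does not think $\kappa$ is measurable. The Mitchell order $\lhd$ is defined by $U' \lhd U$ iff $U' \in \mrm{Ult}(V,U)$. It is well-founded (this is the standard fact proved in Jech \cite{Je} Chapter 19), so we may take a $\lhd$-minimal normal $\kappa$-ultrafilter $U$ over $\kappa$.

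Let $j : V \to M = \mrm{Ult}(V,U)$ be the ultrapower map. We claim that $\kappa$ is not measurable in $M$.

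Suppose otherwise, and let $W \in M$ be a $\kappa$-ultrafilter over $\kappa$ in the sense of $M$. We may take $W$ normal in $M$. Since ${}^\kappa M \subseteq M$, we have $\mcal{P}(\kappa) \subseteq M$, and $\mcal{P}(\kappa)$ is the same in $V$ and $M$.

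\begin{itemize}
\item $W$ is an ultrafilter over $\kappa$ in $V$, because $\mcal{P}(\kappa)^M = \mcal{P}(\kappa)$.
\item $W$ is $\kappa$-complete in $V$: any sequence of fewer than $\kappa$ members of $W$ lies in $M$ by the closure ${}^\kappa M \subseteq M$, so its intersection is in $W$ by completeness in $M$.
\item $W$ is normal in $V$ by the same reasoning, since every regressive function $f:\kappa\to\kappa$ belongs to $M$.
\end{itemize}

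Thus $W$ is a normal $\kappa$-ultrafilter over $\kappa$ in $V$ with $W \in M$, i.e.\ $W \lhd U$. This contradicts the minimality of $U$.

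Hence $M \models$ ``$\kappa$ is not measurable''. By Łoś's theorem (normality gives $\kappa = [\mrm{id}]_U$), this means $\{ \alpha < \kappa : \alpha \text{ is not measurable} \} \in U$, as required.

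The main obstacle is the well-foundedness of $\lhd$. If it is not simply cited, I would prove it as follows.

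\begin{itemize}
\item Suppose $U_1 \rhd U_2 \rhd \cdots$ is a descending sequence. We may assume all the $U_n$ concentrate on a fixed $\kappa$, since $\lhd$ only relates ultrafilters on the same cardinal.
\item Using the representation $U_{n+1} = [f_{n+1}]_{U_n}$ in $\mrm{Ult}(V,U_n)$, choose the functions $f_n$ with values normal ultrafilters on $\alpha < \kappa$.
\item Derive a contradiction via a rank function on ultrafilters, given by $o(U) = \sup\{ o(U')+1 : U' \lhd U \}$, defined by induction, together with the observation that if $U' \lhd U$, then the $j_U$-image of the Mitchell order up to $\kappa$ bounds $U'$.
\end{itemize}

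Rather than redo this, I would simply cite Jech Chapter 19, as the paper does.
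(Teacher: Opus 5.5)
Your argument is correct and is the standard proof behind the paper's citation of Jech, Chapter 19; the paper gives no proof of its own. For a Mitchell-minimal normal measure $U$, any normal measure on $\kappa$ in $\mrm{Ult}(V,U)$ is, by the $\kappa$-closure of the ultrapower, a normal measure in $V$ lying Mitchell-below $U$, so $\kappa$ is not measurable in $\mrm{Ult}(V,U)$ and \L o\'s's theorem finishes.

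The only defect is your optional sketch of well-foundedness, which is circular: the rank $o(U)$ can only be defined by recursion once the order is already known to be well-founded. A correct one-line argument is that $U' \in \mrm{Ult}(V,U)$ implies $j_{U'}(\kappa) < j_U(\kappa)$. Indeed, $\mrm{Ult}(V,U)$ contains ${}^\kappa\kappa$ and $U'$, so it computes $j_{U'}(\kappa)$ correctly, as an ordinal of cardinality at most $2^\kappa$ there, and this lies below the inaccessible $j_U(\kappa)$.
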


Let $\kappa$ and $\lambda$ be cardinals with $\kappa \leq \lambda$. Then a $( \kappa , \lambda )$-\emph{strongly compact embedding} is an elementary embedding $j$ from $V$ to some inner model $M$ such that
\begin{itemize}
\item[(i)] $\mrm{crit} (j) = \kappa$, and $j( \kappa ) > \lambda$,
\item[(ii)] there is $x \in \mcal{P}_{j( \kappa )} j( \lambda )^M$ with $j[ \lambda ] \subseteq x$.
\end{itemize}
Recall that $\kappa$ is $\lambda$-strongly compact if and only if there is a $( \kappa , \lambda )$-strongly compact embedding.

Suppose that $M$ and $N$ are inner models, $j : M \to N$ is an elementary embedding and $\kappa$ is an ordinal. We say that $j$ has \emph{width} $\leq \kappa$ if for any $x \in N$ there is a function $f \in M$ with $\dom (f) = \kappa$ and an ordinal $\alpha < j( \kappa )$ such that $x = j(f)( \alpha )$. Note that if $\kappa$ is a measurable cardinal in $M$, and $j : M \to N$ is the ultrapower map by some $\kappa$-ultrafilter over $\kappa$, then $j$ has width $\leq \kappa$. The following lemma corresponds to Proposition 15.1 of \cite{C}. We give the proof for the convenience of the readers.

\begin{lemma} \label{lem:width_generic}
Let $\kappa$ be an ordinal. Suppose $M$ and $N$ are inner models, and $j : M \to N$ is an elementary embedding of width $\leq \kappa$. Let $\mbb{P}$ be a forcing poset in $M$ such that $\mbb{P}$ is $\kappa^+$-strategically closed in $M$, and let $G$ be a $\mbb{P}$-generic filter over $M$. Then $j[G]$ generates a $j( \mbb{P} )$-generic filter over $N$.
\end{lemma}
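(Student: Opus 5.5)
The plan is the standard one: use the strategic closure of $\mbb{P}$ to build, in $V$, a descending sequence that meets every dense set of $j(\mbb{P})$ lying in $N$; then check that $j[G]$ meets each such dense set. The width assumption is what keeps the number of dense sets to handle down to $\kappa$.

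\textbf{Step 1 (reducing dense sets).} Let $D \in N$ be a dense open subset of $j(\mbb{P})$. By the width assumption, $D = j(f)(\alpha)$ for some $f \in M$ with $\dom(f) = \kappa$ and some $\alpha < j(\kappa)$. We may assume that $f(\xi)$ is a dense open subset of $\mbb{P}$ for every $\xi < \kappa$. To arrange this, replace $f(\xi)$ by $\mbb{P}$ whenever it is not such a set; elementarity shows that $j(f)(\alpha)$ is unchanged.

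\textbf{Step 2 (meeting all $f(\xi)$ at once).} Work in $M$. Since $\mbb{P}$ is $\kappa^+$-strategically closed, $\bigcap_{\xi<\kappa} f(\xi)$ is dense. Given $p \in \mbb{P}$, play $\Game_{\kappa^+}(\mbb{P})$ with Even following its winning strategy, starting by $p$. At the odd stages $2\xi+1$, for $\xi<\kappa$, Odd extends the current condition into $f(\xi)$. Because the play lasts more than $\kappa$ moves and Even wins, there is a lower bound of the first $\kappa$ moves, and that bound lies in every $f(\xi)$ by openness. One technical point: Even's first move must be $1_\mbb{P}$, so let Odd play $p$ at stage $1$ and shift the indexing. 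This gives $E_f := \bigcap_{\xi<\kappa} f(\xi) \in M$, which is dense in $\mbb{P}$.

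\textbf{Step 3 (genericity).} By genericity of $G$ over $M$, pick $q \in G \cap E_f$. By elementarity, $j(q) \in j(E_f) = \bigcap_{\eta < j(\kappa)} j(f)(\eta)$, so $j(q) \in j(f)(\alpha) = D$. Thus $j[G]$ meets every dense open $D \in N$.

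\textbf{Step 4 (filter property).} Elements of $j[G]$ are pairwise compatible in $j(\mbb{P})$: for $p,q \in G$ take $r \in G$ below both, and then $j(r)$ is below $j(p)$ and $j(q)$. So the upward closure $H$ of $j[G]$ is a filter on $j(\mbb{P})$. It meets every dense set of $N$ by Step 3, applied to the dense open set of conditions extending some element of the given dense set.

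The only real point is Step 2: one must use $\kappa^+$-strategic closure, not merely $\kappa$-closure, so that the intersection of $\kappa$ many dense open sets is still dense. This is why the hypothesis is $\kappa^+$.
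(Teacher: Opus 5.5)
Your proof is correct and is essentially the paper's argument. You write $D = j(f)(\alpha)$ and use $\kappa^+$-strategic closure in $M$ to make $\bigcap_{\xi<\kappa} f(\xi)$ dense. You then pick $q \in G$ in it and get $j(q) \in j(f)(\alpha) = D$ by elementarity; your Steps 2 and 4 just fill in routine details the paper leaves implicit. The one inaccuracy is your opening summary. No descending sequence is built in $V$ to meet every dense set of $N$; the game is played inside $M$ only to show the intersection is dense.
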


\begin{proof}
Suppose $D$ is a dense open subset of $j( \mbb{P} )$ in $N$. It suffices to show that $j[G] \cap D \neq \emptyset$. Take a function $f \in M$ on $\kappa$ and $\alpha < j( \kappa )$ such that $D = j(f)( \alpha )$. We may assume $f( \beta )$ is a dense open subset of $\mbb{P}$ for all $\beta < \kappa$. Since $\mbb{P}$ is $\kappa^+$-strategically closed in $M$, $D^* := \bigcap_{\beta < \kappa} f( \beta )$ is dense in $\mbb{P}$. So we can take $p \in G \cap D^*$. Then $j(p) \in j[G] \cap j( D^* )$. But $j( D^* ) = \bigcap_{\beta < j( \kappa )} j(f)( \beta ) \subseteq D$ by the elementarity of $j$. So $j(p) \in j[G] \cap D$.
\end{proof}

Let $M$ and $N$ be inner models, $j : M \to N$ be an elementary embedding and $\mbb{P}$ be a forcing poset in $M$. Suppose that $G$ is a $\mbb{P}$-generic filter over $M$, and that $H$ is a $j( \mbb{P} )$-generic filter over $N$ with $j[G] \subseteq H$. It is standard that $j^* : M[G] \to N[H]$ defined by $j^* ( i_G ( \dot{x} ) ) := i_H ( j( \dot{x} ) )$ for each $\dot{x} \in M^\mbb{P}$ is an elementary embedding. (See \cite{C} Proposition 9.1.) We refer to $j^*$ as a natural extension of $j$ by $G$ and $H$. 

Note that if $j$ has width $\leq \kappa$ for an ordinal $\kappa$, then $j^*$ also has width $\leq \kappa$: Take an arbitrary $x \in N[H]$. Let $\dot{x}$ be a $j( \mbb{P} )$-name for $x$. Then there is $f : \kappa \to M$ in $M$ and $\alpha < j( \kappa )$ such that $j(f) ( \alpha ) = \dot{x}$ since $j$ has width $\leq \kappa$. We may assume that $f( \beta )$ is a $\mbb{P}$-name for all $\beta < \kappa$ in $M$. In $M[G]$ define a function $f^*$ on $\kappa$ by $f^* ( \beta ) = i_G ( f( \beta ) )$. Then $j^* ( f^* ) ( \alpha ) = x$.

Now we are ready to prove Proposition \ref{prop:pres_strcmpct}. Throughout the proof, we shall use Fact \ref{fact:Easton_it_basics} repeatedly without explicit reference.

\begin{proof}[Proof of Proposition \ref{prop:pres_strcmpct}]
Suppose that $G_\kappa$ is a $\mbb{P}_\kappa$-generic filter over $V$. Take an arbitrary regular cardinal $\lambda > \kappa$. We prove that $\kappa$ is $\lambda$-strongly compact in $V[ G_\kappa ]$.

We first make some preliminary observations in $V$. In $V$, let $U_0$ be a normal $\kappa$-ultrafilter over $\mcal{P}_\kappa \lambda$, and let $j : V \to M = \mrm{Ult} ( V , U_0 )$ be the ultrapower map. By the assumptions of the proposition and standard calculations, we have the following:
\begin{itemize}
\item[(i)] ${}^\lambda M \subseteq M$ in $V$.
\item[(ii)] $\lambda < ( \lambda^+ )^V = ( \lambda^+ )^M < j( \kappa ) < ( j( \kappa )^+ )^M \leq j( \lambda ) < ( \lambda^{++} )^V$.
\item[(iii)] In $M$ the least measurable cardinal $> \kappa$ is greater than $\lambda$.
\end{itemize}
Note that $\kappa$ is measurable in $M$. In $M$, let $U_1$ be a normal $\kappa$-ultrafilter over $\kappa$ of Mitchell order $0$, and let $k : M \to N = \mrm{Ult} ( M , U_1 )$ be the ultrapower map. Then we have the following:
\begin{itemize}
\item[(iv)] ${}^\kappa N \subseteq N$ in $M$.
\item[(v)] $( \kappa^+ )^M = ( \kappa^+ )^N < k( \kappa ) < ( k( \kappa )^+ )^N < ( \kappa^{++} )^M$.
\item[(vi)] $\kappa$ is not measurable in $N$.
\item[(vii)] $k$ has width $\leq \kappa$.
\end{itemize}
Let $l := k \circ j : V \to N$. Notice that  $\mrm{crit} (l) = \kappa$, and $l( \kappa ) \geq j( \kappa ) > \lambda$. Let $x := k( j[ \lambda ] ) \in N$. Then $l[ \lambda ] \subseteq x \subseteq l( \lambda )$. Moreover, since $| j[ \lambda ] |^M = \lambda$, we have $|x|^N = k( \lambda ) < k( j( \kappa ) ) = l( \kappa )$. Hence, 
\begin{itemize}
\item[(viii)] $l$ is a $( \kappa , \lambda )$-strongly compact embedding.
\end{itemize}

Our goal is to construct, in $V[ G_\kappa ]$, an $l( \mbb{P}_\kappa )$-generic filter $G_{l( \kappa )}$ over $N$,  and to lift $l$ to an elementary embedding from $V[ G_\kappa ]$ to $N[ G_{l( \kappa )} ]$.

Let $( \langle \mbb{P}_\alpha^M : \alpha \leq j( \kappa ) \rangle , \langle \dot{\mbb{Q}}_\alpha^M : \alpha < j( \kappa ) \rangle )$ and $( \langle \mbb{P}_\alpha^N : \alpha \leq l( \kappa ) \rangle , \langle \dot{\mbb{Q}}_\alpha^N : \alpha < l( \kappa ) \rangle )$ be the images of $( \langle \mbb{P}_\alpha : \alpha \leq \kappa \rangle , \langle \dot{\mbb{Q}}_\alpha : \alpha < \kappa \rangle )$ by $j$ and $l$, respectively. Let $\dot{\mbb{P}}_{\alpha , \beta}^M$ and $\dot{\mbb{P}}_{\alpha , \beta}^N$ denote the corresponding tails of $\mbb{P}_\beta^M$ and $\mbb{P}_\beta^N$ after $\alpha$. Then $\mbb{P}_{j( \kappa )}^M$ and $\mbb{P}_{l( \kappa )}^N$ factor in $M$ and $N$, respectively, as follows:
\begin{itemize}
\item[(ix)] $\mbb{P}_{j( \kappa )}^M \sim \mbb{P}_\kappa^M * \dot{\mbb{Q}}_\kappa^M * \dot{\mbb{P}}_{\kappa + 1 , j( \kappa )}^M$.
\item[(x)] $\mbb{P}_{l( \kappa )}^N \sim \mbb{P}_\kappa^N * \dot{\mbb{P}}_{\kappa , k( \kappa )}^N * \dot{\mbb{Q}}_{k( \kappa )}^N * \dot{\mbb{P}}_{k( \kappa ) + 1 , l( \kappa )}^N$.
\end{itemize}

From now on, we work in $V[ G_\kappa ]$ unless otherwise stated. We will construct a $\mbb{P}_{l( \kappa )}^N$-generic filter over $N$ along its factorization (x). First note that $\mbb{P}_\kappa^M = \mbb{P}_\kappa^N = \mbb{P}_\kappa$, and that $G_\kappa$ is $\mbb{P}_\kappa$-generic over both $M$ and $N$. We will use $G_\kappa$ for a generic filter for the first factor $\mbb{P}_\kappa^N$. Also note that GCH holds in $N[ G_\kappa ]$, $M[ G_\kappa ]$ and $V[ G_\kappa ]$.

We now construct generic filters for the second, third and fourth factors of the decomposition in (x). Let $\mbb{P}$ be the interpretation of $\dot{\mbb{P}}_{\kappa , k( \kappa ) +1}^N$ by $G_\kappa$. We claim the following.

\begin{claim}
In $M[ G_\kappa ]$, there is a $\mbb{P}$-generic filter $G_{\kappa , k( \kappa ) + 1}$ over $N[ G_\kappa ]$.
\end{claim}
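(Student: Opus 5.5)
The plan is the standard counting argument for building a generic filter over $N[G_\kappa]$ inside $M[G_\kappa]$. We need two ingredients there: enough closure of $\mbb{P}$, and a small number of dense sets to meet.

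\emph{Closure.} In $N$, $\kappa$ is not measurable by (vi). Also, by (iii) and elementarity of $k$, no ordinal in the interval $(\kappa, k(\kappa))$ that is measurable in $N$ lies below $\lambda$; the least such is in fact above $\kappa^+$. Hence the first nontrivial stage of $\dot{\mbb{P}}^N_{\kappa,k(\kappa)+1}$ is at some $\beta\ge$ the least $N$-measurable above $\kappa$, which is $>(\kappa^+)^N=(\kappa^+)^M$. By Fact \ref{fact:Easton_it_basics}(2) applied in $N$, the poset $\mbb{P}$ is $\beta$-strategically closed in $N[G_\kappa]$, so in particular it is $\kappa^{++}$-strategically closed there, using $\beta\ge\kappa^{++}$ if $\beta$ is inaccessible.

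Next we transfer this closure to $M[G_\kappa]$. Since ${}^\kappa N\subseteq N$ in $M$ by (iv), and $\mbb{P}_\kappa$ is $\kappa$-c.c. (being of size $\kappa$ on a dense set), we get ${}^\kappa N[G_\kappa]\subseteq N[G_\kappa]$ in $M[G_\kappa]$ by the usual name argument. So Even's strategy from $N[G_\kappa]$ still wins games of length $\kappa^+$ in $M[G_\kappa]$: at limit stages $<\kappa^+$ the partial play has length $\le\kappa$, hence lies in $N[G_\kappa]$, and the strategy applies. Thus $\mbb{P}$ is $\kappa^+$-strategically closed in $M[G_\kappa]$.

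\emph{Counting dense sets.} By Fact \ref{fact:Easton_it_basics}(1) applied in $N$ to the iteration up to $k(\kappa)+1$, the poset $\mbb{P}$ has a dense subset of size $\le k(\kappa)^+$ in $N[G_\kappa]$, computed via GCH in $N$. Under GCH, the number of dense subsets of $\mbb{P}$ lying in $N[G_\kappa]$ is at most $2^{k(\kappa)^+}=k(\kappa)^{++}$ in $N$. By (v), $|k(\kappa)^{++}|^M$ is at most $(2^\kappa)^M$ computed through $k(\kappa)<(\kappa^{++})^M$ and $|k(\kappa)^+|^N\le(\kappa^{+})^M$. More precisely, $N$ is an ultrapower of $M$ by a measure on $\kappa$, so $|k(\mu)|^M\le(\mu^\kappa)^M$. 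Hence $|{(k(\kappa)^{++})}^N|^M\le((\kappa^{++})^\kappa)^M=\kappa^{++}$.

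So in $M[G_\kappa]$ we have at most $\kappa^{++}$ dense sets to meet but only $\kappa^+$-strategic closure. This mismatch is the main obstacle, and there are two ways I would try to handle it.

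The first option is a finer count. If dense sets can be captured by functions $f\in M$ on $\kappa$, as in Lemma \ref{lem:width_generic}, via width $\le\kappa$, then we can group them into $\kappa^+$ families, each handled by intersecting $\kappa$ many dense sets. Concretely, each dense open $D\in N[G_\kappa]$ has the form $k(f)(\alpha)$ with $\alpha<k(\kappa)$ and $f\in M$, where $f$ is essentially a $\kappa$-sequence of names for subsets of a $\kappa^+$-sized poset. There are only $(\kappa^{+})^{\kappa}=\kappa^+$ such $f$ in $M$ under GCH. For each $f$, the set $\bigcap_{\alpha<k(\kappa)}k(f)(\alpha)$ is dense by $k(\kappa)^+$-strategic closure in $N[G_\kappa]$, which gives the grouping.

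So the plan is: enumerate in $M[G_\kappa]$ the $\kappa^+$ many functions $f$ (or their $\mbb{P}_\kappa$-names) as $\langle f_\xi:\xi<\kappa^+\rangle$, and set $D_\xi:=\bigcap_{\alpha<k(\kappa)}i_{G_\kappa}(k(f_\xi))(\alpha)$, restricting to those values that are dense open. Then play the game of length $\kappa^+$ in $M[G_\kappa]$, with Odd moving into $D_\xi$ at stage $2\xi+1$; each $D_\xi$ belongs to $N[G_\kappa]$ and is dense by the $\beta$-strategic closure. The filter generated by this play meets every dense set, giving $G_{\kappa,k(\kappa)+1}$.

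The delicate points I would verify carefully are the following. First, the strategic closure of the tail must exceed $k(\kappa)$ so that the intersection of $k(\kappa)$ dense sets is dense. This holds because the tail starts at the least $N$-measurable above $\kappa$, which is $>\lambda$ but not necessarily above $k(\kappa)$. If it fails, I would refine by also using the representation of ordinals below $k(\kappa)$ to reduce to $\kappa$-many intersections, as in Lemma \ref{lem:width_generic}, but with $f$ ranging over the $\kappa^+$ functions rather than being a single one. Second, the strategy sequence must be in $N[G_\kappa]$ at limit stages, which the $\kappa$-closure gives.
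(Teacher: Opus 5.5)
\textbf{There is a genuine gap: the count.} Your closure argument is correct and matches the paper. By (vi), the first nontrivial iterand of the tail sits at an $N$-measurable above $\kappa$, so $\mbb{P}$ is $(\kappa^{++})^N$-strategically closed in $N[G_\kappa]$. Then ${}^\kappa N[G_\kappa]\subseteq N[G_\kappa]$ in $M[G_\kappa]$ yields $\kappa^+$-strategic closure in $M[G_\kappa]$.

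Your bound of $\kappa^{++}$ dense sets in $M[G_\kappa]$ is correct but too coarse, and the proposed repair fails for three reasons.
\begin{itemize}
\item The count of functions is wrong. The number of $f\in M$ on $\kappa$ whose values are (names for) arbitrary subsets of a poset of size $\kappa^+$ is $(2^{\kappa^+})^\kappa=\kappa^{++}$. The figure $(\kappa^+)^\kappa$ only counts $\kappa$-sequences of subsets of size $\le\kappa$.
\item The intersection $\bigcap_{\alpha<k(\kappa)}k(f)(\alpha)$ need not be dense. That would require $k(\kappa)^+$-distributivity of $\mbb{P}$ in $N[G_\kappa]$. But $\mbb{P}$ has nontrivial iterands at $N$-measurables in $(\kappa,k(\kappa)]$, and (II) allows each to add a new subset of its index. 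Indeed $k(\kappa)$ is itself measurable in $N$. Since $\kappa$ is a limit of measurables in $M$, $k(\kappa)$ is also a limit of $N$-measurables.
\item Your use of (iii) is a misreading. Elementarity of $k$ turns (iii) into a statement about $N$-measurables above $k(\kappa)$ only. In fact the least $N$-measurable above $\kappa$ is below $k(\kappa)<\kappa^{++}$, hence below $\lambda$ once $\lambda\ge\kappa^{++}$.
\end{itemize}
The fallback to Lemma \ref{lem:width_generic} is not available either. That lemma pushes forward an already existing generic for a $\kappa^+$-strategically closed poset of $M[G_\kappa]$. Here $\mbb{P}$ is a quotient of $k(\mbb{P}_\kappa*\dot{\mbb{Q}}^M_\kappa)$ by $G_\kappa$, not the image of such a poset, so there is nothing to push forward.

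\textbf{The missing idea is to count maximal antichains instead of dense sets, using the chain condition.}
\begin{itemize}
\item In $N[G_\kappa]$, $\mbb{P}$ is $k(\kappa)^+$-c.c. and may be taken of size $\le k(\kappa)^+$, by (I), (II) and Fact \ref{fact:Easton_it_basics}.
\item By GCH in $N[G_\kappa]$, $\mbb{P}$ then has at most $(k(\kappa)^+)^{k(\kappa)}=k(\kappa)^+$ maximal antichains there.
\item By (v), $(k(\kappa)^+)^N<(\kappa^{++})^M$. So in $M[G_\kappa]$ this family can be enumerated as $\langle A_\xi:\xi<\kappa^+\rangle$.
\item Run a play of length $\kappa^+$ in which Even follows his strategy (it survives by the closure you established). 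At his $\xi$-th move, Odd extends below a member of $A_\xi$.
\item The generated filter meets every maximal antichain of $N[G_\kappa]$, hence every dense set there. It is the required $G_{\kappa,k(\kappa)+1}$.
\end{itemize}
This is exactly the paper's argument. No width argument is needed here; width enters only afterwards, in Claim 2.
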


\noindent
\emph{Proof of Claim 1}.
Let $\mcal{A}_0$ be the set of all maximal antichains of $\mbb{P}$ in $N[ G_\kappa ]$. By (I) and (II) of the proposition, in $N[ G_\kappa ]$, $\mbb{P}$ is $k( \kappa )^+$-c.c.~and we may assume $| \mbb{P} | \leq k( \kappa )^+$. So $| \mcal{A}_0 | \leq k( \kappa )^+$ in $N[ G_\kappa ]$. And by (v), it follows that  $| \mcal{A}_0 | \leq \kappa^+$ in $M[ G_\kappa ]$.

By (vi), in $N$, the least measurable cardinal $\geq \kappa$ is greater than $\kappa^+$. Thus, by (I), (II) and Fact \ref{fact:Easton_it_basics} (2), $\mbb{P}$ is $\kappa^+$-strategically closed in $N[ G_\kappa ]$. Moreover, by (iv) and the $\kappa$-c.c.~of $\mbb{P}_\kappa$, we have  ${}^\kappa N[ G_\kappa ] \subseteq N[ G_\kappa ]$ in $M[ G_\kappa ]$. Hence $\mbb{P}$ is $\kappa^+$-strategically closed also in $M[ G_\kappa ]$. Then in $M[ G_\kappa ]$, since $| \mcal{A}_0 | \leq \kappa^+$, we can take a $\mbb{P}$-generic filter $G_{\kappa , k( \kappa ) + 1}$ over $N[ G_\kappa ]$.
\hfill $\square$(Claim 1)

\medskip
Let $G_{k( \kappa ) + 1} := G_\kappa * G_{\kappa , k( \kappa ) + 1}$. Let $G_{\kappa , k( \kappa )}$ be an $i_{G_\kappa} ( \dot{\mbb{P}}_{\kappa , k( \kappa )}^N )$-generic filter over $N[ G_\kappa ]$ obtained by projecting $G_{k( \kappa ) + 1}$, and let $G_{k( \kappa )} := G_\kappa * G_{\kappa , k( \kappa )}$. Also, let $H_{k( \kappa )}$ be an $i_{G_{k( \kappa )}} ( \dot{\mbb{Q}}_{k( \kappa )}^N )$-generic filter over $N[ G_{k( \kappa )} ]$ naturally obtained from $G_{k( \kappa ) + 1}$.

Let $\mbb{R}$ be the interpretation of $\dot{\mbb{P}}_{k( \kappa ) + 1 , l( \kappa )}^N$ by $G_{k( \kappa ) + 1}$. We claim the following.

\begin{claim}
In $V[ G_\kappa ]$, there is an $\mbb{R}$-generic filter $G_{k( \kappa ) + 1 , l( \kappa )}$ over $N[ G_{k( \kappa ) + 1} ]$.
\end{claim}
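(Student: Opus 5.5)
The plan follows Magidor's argument in \cite{C} \S 22: build a generic for the term forcing in $V[G_\kappa]$ by counting dense sets, then pull it back to the actual tail through $k$ and Lemma \ref{lem:term_basic}.

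\textbf{Step 1 (term forcing in $M$).} Work in $M$. Let $\dot{\mbb{R}}^*$ be the $\mbb{P}^N_{k(\kappa)+1}$-name for the tail $\dot{\mbb{P}}^N_{k(\kappa)+1,l(\kappa)}$. Everything in $N=\mrm{Ult}(M,U_1)$ is $k$ applied to something in $M$. So $\mbb{P}^N_{k(\kappa)+1}=k(\mbb{P}^M_{\kappa+1})$ and $\dot{\mbb{R}}^*=k(\dot{\mbb{P}}^M_{\kappa+1,j(\kappa)})$, using the factorization (ix). Let $\mbb{A}:=\mbb{A}(\mbb{P}^M_{\kappa+1},\dot{\mbb{P}}^M_{\kappa+1,j(\kappa)})$, computed in $M$. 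By (iii), the least measurable of $M$ above $\kappa$ exceeds $\lambda$, so Fact \ref{fact:Easton_it_basics}(2) shows the tail is forced to be $\lambda^+$-strategically closed. Lemma \ref{lem:term_basic2} then makes $\mbb{A}$ $\lambda^+$-strategically closed in $M$. By (i), ${}^\lambda M\subseteq M$, so $\mbb{A}$ is also $\lambda^+$-strategically closed in $V$.

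\textbf{Step 2 (generic over $M$ in $V$).} Count the dense subsets of $\mbb{A}$ lying in $M$. By GCH and (ii), $|\mbb{A}|^M\le j(\kappa)^+$ and $|\mcal{P}(\mbb{A})^M|\le j(\kappa)^{++}$ computed in $M$. Since $j(\lambda)<(\lambda^{++})^V$, there are at most $\lambda^+$ many such dense sets in $V$. Meet them one at a time along a run of length $\lambda^+$ in which Even plays her winning strategy (moves of $\lambda^+$-strategic closure). This yields $I\in V$, an $\mbb{A}$-generic filter over $M$. Since $\mbb{P}_\kappa$ is $\kappa$-c.c. and $\mbb{A}$ is ${<}\lambda^+$-distributive in $V$, I expect $I$ to stay generic over $M$ from the viewpoint of $V[G_\kappa]$. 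Actually only $I\in V[G_\kappa]$ and genericity over $M$ are needed, and genericity over $M$ is absolute.

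\textbf{Step 3 (transfer through $k$; the main obstacle).} Let $I^*$ be the filter generated by $k[I]$. The embedding $k:M\to N$ has width $\le\kappa$ by (vii), and $\mbb{A}$ is $\kappa^+$-strategically closed in $M$. So Lemma \ref{lem:width_generic} makes $I^*$ a $k(\mbb{A})$-generic filter over $N$. By elementarity, $k(\mbb{A})=\mbb{A}(\mbb{P}^N_{k(\kappa)+1},\dot{\mbb{R}}^*)^N$.

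Next, $G_{k(\kappa)+1}$ is $\mbb{P}^N_{k(\kappa)+1}$-generic over $N$, by Claim 1 and the construction. We need $I^*$ and $G_{k(\kappa)+1}$ to combine generically. Lemma \ref{lem:term_basic} as stated needs $I^*$ generic over $N$ only, not over $N[G_{k(\kappa)+1}]$. Rereading its proof, genericity over $M$ suffices, because $D^*\in M$. So set
\[
G_{k(\kappa)+1,l(\kappa)}:=\{i_{G_{k(\kappa)+1}}(\dot r):\dot r\in I^*\}.
\]
Lemma \ref{lem:term_basic} applied with $N$ in place of $M$ shows this is $\mbb{R}$-generic over $N[G_{k(\kappa)+1}]$.

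Finally, the filter lies in $V[G_\kappa]$. Indeed $I\in V$, $k\in M\subseteq V$ so $I^*\in V$, and $G_{k(\kappa)+1}\in M[G_\kappa]\subseteq V[G_\kappa]$ by Claim 1.

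The delicate points are these. First, the counting in Step 2 must be in $V$, with exactly $\lambda^+$ dense sets; this needs GCH and $j(\lambda)<\lambda^{++}$ from (ii). Second, the closure of $\mbb{A}$ must be verified in $V$, not merely in $M$, which uses (i).
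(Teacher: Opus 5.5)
Your argument is correct, but it is organized differently from the paper's.

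\textbf{The paper's route.} It forms the term forcing $\mbb{A}^M=\mbb{A}(\mbb{Q}^M,\dot{\mbb{R}}^M)$ inside $M[G_\kappa]$, over the single stage $\mbb{Q}^M$. It builds the generic $I^M$ in $V[G_\kappa]$ using ${}^\lambda M[G_\kappa]\subseteq M[G_\kappa]$. It then transfers $I^M$ along the \emph{lifted} embedding $k^*:M[G_\kappa]\to N[G_{k(\kappa)}]$. This needs $k[G_\kappa]\subseteq G_{k(\kappa)}$ and the remark that natural extensions keep width $\le\kappa$. Finally it combines $I^N$ with the stage-$k(\kappa)$ generic $H_{k(\kappa)}$ over $N[G_{k(\kappa)}]$.

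\textbf{Your route.} You take the term forcing in the ground model $M$ over all of $\mbb{P}^M_{\kappa+1}$, and you build $I$ already in $V$ from ${}^\lambda M\subseteq M$. You push $I$ to $N$ with the unlifted $k$. You then apply Lemma \ref{lem:term_basic} once, over $N$, with the whole generic $G_{k(\kappa)+1}$. This is legitimate because, as you observe, that lemma only needs $I^*$ to be generic over $N$, not mutually generic with $G_{k(\kappa)+1}$.

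\textbf{Comparison.} Your route is more economical: there is no lifting of $k$ and no width bookkeeping for $k^*$. It also makes visible that the term-forcing generic does not depend on $G_\kappa$ at all. The paper's route keeps all the work inside $M[G_\kappa]$ and $N[G_{k(\kappa)}]$, parallel to Claim 1, at the cost of that extra lifting step.

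\textbf{Two small repairs.}
\begin{enumerate}
\item ``Everything in $N$ is $k$ applied to something in $M$'' is false, e.g.\ $\kappa\notin\mathrm{ran}(k)$. You only use elementarity ($k(\mbb{P}^M_{\kappa+1})=\mbb{P}^N_{k(\kappa)+1}$, etc.), which is fine.
\item Your count of dense sets does not quite close when $\lambda=\kappa^+$. In that case $j(\lambda)=(j(\kappa)^+)^M<(j(\kappa)^{++})^M$, so ``$j(\lambda)<\lambda^{++}$'' does not bound $(j(\kappa)^{++})^M$. There are two easy fixes:
\begin{itemize}
\item Note that $\mbb{A}$ has a dense subset of size $\le j(\kappa)$ in $M$, as the paper does for $\mbb{A}^M$; mix names over maximal antichains of the $\kappa^+$-c.c.\ poset $\mbb{P}^M_{\kappa+1}$. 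Then only $(j(\kappa)^+)^M\le j(\lambda)$ many dense sets must be met.
\item Alternatively, use $(j(\kappa)^{++})^M\le j(\lambda^+)$ together with $|j(\lambda^+)|^V=\lambda^+$ under GCH.
\end{itemize}
\end{enumerate}
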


\noindent
\emph{Proof of Claim 2}. Let $\mbb{S}^M$ be the interpretation of $\dot{\mbb{P}}_{\kappa , j( \kappa )}^M$ by $G_\kappa$. Thus, $\mbb{S}^M$ corresponds to the last two factors of the factorization (ix). In $M[ G_\kappa ]$, $\mbb{S}^M$ factors as $\mbb{Q}^M * \dot{\mbb{R}}^M$, where $\mbb{Q}^M = i_{G_\kappa} ( \dot{\mbb{Q}}_\kappa^M )$, and $\dot{\mbb{R}}^M$ corresponds to $\dot{\mbb{P}}_{\kappa + 1 , j( \kappa )}^M$. Let $\mbb{A}^M$ be $\mbb{A} ( \mbb{Q}^M , \dot{\mbb{R}}^M )$ in $M[ G_\kappa ]$. The following subclaim is proved by an argument similar to that of Claim 1.

\begin{subclaim*}
In $V[ G_\kappa ]$, there is an $\mbb{A}^M$-generic filter $I^M$ over $M[ G_\kappa ]$.
\end{subclaim*}

\noindent
\emph{Proof of Subclaim}.
In $M[ G_\kappa ]$, there is a dense sub-poset $\mbb{A}'$ of $\mbb{A}^M$ of size $\leq j( \kappa )$, since $| \mbb{Q}^M | < j( \kappa )$ and $\mbb{Q}^M$ forces that $\dot{\mbb{R}}^M$ has a dense subset of size $\leq j( \kappa )$. Let $\mcal{D}_1$ be the set of all dense subsets of $\mbb{A}'$ in $M[ G_\kappa ]$. By GCH, $| \mcal{D}_1 | \leq j( \kappa )^+$ in $M[ G_\kappa ]$. Then, by (ii), it follows that  $| \mcal{D}_1 | \leq \lambda^+$ in $V[ G_\kappa ]$.

By (iii), in $M[ G_\kappa ]$, $\mbb{Q}^M$ forces $\dot{\mbb{R}}^M$ to be $\lambda^+$-strategically closed. Then, by Lemma \ref{lem:term_basic2}, $\mbb{A}^M$ is $\lambda^+$-strategically closed in $M[ G_\kappa ]$. Thus so is $\mbb{A}'$. Moreover,  by (i) and the $\kappa$-c.c.~of $\mbb{P}_\kappa$, we have ${}^\lambda M[ G_\kappa ] \subseteq M[ G_\kappa ]$ in $V[ G_\kappa ]$. Therefore $\mbb{A}'$ is $\lambda^+$-strategically closed in $V[ G_\kappa ]$. Then in $V[ G_\kappa ]$, since $| \mcal{D}_1 | \leq \lambda^+$, there is an $\mbb{A}'$-generic filter over $M[ G_\kappa ]$. This filter generates an $\mbb{A}^M$-generic filter $I^M$ over $M[ G_\kappa ]$.
\hfill $\square$(Subclaim)

\medskip
Next, let $\mbb{S}^N$ be the interpretation of $\dot{\mbb{P}}_{k( \kappa ) , l( \kappa )}^N$ by $G_{k( \kappa )}$, corresponding to the last two factors in (x). In $N[ G_{k( \kappa )} ]$, $\mbb{S}^N$ factors as $\mbb{Q}^N * \dot{\mbb{R}}^N$, where
\begin{itemize}
\item[(xi)] $\mbb{Q}^N = i_{G_{k( \kappa )}} ( \dot{\mbb{Q}}_\kappa^N )$, and $i_{H_{k( \kappa )}} ( \dot{\mbb{R}}^N ) = \mbb{R}$.
\end{itemize}
Here recall that $H_{k( \kappa )}$ is an $i_{G_{k( \kappa )}} ( \dot{\mbb{Q}}_\kappa^N )$-generic filter over $N[ G_{k( \kappa )} ]$. Let $\mbb{A}^N$ be $\mbb{A} ( \mbb{Q}^N , \dot{\mbb{R}}^N )$ in $N[ G_{k( \kappa )} ]$.

Note that $k[ G_\kappa ] \subseteq G_{k( \kappa )}$. Let $k^* : M[ G_\kappa ] \to N[ G_{k( \kappa )} ]$ be the natural extension of $k$. Then $k^* ( \mbb{A}^M ) = \mbb{A}^N$, and $k^*$ has width $\leq \kappa$ by (vii) and the remark immediately preceding the proof of the proposition. Hence, by Lemma \ref{lem:width_generic}, $k^* [ I^M ]$ generates an $\mbb{A}^N$-generic filter $I^N$ over $N[ G_{k( \kappa )} ]$. By (xi) and Lemma \ref{lem:term_basic}, in $V[ G_\kappa ]$, we obtain an $\mbb{R}$-generic filter $G_{k( \kappa ) + 1 , l( \kappa )}$ over $N[ G_{k( \kappa )} ][ H_{k( \kappa )} ] = N[ G_{k( \kappa ) + 1} ]$ from $H_{k( \kappa )}$ and $I^N$.
\hfill $\square$(Claim 2)

\medskip
In $V[ G_\kappa ]$, let $G_{l( \kappa )} := G_{k( \kappa ) + 1} * G_{k( \kappa) + 1 , l( \kappa )}$. Then $G_{l( \kappa )}$ is a $\mbb{P}_{l( \kappa )}^N$-generic filter over $N$, and $l[ G_\kappa ] \subseteq G_{l( \kappa )}$. Let $l^* : V[ G_\kappa ] \to N[ G_{l( \kappa )} ]$ be the natural extension of $l$. By (viii), $l^*$ is a $( \kappa , \lambda )$-strongly compact embedding. Therefore, $\kappa$ is $\lambda$-strongly compact in $V[ G_\kappa ]$.
\end{proof}

%--------------------------------------------------------------------------------------------------------------------------------------------------------------------------------------------------------------------------------------------------------
\subsection{Proof of Main Theorem} \label{sec:main}
%--------------------------------------------------------------------------------------------------------------------------------------------------------------------------------------------------------------------------------------------------------

We now have all the ingredients needed for the proof of the main theorem.

\setcounter{claim}{0}

\begin{proof}[Proof of Main Theorem]
Our forcing construction consists of two major steps. The first step is the forcing of Fact \ref{fact:Apter_Cummings}, due to Apter \& Cummings \cite{AC}. Let $V_1$ denote the corresponding forcing extension of the ground model $V$.

In $V_1$, GCH holds, and $\kappa$ is supercompact. In $V_1$, define
\begin{eqnarray*}
B & := & \{ \alpha < \kappa : \mbox{$\alpha$ is measurable} \} , \\
E & := & \textstyle \ltpt ( \kappa^+ ) \setminus \bigcup_{\alpha \in B} E^{\kappa^+}_{\alpha^+} .
\end{eqnarray*}
Then $\square_\kappa^E$ holds in $V_1$. Recall that there is no measurable cardinal $> \kappa$ in $V$, and that the forcing of Fact \ref{fact:Apter_Cummings} has size $2^\kappa$. Hence,  there is no measurable cardinal $> \kappa$ also in $V_1$ by the well-known fact, due to Levy \& Solovay \cite{LS}, that small forcings create no new measurable cardinals.

The second stage is an Easton support iteration of the forcings $\mbb{S}_\alpha$ from Fact \ref{fact:Shioya}, due to Shioya \cite{Sh}, for $\alpha \in B$. In $V_1$, let $( \langle \mbb{P}_{\alpha} : \alpha \leq \kappa \rangle , \langle \dot{\mbb{Q}}_{\alpha} : \alpha < \kappa \rangle )$ be the Easton support forcing iteration such that
\begin{itemize}
\item if $\alpha \in B$, then $\Vdash_\alpha ``\, \dot{\mbb{Q}}_\alpha = \mbb{S}_\alpha \,"$,
\item if $\alpha \notin B$, then $\Vdash_\alpha ``\, \mbox{$\dot{\mbb{Q}}_\alpha$ is trivial} \,"$.
\end{itemize}
Here note that if $\alpha \in B$, then $\Vdash_\alpha ``\,\mbox{$\alpha$ is inaccessible}\,"$ by Fact \ref{fact:Easton_it_basics}. Hence $\mbb{S}_\alpha$ is defined in $V_1^{\mbb{P}_\alpha}$.

Suppose $G_\kappa$ is a $\mbb{P}_\kappa$-generic filter over $V_1$, and let $V_2 := V_1 [G_{\kappa}]$. We show that $V_2$ is the desired forcing extension of $V$. In what follows, we work in $V_2$.

Note that $\kappa$ is strongly compact by Proposition \ref{prop:pres_strcmpct}. Therefore, by Proposition \ref{prop:downward_Menas_2stat}, it suffices to prove that $\mcal{P}_\kappa \kappa^+$ is not $2$-stationary. For this, it is enough to prove that $S := S( \kappa , \kappa^+ )$ is a non-reflecting $1$-stationary set.

\begin{claim}
$S$ is $1$-stationary.
\end{claim}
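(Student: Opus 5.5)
We need to show that in $V_2$ the set $S = S(\kappa,\kappa^+)$ is $1$-stationary. By Fact \ref{fact:1-stat} (2), since $\kappa$ is Mahlo in $V_2$, this is the same as showing that $S$ is strongly stationary. So we fix a $\mbb{P}_\kappa$-name $\dot h$ for a function $\mcal{P}_\kappa\kappa^+\to\mcal{P}_\kappa\kappa^+$ and a condition $p\in G_\kappa$. We then look for an extension of $p$ and an $x\in S$ with $x\cap\kappa$ inaccessible and $\mcal{P}_{x\cap\kappa}x$ closed under $h$. The approach is the usual lifting argument, using the supercompactness of $\kappa$ in $V_1$.

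\textbf{Step 1: the embedding.} In $V_1$, let $U$ be a normal $\kappa$-ultrafilter over $\mcal{P}_\kappa\kappa^+$, and let $j:V_1\to M$ be its ultrapower map, so ${}^{\kappa^+}M\subseteq M$. Then $j(\mbb{P}_\kappa)\sim \mbb{P}_\kappa*\dot{\mbb{Q}}_\kappa*\dot{\mbb{P}}_{\kappa+1,j(\kappa)}$, and $\kappa\in j(B)$ since $\kappa$ is measurable in $M$. The second factor is Shioya's forcing $\mbb{S}_\kappa$, which is $\kappa$-strategically closed and $\kappa^+$-c.c. The tail $\dot{\mbb{P}}_{\kappa+1,j(\kappa)}$ is forced to be $\kappa^{++}$-strategically closed in $M$, because $M$ has no measurables in $(\kappa,\kappa^{+}]$. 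I would not build a master condition for $\mbb{S}_\kappa$ itself, since it kills exactly the stationarity we need. Instead, I would pass to $V_2[H]$, where $H$ is $\mbb{S}_\kappa$-generic over $V_2$. There $G_\kappa*H$ is $j(\mbb{P}_\kappa)\restriction(\kappa+1)$-generic over $M$. The tail $\mbb{R}$ can then be handled by building a generic over $M[G_\kappa*H]$ inside $V_2[H]$, or inside a further $\kappa^{+}$-closed extension. This uses closure of $M[G_\kappa*H]$ under $\kappa^+$-sequences, a count of the antichains of $\mbb{R}$ (at most $\kappa^{++}$ of them in $V$ by GCH), and the strategic closure; alternatively one forces with $\mbb{R}$ over $V_2[H]$. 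Either way we get $j^*:V_2\to M[j(G_\kappa)]$ defined in some extension $W$ of $V_2$ by a forcing of the form $\mbb{S}_\kappa*(\text{something }\kappa^{+}\text{-strategically closed})$.

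\textbf{Step 2: the reflection argument.} In $W$, let $x:=j[\kappa^+]$. It lies in $M[j(G_\kappa)]$ by closure, and $x\cap j(\kappa)=\kappa$, which is inaccessible in $M[j(G_\kappa)]$. Also $\otp(x)=\kappa^+$, and we need $\kappa^+=(\kappa^+)^{M[j(G_\kappa)]}$. This holds because the added forcing preserves $\kappa^+$, by the $\kappa^+$-c.c.\ of $\mbb{S}_\kappa$ together with the closure of the rest. Hence $x\in j(S)$. For closure under $j^*(h)$: given $w\in\mcal{P}_\kappa x$, write $w=j[a]=j(a)$ with $a\in\mcal{P}_\kappa\kappa^+$. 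Here $a\in V_2$, because the added forcing $\mbb{S}_\kappa*\dots$ is $\kappa$-strategically closed and so adds no new $<\kappa$-sequences. Then $j^*(h)(w)=j(h(a))=j[h(a)]\in\mcal{P}_\kappa x$. So $M[j(G_\kappa)]$ satisfies that some $x\in j(S)$ has $x\cap j(\kappa)$ inaccessible and $\mcal{P}_{x\cap j(\kappa)}x$ closed under $j(h)$.

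\textbf{Step 3: pulling back, and the main obstacle.} To transfer this back to $V_2$, I would use the fact that the statement is evaluated in $M[j(G_\kappa)]$ about $j(h)$ and $j(S)$. The generic $j(G_\kappa)$ depends on $H$ and the tail generic, so a density argument is needed: for every condition in the quotient forcing, $j^*$ could be arranged below it. Hence, by elementarity, the set of $y\in S$ with $\mcal{P}_{y\cap\kappa}y$ closed under $h$ is nonempty in $V_2$. More carefully: if some $p\in G_\kappa$ forced in $V_1$ that no such $y$ exists, then $j(p)=p$ would force the same over $M$ for $j(\dot h),j(\dot S)$, which contradicts Step 2 since $p\in j(G_\kappa)$.

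The main obstacle is ensuring that the extra forcing $\mbb{S}_\kappa$ at stage $\kappa$ does not destroy $\kappa^+$ or make $j[\kappa^+]$ look non-$S$-like. In particular, we need $(\kappa^+)^{V_2}$ to remain the successor of $\kappa$ in $M[j(G_\kappa)]$, and $\mbb{S}_\kappa$ adds no bounded subsets of $\kappa$, so that $\mcal{P}_\kappa\kappa^+$ is unchanged. Both follow from the chain condition and strategic closure in Fact \ref{fact:Shioya}.
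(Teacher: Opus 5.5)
Your proposal is correct, but it takes a different route from the paper. You lift the $\kappa^+$-supercompactness embedding $j:V_1\to M$ through $\mbb{P}_\kappa$. To do this you additionally force with the stage-$\kappa$ factor $\mbb{S}_\kappa$ of $j(\mbb{P}_\kappa)$ and build the tail generic using the $\kappa^+$-closure of $M[G_\kappa*H]$. You then take $j[\kappa^+]$ as the reflecting point in $M[j(G_\kappa)]$. The transfer back to $V_2$ is just elementarity of $j^*$. Your ``more carefully'' sentence is exactly the proof of that elementarity, so no density argument over the quotient is needed.

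The paper never lifts an embedding. Given $f$ in $V_2$, it uses the $\kappa$-c.c.\ of $\mbb{P}_\kappa$ to cover the monotone version of $f$ by some $f^*\in V_1$. It then picks $x\in S(\kappa,\kappa^+)^{V_1}$ with $(\mcal{P}_\mu x)^{V_1}$ closed under $f^*$, using strong stationarity in $V_1$. Finally it checks that $x$ still works in $V_2$: any $z\in\mcal{P}_\mu x$ with $|z|=\alpha$ already lies in $V_1[G_{\alpha+1}]$, which is a $\mu$-c.c.\ extension. So $z$ is covered by some $y\in(\mcal{P}_\mu x)^{V_1}$, and $f(z)\subseteq f^*(y)$.

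The paper's route is more elementary and more robust. It uses only the chain conditions of $\mbb{P}_\kappa$ and of its small initial segments, plus the distributivity of the tails. Hence it shows that every strongly stationary subset of $\mcal{P}_\kappa\kappa^+$ from $V_1$ stays strongly stationary, for any Easton support iteration of the kind considered. It never has to examine stage $\kappa$ of $j(\mbb{P}_\kappa)$.

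Your route needs specific information about that stage:
\begin{itemize}
\item $\mbb{S}_\kappa$ must be $\kappa^+$-c.c., so that $\otp(j[\kappa^+])=(\kappa^+)^{M[j(G_\kappa)]}$.
\item $\mbb{S}_\kappa$ must be $\kappa$-strategically closed, so that $j^{-1}[w]\in V_2$.
\item The poset you force with over $V_2$ must be $\mbb{S}_\kappa$ as computed in $M[G_\kappa]$. You use this implicitly and should say so; it is fine because ${}^{\kappa^+}M[G_\kappa]\subseteq M[G_\kappa]$ in $V_2$.
\end{itemize}
You also need the counting and closure bookkeeping for the tail. In return, your argument is the standard lifting template, and it gives the same conclusion for every $A\in V_2$ with $j[\kappa^+]\in j^*(A)$, not just for $S$.
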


\noindent
\emph{Proof of Claim 1}. By Fact \ref{fact:1-stat} (2), it is enough to prove that $S$ is strongly stationary. Take an arbitrary function $f : \mcal{P}_\kappa \kappa^+ \to \mcal{P}_\kappa \kappa^+$. We find $x \in S$ such that $x \cap \kappa$ is inaccessible, and $f[ \mcal{P}_{x \cap \kappa} x] \subseteq \mcal{P}_{x \cap \kappa} x$.

Let $W := ( \mcal{P}_\kappa \kappa^+ )^{V_1}$. Define $f' : W \to \mcal{P}_\kappa \kappa^+$ by $f' (y) := \bigcup_{z \subseteq y} f(z)$. Since $V_2$ is a $\kappa$-c.c.~forcing extension of $V_1$, there is a function $f^* : W \to W$ in $V_1$ such that $f' (y) \subseteq f^* (y)$ for all $y \in W$. Thus, whenever $z \subseteq y \in W$, we have $f(z) \subseteq f^* (y)$.

Since $\kappa$ is supercompact in $V_1$, $T := S( \kappa , \kappa^+ )^{V_1}$ is strongly stationary in $V_1$. Hence we can take $x \in T$ such that $\mu := x \cap \kappa$ is inaccessible in $V_1$, and $f^* [ ( \mcal{P}_\mu x )^{V_1} ] \subseteq ( \mcal{P}_\mu x )^{V_1}$. We show that $x$ is as desired.

Note that $x \in S$ and $x \cap \kappa = \mu$ is inaccessible by Fact \ref{fact:Easton_it_basics}. We must show that $f[ \mcal{P}_\mu x ] \subseteq \mcal{P}_\mu x$. For this, take an arbitrary $z \in \mcal{P}_\mu x$. Let $\alpha := |z| < \mu$ and $G_{\alpha + 1}$ be the restriction of $G_\kappa$ to $\mbb{P}_{\alpha + 1}$. Then $z \in V_1 [ G_{\alpha + 1} ]$ since the tail of $\mbb{P}_\kappa$ after $\alpha + 1$ adds no new sequences of ordinals of length $\alpha$. But $V_1[ G_{\alpha + 1} ]$ is a $\mu$-c.c.~forcing extension of $V_1$. So there is $y \in ( \mcal{P}_\mu x )^{V_1}$ with $z \subseteq y$. Then $f(z) \subseteq f^* (y) \in ( \mcal{P}_\mu x )^{V_1}$. Hence $f(z) \in \mcal{P}_\mu x$.
\hfill $\square$(Claim 1)

\begin{claim}
$d_1 (S)$ is non-stationary in $\mcal{P}_\kappa \kappa^+$.
\end{claim}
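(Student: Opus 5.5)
We work in $V_2$ and show that $d_1(S)$ is contained, modulo a nonstationary set, in a union of sets each of which is nonstationary. Every $x \in d_1(S)$ has $\mu := x \cap \kappa$ weakly inaccessible. By Lemma \ref{lem:S_<_S_>_stat}, $S_>(\kappa,\kappa^+)$ is nonstationary. By Corollary \ref{cor:dS_S<}, $d_1(S) \cap S_<(\kappa,\kappa^+) = \emptyset$. Hence, modulo a nonstationary set, $d_1(S) \subseteq S$, and it suffices to show that $d_1(S) \cap S$ is nonstationary. We split $S$ according to whether $\sup x \in E$.

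\textbf{Case $\sup x \in E$.} The square sequence witnessing $\square^E_\kappa$ in $V_1$ remains a $\square^E_\kappa$-sequence in $V_2$. This is because $\kappa^+$ is preserved, $E$ is the same set of ordinals, and conditions (i)--(ii) of the definition are absolute. Therefore Lemma \ref{lem:psquare_S} applied in $V_2$ shows that $\{x \in S : \sup x \in E\}$ is nonstationary.

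\textbf{Case $\sup x \notin E$.} For a club of $x$, $\sup x$ is a limit ordinal, so $\sup x \in E^{\kappa^+}_{\alpha^+}$ for some $\alpha \in B$ (cofinalities are computed the same way in $V_1$ and $V_2$). For $x \in S$ with $\mu = x\cap\kappa$ a limit cardinal, we have $\otp(x) = \mu^+$. Restricting to the club of Lemma \ref{lem:gamma_closed}, $x$ is $\gamma$-closed for some regular $\gamma < \mu$, so $\cof(\sup x) = \mu^+$. Hence $\alpha = \mu$, that is, $x \cap \kappa \in B$ is measurable in $V_1$.

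We now show that such $x$ cannot lie in $d_1(S)$. Since $\mu \in B$, the iteration forces with Shioya's $\mbb{S}_\mu$ at stage $\mu$. So in $V_1[G_{\mu+1}]$, the set $S(\mu,\mu^+)$ is nonstationary in $\mcal{P}_\mu \mu^+$. The tail of the iteration is $\mu^+$-strategically closed by Fact \ref{fact:Easton_it_basics}(2), since the next point of $B$ is above $\mu^+$. Hence the tail adds no subsets of $\mcal{P}_\mu\mu^+$ (using GCH), and $S(\mu,\mu^+)$ remains nonstationary in $V_2$. Because $\otp(x) = \mu^+$, Remark \ref{rmk:base_set} applied to the collapse $x \to \mu^+$ transfers this to $\mcal{P}_\mu x$. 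The collapse sends $S \cap \mcal{P}_\mu x$ onto a subset of $S(\mu,\mu^+)$ modulo a club, since the collapse preserves $z \cap \mu$ and order types. Thus $S \cap \mcal{P}_\mu x$ is nonstationary, so by Fact \ref{fact:1-stat}(1) it is not $1$-stationary, and $x \notin d_1(S)$.

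Combining the two cases, $d_1(S)$ is nonstationary. The step I expect to need the most care is this transfer. One must check that the collapse $\pi : x \to \mu^+$ maps $S\cap\mcal{P}_\mu x$ into $S(\mu,\mu^+)$ up to a club. The relevant point is that $z\cap\kappa = z\cap\mu$ for $z\in\mcal{P}_\mu x$, and that $\pi$ fixes $\mu$ because $x\cap\kappa=\mu$ is an initial segment of $x$. One must also verify the absoluteness of nonstationarity through the tail forcing.
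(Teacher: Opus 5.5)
Your proposal is correct and follows essentially the same route as the paper: discard $S_<(\kappa,\kappa^+)$ by Corollary~\ref{cor:dS_S<} and $S_>(\kappa,\kappa^+)$ by Lemma~\ref{lem:S_<_S_>_stat}, kill $\{x\in S:\sup x\in E\}$ with $\square^E_\kappa$ and Lemma~\ref{lem:psquare_S}, and for the remaining $x\in S$ use $\mu=x\cap\kappa\in B$, Shioya's forcing at stage $\mu$, and the collapse $\mu^+\to x$ with Remark~\ref{rmk:base_set}. Three small points: $\cof(\sup x)=\mu^+$ follows directly from $\otp(x)=\mu^+$, so Lemma~\ref{lem:gamma_closed} is not needed; the tail after $\mu+1$ is in fact $\min(B\setminus(\mu+1))$-strategically closed, which is what justifies your ``no new subsets of $\mcal{P}_\mu\mu^+$'' (mere $\mu^+$-strategic closure would not, though $\mu$-strategic closure already keeps the witnessing club a club); and the collapse gives the exact equality $S\cap\mcal{P}_\mu x=\{\pi[y]:y\in S(\mu,\mu^+)\}$, not just an inclusion modulo a club.
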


\noindent
\emph{Proof of Claim 2}.
First note that $E = \ltpt ( \kappa^+ ) \setminus \bigcup_{\alpha \in B} E^{\kappa^+}_{\alpha^+}$ and that $\square_\kappa^E$ holds (in $V_2$), since $\mbb{P}_\kappa$ preserves all cofinalities by Fact \ref{fact:Easton_it_basics}. Let $S_E$ be the set of all $x \in S$ with $\sup x \in E$. Then, by Lemma \ref{lem:psquare_S}, $S_E$ is non-stationary in $\mcal{P}_\kappa \kappa^+$.

Then $T := S_< ( \kappa , \kappa^+ ) \cup ( S \setminus S_E )$ contains a club subset of $\mcal{P}_\kappa \kappa^+$ by Lemma \ref{lem:S_<_S_>_stat}. So it suffices to prove that $d_1 (S) \cap T = \emptyset$. But $d_1 (S) \cap S_< ( \kappa , \kappa^+ ) = \emptyset$ by Corollary \ref{cor:dS_S<}. Thus, we just have to  show that $d_1 (S) \cap ( S \setminus S_E ) = \emptyset$.

Suppose $x \in S \setminus S_E$, and let $\mu := x \cap \kappa$. By Fact \ref{fact:1-stat} (1), it suffices to prove that $S \cap \mcal{P}_\mu x$ is non-stationary. Since  $x \in S$ , we have  $\cof ( \sup x ) = \mu^+$. Moreover, since $x \notin S_E$, we have  $\sup x \in \bigcup_{\alpha \in B} E^{\kappa^+}_{\alpha^+}$. Thus, $\mu \in B$. Then,  by the construction of $\mbb{P}_\kappa$, we have that $S( \mu , \mu^+ )$ is non-stationary in $\mcal{P}_\mu \mu^+$.

Now, let $\pi : \mu^+ \to x$ be the inverse of the transitive collapse. Then $S \cap \mcal{P}_\mu x = \{ \pi [y] : y \in S( \mu , \mu^+ ) \}$. Therefore, from Remark \ref{rmk:base_set} it follows that $S \cap \mcal{P}_\mu x$ is non-stationary.
\hfill $\square$(Claim 2)

\medskip

By Claim 1, 2 and Corollary \ref{cor:d_1stat}, $\mcal{P}_\kappa \kappa^+$ is not $2$-stationary.
\end{proof}

%--------------------------------------------------------------------------------------------------------------------------------------------------------------------------------------------------------------------------------------------------------
\section*{Acknowledgments}
%--------------------------------------------------------------------------------------------------------------------------------------------------------------------------------------------------------------------------------------------------------

This research was carried out during the second author's long-term visit to the University of Tokyo in 2025. This research visit was supported by the project \emph{Lógica Matemática} PID2023-147428NB-I00, led by Joan Bagaria. Part of the revision and writing of the paper was carried out during the first half of 2026, while the second author was supported by an Ernst Mach grant of OeAD MPC-2025-00829 and  by the Austrian Science Fund (FWF), through Juan P. Aguilera’s project, grants 10.55776/P36837 and 10.55776/STA139. The authors would like to express their sincere gratitude to Joan Bagaria and Juan Aguilera for their kind support to this research. The first author's research was also supported by JSPS KAKENHI Grant Number 24K06828.

%--------------------------------------------------------------------------------------------------------------------------------------------------------------------------------------------------------------------------------------------------------

\bibliographystyle{amsplain}

%--------------------------------------------------------------------------------------------------------------------------------------------------------------------------------------------------------------------------------------------------------

\end{document}